\newtheorem{theorem}{Theorem}
\newtheorem{lemma}[theorem]{Lemma}
\newtheorem{proposition}[theorem]{Proposition}
\newtheorem{definition}[theorem]{Definition}
\newtheorem{corollary}[theorem]{Corollary}
\newtheorem*{theorema}{Theorem A}
\newtheorem*{theoremb}{Theorem B}
\newtheorem*{hypothesis*}{Hypothesis}
\newtheorem*{lemmaa}{Lemma A}
\newtheorem{hypothesis}{Hypothesis}
\newtheorem*{ihypothesis}{Inductive Hypothesis}
\newcommand{\Galph}{G_\alpha}
\newcommand{\spaceP}{\mathcal{P}}
\newcommand{\Fix}{{\rm Fix}}
\begin{document}

%%%%%%%%%%%%%%%%%%%%%%%%%%%TITLE, THANKS, CONTENTS - tilepage.tex as for WUTHRICH

\pagestyle{plain}

\pagenumbering{arabic}
%%%%%%%%%%%%%%%%%%%%%%%%%FINALLY. THE ACTUAL MATHS.
\title{Transitive projective planes and $2$-rank}
\author{Nick Gill}
\maketitle

\begin{abstract}
Suppose that a group $G$ acts transitively on the points of a non-Desarguesian plane, $\spaceP$. We prove first that the Sylow $2$-subgroups of $G$ are cyclic or generalized quaternion. We also prove that $\spaceP$ must admit an odd order automorphism group which acts transitively on the set of points of $\spaceP$. \footnote{Comments are welcome and should be sent to: {\tt nickgill@cantab.net}. I would also like to thank Prof Cheryl Praeger for her excellent advice. In particular any virtues in the exposition of this paper are largely due to her.}

{\it MSC(2000):} 20B25, 51A35.
\end{abstract}

\section{Introduction}

In 1959 Ostrom and Wagner proved that if a finite projective plane, $\spaceP$, admits a automorphism group which acts 2-transitively on the set of points of $\spaceP$ then $\spaceP$ is Desarguesian \cite{oswa}. It has long been conjectured that the same conclusion holds if the phrase {\it 2-transitively} is replaced by {\it transitively}.

A number of results have appeared which partially prove this conjecture under certain extra conditions. Most notably, in 1987 Kantor proved that if $\spaceP$ has order $x$ and $\spaceP$ admits a group $G$ which acts {\it primitively} on the set of points of $\spaceP$ then either $\spaceP$ is Desarguesian and $G\geq PSL(3,x)$, or else $x^2+x+1$ is a prime and $G$ is a regular or Frobenius group of order dividing $(x^2+x+1)(x+1)$ or $(x^2+x+1)x$ \cite{kantor}.

The results we present in this paper constitute a further advance towards a proof of the conjecture. In particular we prove two main theorems.

\begin{theorema}
Suppose that a group $G$ acts transitively on the set of points of $\spaceP,$ a non-Desarguesian plane. Then $m_2(G)\leq 1$.
\end{theorema}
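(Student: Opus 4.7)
The plan is to argue by contradiction: suppose $m_2(G)\ge 2$, so $G$ contains a Klein four-subgroup $V=\langle\sigma,\tau\rangle\cong\Z/2\Z\times\Z/2\Z$. Write $n$ for the order of $\spaceP$; point-transitivity forces $n^2+n+1$ to divide $|G|$. The classical starting point is Baer's theorem: each involution of $V$ is either a \emph{perspectivity} (fixing pointwise a line, together with a single off-axis point --- an elation if center and axis are incident, a homology otherwise) or a \emph{Baer involution} (fixing pointwise a subplane of order $\sqrt{n}$, which in particular forces $n$ to be a square). I would split the analysis into three cases according to how many of the three non-identity elements of $V$ are perspectivities versus Baer involutions.

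In the all-perspectivity case, two commuting perspectivities must share an axis, a center, or both, so $V$ becomes a four-group of central or axial collineations; here the plan is to invoke the classical machinery on groups of perspectivities (the Baer--Hering theory together with the Lenz--Barlotti classification) to conclude that $\spaceP$ must be Desarguesian, contradicting the hypothesis. In the mixed and all-Baer cases I would exploit the fact that an involution $\sigma\in V$ centralising another involution $\tau\in V$ forces $\tau$ to stabilise $\Fix(\sigma)$: if $\sigma$ is Baer then $\tau|_{\Fix(\sigma)}$ is itself, by a second application of Baer's theorem to the smaller plane $\Fix(\sigma)$ of order $\sqrt n$, either a perspectivity or a Baer involution, and iterating yields strong arithmetic constraints on $n$ (for example $n$ must be a fourth power in the all-Baer subcase with $\tau|_{\Fix(\sigma)}$ Baer). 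These geometric constraints can then be combined with the Burnside-style orbit-counting identity
\[
|\Fix(\sigma)|+|\Fix(\tau)|+|\Fix(\sigma\tau)|=2|\Fix(V)|+(n^2+n+1)-4R,
\]
where $R$ is the number of regular $V$-orbits on the point set, and with the explicit fixed-point counts $|\Fix(x)|=n+1$ or $n+2$ (perspectivity, elation or homology respectively) and $|\Fix(x)|=n+\sqrt n+1$ (Baer involution), to extract a contradiction, typically modulo a small prime determined by $n\bmod 4$.

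The main obstacle will be the all-perspectivity case: ruling out a four-group of central collineations acting on a non-Desarguesian transitive plane requires invoking the deepest classical structure theorems on collineation groups, and the precise form of the contradiction depends on whether $V$ consists of elations, homologies, or a mixture. The all-Baer case is more geometric and should follow by careful iteration of Baer's theorem, but it is technically delicate because nested Baer subplanes are geometrically consistent --- the contradiction must be extracted from the global transitivity hypothesis on $G$, together with the orbit identity above, rather than from any purely local incidence obstruction.
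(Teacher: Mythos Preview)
Your case split is not the issue; the issue is that you have the difficulty of the cases exactly backwards. In a point-transitive non-Desarguesian plane the involutions are \emph{all} Baer: this is classical (Wagner, together with the fixed-point analysis recorded in Dembowski), and it is built into the paper's standing Hypothesis~\ref{h:basic}. So the all-perspectivity and mixed cases, which you flag as the ``main obstacle'', never occur. The entire content of Theorem~A is the all-Baer case.

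For that case your plan --- iterate Baer's theorem inside $\Fix(\sigma)$ and combine with the Burnside orbit identity for $V$ --- does not carry enough information to reach a contradiction. Your displayed identity, specialised to three Baer involutions fixing $u^2+u+1$ points each in a plane of order $u^2$, is perfectly consistent (Klein four-groups of Baer involutions exist already in Desarguesian planes), so no local arithmetic on a single four-group $V$ can possibly contradict the non-Desarguesian hypothesis. You say yourself that the contradiction ``must be extracted from the global transitivity hypothesis on $G$'', but you give no mechanism for doing so; that mechanism is the whole proof. What the paper actually does is use the global structure heavily: from earlier work one has $F^*(G)=F(G)$ acting semi-regularly, and if $m_2(G)>1$ then every prime dividing $|F(G)|$ divides $u^2+u+1$. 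One then passes to $G/K$ inside a product $GL(V_1)\times\cdots\times GL(V_r)$ of general linear groups over the primes $p_i\mid u^2+u+1$, and the heart of the argument is Lemma~A, an Aschbacher-classification computation showing that every even-order subgroup of $GL_n(q)$ (with $q=p^a$, $p\equiv 1\pmod 3$, $p\ge 7$) contains an involution whose $p'$-conjugacy-class size, restricted to the relevant primes, is at most $q^{n-1}+\cdots+q+1$. This bound is then played off against the exact divisibility $u^2-u+1\mid |g^G|$ coming from Lemma~\ref{L: counting} to force a contradiction. None of this structure is visible from your orbit-counting identity; the proof is representation-theoretic and relies on the Aschbacher families, not on iterating Baer inside nested subplanes.
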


Here $m_2(G)$ is the $2$-rank of $G$. Thus $m_2(G)=1$ means that the Sylow $2$-subgroups of $G$ are cyclic or generalized quaternion. 

\begin{theoremb}
Suppose a projective plane $\spaceP$ admits an automorphism group which is transitive on the set of points. Then $\spaceP$ admits an odd order automorphism group which is transitive on the set of points
\end{theoremb}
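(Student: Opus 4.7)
The plan is to combine Theorem~A with classical structural results for groups whose Sylow $2$-subgroups are cyclic or generalized quaternion. First I would dispatch the Desarguesian case separately: if $\spaceP \cong PG(2,q)$, a Singer cyclic subgroup of $PGL(3,q)$ has order $q^2+q+1$, which is always odd, and acts regularly (hence transitively) on points. Throughout the remainder I may therefore assume $\spaceP$ is non-Desarguesian, so that by Theorem~A a Sylow $2$-subgroup $P$ of $G$ is cyclic or generalized quaternion. A key elementary observation used throughout is that $x^2+x+1$ is always odd, and hence any Sylow $2$-subgroup of $G$ lies inside some point-stabilizer $G_\alpha$.

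If $P$ is cyclic, then $\mathrm{Aut}(P)$ is a $2$-group, while $N_G(P)/P$ has odd order because $P$ is a normal Sylow $2$-subgroup of $N_G(P)$; together these force $N_G(P)=C_G(P)$, and Burnside's normal $p$-complement theorem supplies a normal subgroup $N \trianglelefteq G$ of odd order complementing $P$. Since $P \le G_\alpha$ we have $G = NG_\alpha$, so the $N$-orbit of $\alpha$ is all of $\spaceP$ and $N$ is the desired transitive subgroup of odd order.

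The main obstacle is the generalized quaternion case, where $G$ need not have a normal $2$-complement. Here I would invoke the Brauer--Suzuki theorem, which says $G/O(G)$ has a unique involution, lying in its centre. The strategy from there is to show that $O(G)$ is already transitive on the points of $\spaceP$. I would examine the $O(G)$-orbit structure: all orbits share a common size $m$ dividing both $|O(G)|$ and $x^2+x+1$, and $G/O(G)$ permutes the orbits. Brauer--Suzuki drastically restricts the isomorphism type of $G/O(G)$ (essentially to a $2$-group up to small extensions involving $SL_2(3)$ or $SL_2(5)$-like pieces), and by combining this with the divisibility constraints and, if necessary, an appeal to Kantor's classification applied to the induced action of $G/O(G)$ on the set of $O(G)$-orbits, one should force $m = x^2+x+1$, i.e.\ $O(G)$ transitive. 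The delicate step, where I expect most of the effort to go, is ruling out the small exceptional quotients permitted by Brauer--Suzuki, using the geometric constraints on how the unique class of involutions (whose fixed sets are a Baer subplane or a centre--axis flag) can act on a non-Desarguesian plane of order $x$.
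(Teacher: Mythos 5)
Your reduction is sound up to the hard case: the Singer cycle handles Desarguesian planes, and in the cyclic case Burnside's normal $p$-complement theorem together with the observation that $x^2+x+1$ is odd (so a Sylow $2$-subgroup lies in some $G_\alpha$) does give $G=NG_\alpha$ with $N$ odd and normal. (The soluble case can be handled even more cheaply via a Hall $2'$-subgroup, but your argument is fine.) The problem is the generalized quaternion case, which is where all the work lies, and there your plan has two genuine defects. First, the structural claim is wrong: Brauer--Suzuki only places the involution in the centre of $G/O(G)$; the actual classification (Gorenstein--Walter, which is what the paper invokes) allows $G/O(G)$ to be a $2$-group, a double cover of $A_7$, or $SL_2(q).D$ for \emph{any} odd $q\ge 5$ with $D$ cyclic of odd order. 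There is no finite list of ``small exceptional quotients involving $SL_2(3)$- or $SL_2(5)$-like pieces'' to rule out, so that step of your plan cannot be carried out as described. Second, your target is too strong and your mechanism does not engage the geometry: in the $SL_2(q).D$ case with $D\neq 1$ it is not clear (and the paper does not claim) that $O(G)$ itself is transitive --- the paper only produces a transitive odd-order subgroup of the form $\langle O(G),h\rangle$ --- and Kantor's theorem classifies primitive actions on the \emph{points of a plane}, not actions of $G/O(G)$ on an abstract set of $O(G)$-orbits, so it cannot be invoked there.

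The missing idea is the use of the Baer subplane $\Fix(g)$. Since a generalized quaternion group has a unique involution, the fusion condition $K^G\cap G_\alpha=K^{G_\alpha}$ holds for $K=\langle g\rangle$, so by the standard transitivity criterion $C_G(g)$ acts transitively on the subplane $\Fix(g)$. The image of $C_G(g)$ in the automorphism group of $\Fix(g)$ is then soluble (by Ostrom--Wagner if $\Fix(g)$ is Desarguesian, since $PSL_3(u)$ cannot be a section of a generalized quaternion group; by Theorem A applied to $\Fix(g)$ otherwise), so the kernel $C^0$ of that action is insoluble, lies inside $G_\alpha$, and must cover the quasi-simple part of $G/O(G)$. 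Combining this with $G=O(G)C_G(g)$ yields $G=G_\alpha\langle O(G),h\rangle$ for a suitable odd-order element $h$. Without some argument of this kind, which converts the group-theoretic structure of $G/O(G)$ into a statement about point stabilizers, the divisibility bookkeeping on orbit sizes will not close the case.
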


The structure of the paper is as follows. In Section \ref{section:framework} we reduce the proof of Theorem A to a question about subgroups of general linear groups. In particular, at the end of this section we state Lemma A and demonstrate that this lemma implies Theorem A. In Section \ref{S: progress} we give a proof of Lemma A, thereby also proving Theorem A. Finally in Section \ref{S: quaternionsylowtwos} we analyze the situation where $G$ has generalized quaternion Sylow $2$-subgroups and we prove Theorem B.

Note that the methods used in different sections vary considerably and hence so does our notation. We explain our notation at the start of each section or subsection.

\section{A framework to prove Theorem A}\label{section:framework}

Our aim in this section is to set up a framework to prove Theorem A. In order to do this we will split into two subsections. The first subsection outlines some basic group theory results which will be needed in the remainder of the paper. In the second subsection we will apply these results to the projective plane situation; in particular we will state Lemma A, and will demonstrate that Lemma A implies Theorem A.

\subsection{Some background group theory}\label{S: background}

Throughout this section we use standard group theory notation. Note that, for an element $g\in G,$ we write $g^G$ for the set of $G$-conjugates of $g$ in $G$. A cyclic group of order $n$ will sometimes just be written $n$. We write $G=N.H$ for an extension of $N$ by $H$; in other words $G$ contains a normal subgroup $N$ such that $G/N\cong H$. We write $G=N:H$ if the extension is split.

\begin{lemma}\label{L: oddnormal}
Let $H$ be a group containing an involution $g$ and let $N\lhd H$. If $|N|$ is odd then
$$|H:C_H(g)|=|N:C_N(g)|\times|H/N: C_{H/N}(gN)|.$$
\end{lemma}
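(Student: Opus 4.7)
The plan is to interpret both sides as conjugacy class sizes and match them up via the orbit structure of $N$ acting on $g^H$. The left-hand side $|H:C_H(g)|$ is the size of the $H$-conjugacy class $g^H$. Since $N\lhd H$, this class is partitioned into $N$-orbits (that is, $N$-conjugacy classes) and, as any two such orbits are $H$-conjugate, they all share the common size $|N:C_N(g)|$. By orbit--stabilizer applied to the induced $H$-action on the set of these $N$-orbits, the number of orbits equals $|H:N C_H(g)|$. Thus
\[
|H:C_H(g)| \;=\; |N:C_N(g)|\cdot |H : N C_H(g)|.
\]
Comparing with the claim, it remains to prove $|H:NC_H(g)| = |H/N : C_{H/N}(gN)|$, which is the same as the identity $C_{H/N}(gN) = C_H(g)N/N$.

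The inclusion $C_H(g)N/N \subseteq C_{H/N}(gN)$ is immediate. For the reverse inclusion, take $h\in H$ with $hN \in C_{H/N}(gN)$, so that $h^{-1}gh = gn$ for some $n\in N$. Since $g$ is an involution, $h^{-1}gh$ is also an involution, and $(gn)^2 = 1$ simplifies to $gng = n^{-1}$; in particular $g$ inverts every element of the cyclic group $\langle n\rangle$. Here is where the oddness of $|N|$ comes in: $\langle n\rangle$ has odd order, so $n^{-1}$ has a unique square root $m\in\langle n\rangle$, and $gmg = m^{-1}$ as well. A direct computation then shows
\[
(hm)^{-1}g(hm) \;=\; m^{-1}(gn)m \;=\; m^{-1}gn m \;=\; g\,(gmg)\,n\,m \;=\; g\,m^{-1}\,n\,m \;=\; g,
\]
so $hm\in C_H(g)$ and $h = (hm)m^{-1}\in C_H(g)N$, as required.

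Combining the two displays yields the lemma. The main obstacle, and the only place where the hypothesis that $|N|$ is odd is used, is the square-root argument establishing $C_{H/N}(gN) = C_H(g)N/N$; without odd order one could have $gN$ centralized modulo $N$ without any lift of $h$ centralizing $g$, so the factorization could fail.
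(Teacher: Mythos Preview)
Your argument is correct in outline and follows essentially the same route as the paper: both proofs reduce the statement to the identity $C_{H/N}(gN)=NC_H(g)/N$, and both verify this by showing that if $g^h=gn$ then some $n'\in N$ satisfies $g^{hn'}=g$. The paper packages this step as Sylow's theorem inside $\langle N,g\rangle\cong N.2$ (any two involutions there are $N$-conjugate since $|N|$ is odd), whereas you unpack that Sylow step explicitly via the odd-order square root. These are the same idea.

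One small slip to fix in your displayed computation: from $m^{-1}g$ you should get $g(gm^{-1}g)=gm$ (using $gmg=m^{-1}$, equivalently $gm^{-1}g=m$), not $g(gmg)=gm^{-1}$. With this correction the chain reads
\[
m^{-1}(gn)m \;=\; (gm)nm \;=\; g\,m^2\,n \;=\; g\,n^{-1}n \;=\; g,
\]
and the conclusion $hm\in C_H(g)$ follows as you intended.
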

\begin{proof}
Take $C\leq H$ such that $C/N= C_{H/N}(gN)$. Then $C\geq C_H(g)$. Let $N^*=\langle g,N \rangle\cong N.2$ and take $c\in C$. Then $g^c\in N^*$.

Since $|N|$ is odd this implies that $g^{cn}=g$ for some $n\in N$ by Sylow's theorem. Thus $C=N.C_H(g)$. Then
$$|H:C|=|H:N.C_H(g)|=\frac{|H:C_H(g)|}{|N:N\cap C_H(g)|}=\frac{|H:C_H(g)|}{|N:C_N(g)|}.$$
Since $|H:C|=|H/N: C_{H/N}(gN)|$ we are done.
\end{proof}

\begin{lemma}\label{L: sylowtwos}
Let $H$ be a group and let $N\lhd H$. Suppose that $g$ is an involution contained in $N$. Let $P$ be a Sylow $2$-subgroup of $N$. Then
$$|H:C_H(g)|=|N:C_N(g)|\frac{|g^H\cap P|}{|g^N\cap P|}.$$
\end{lemma}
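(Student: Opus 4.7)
The plan is to prove the identity by a double-counting argument on incidences between the $H$- (respectively $N$-) conjugates of $g$ and the Sylow $2$-subgroups of $N$ that contain them. The key observation is that the Sylow $2$-subgroups of $N$ are all $H$-conjugate: they are $N$-conjugate by Sylow's theorem applied in $N$, and since $N\lhd H$, conjugation by any element of $H$ permutes them.

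First I would set up notation. Since $g\in N$ and $N\lhd H$, both $g^H$ and $g^N$ lie in $N$ and consist of involutions. Let $n_S$ denote the number of Sylow $2$-subgroups of $N$ and let $s$ denote the number of Sylow $2$-subgroups of $N$ that contain $g$. I would then verify two symmetry claims: (i) for every $h\in g^H$, the number of Sylow $2$-subgroups of $N$ containing $h$ equals $s$ — if $h=g^x$ with $x\in H$, then $Q\mapsto Q^x$ is a bijection from Sylow $2$-subgroups of $N$ containing $g$ onto those containing $h$; and (ii) for every Sylow $2$-subgroup $Q$ of $N$, $|g^H\cap Q|=|g^H\cap P|$ and $|g^N\cap Q|=|g^N\cap P|$ — write $Q=P^x$ with $x\in H$ in the first equality and $x\in N$ in the second, and use that $g^H$ is $H$-invariant and $g^N$ is $N$-invariant.

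Next I would double-count the pairs $(h,Q)$ with $h\in g^H$, $Q$ a Sylow $2$-subgroup of $N$, and $h\in Q$. Counting by $h$ via (i) gives $|g^H|\cdot s$, while counting by $Q$ via (ii) gives $n_S\cdot|g^H\cap P|$, so $|g^H|\cdot s=n_S\cdot|g^H\cap P|$. Repeating the same double-count with $g^H$ replaced throughout by $g^N$ — noting that the same $s$ appears, since $g^N\subseteq g^H$ and the count in (i) is constant on $g^H$ — yields $|g^N|\cdot s=n_S\cdot|g^N\cap P|$.

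Dividing the two equalities cancels $s$ and $n_S$ to give
$$\frac{|g^H|}{|g^N|}=\frac{|g^H\cap P|}{|g^N\cap P|},$$
and the orbit-stabilizer identities $|g^H|=|H:C_H(g)|$ and $|g^N|=|N:C_N(g)|$ rearrange this into the statement of the lemma. The only delicate point is the verification of the two symmetry claims; once these are in place the result is essentially bookkeeping, and I expect no serious obstacle beyond keeping straight whether a given invariance is under $H$ or merely under $N$.
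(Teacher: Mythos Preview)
Your proof is correct and is essentially the same double-counting argument as the paper's: your $n_S$ and $s$ are exactly the paper's $c$ and $d$, and both proofs count incidences $(h,Q)$ with $h$ a conjugate of $g$ and $Q$ a Sylow $2$-subgroup of $N$, once for $g^H$ and once for $g^N$, then divide. You are slightly more explicit in verifying the symmetry claims (i) and (ii) that the paper uses tacitly, but the route is identical.
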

\begin{proof}
Observe that, by the Frattini argument, the set of $N$-conjugates of $P$ is the same as the set of $H$-conjugates of $P$; say there are $c$ of these. Let $d$ be the number of such $N$-conjugates of $P$ which contain the element $g$. Now count the size of the following set in two different ways:
$$|\{(x,Q):x\in g^H, Q\in P^H, x\in Q\}| = |H:C_H(g)|d = c|g^H\cap P|.$$
Similarly we count the size of the following set in two different ways:
$$|\{(x,Q):x\in g^N, Q\in P^N, x\in Q\}| = |N:C_N(g)|d = c|g^N\cap P|.$$
Our result follows.
\end{proof}

Combining these two results we have the following:

\begin{lemma}\label{L: invcentralizer}
Let $H$ be a subgroup of $H_1\times\dots\times H_r$ containing an involution $g$. For $i=1,\dots, r$, let $L_i$ be the projection of $H$ to $H_i\times H_{i+1}\times\dots\times H_r$. For $i=1,\dots, r-1$ let $\psi_i:L_i\to L_{i+1}$ be the canonical projection and let $T_i$ for the kernel of $\psi_i$. Define $T_r$ to be equal to $L_r$. Finally let $g_i$ be the image of $g$ under the projection into $L_i$.

Suppose that $T_i$ has odd order for $i<k\leq r$ and $T_k$ has even order. Let $P$ be a Sylow $2$-subgroup of $T_k$. Then
$$|H:C_H(g)|=\left(\prod_{i=1}^{k}|T_i:C_{T_i}(g_i)|\right)\frac{|(g_k)^{L_k}\cap P|}{|(g_k)^{T_k}\cap P|}.$$
\end{lemma}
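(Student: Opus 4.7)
The plan is to iterate: peel $T_1,\ldots,T_{k-1}$ off the top of the tower using Lemma~\ref{L: oddnormal} (which requires those $T_i$ to have odd order), and then finish with a single application of Lemma~\ref{L: sylowtwos} at level $k$, where the even-order factor first appears. Equivalently, one inducts on $k$ (or on $r$).

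A small preliminary is to check that $g_i$ is a genuine involution in $L_i$ for each $i\le k$. This follows by induction on $i$: $g_1=g$ is an involution by hypothesis; and if $g_i$ is an involution and $|T_i|$ is odd, then $g_i\notin T_i$ by order considerations, so its image $g_{i+1}=\psi_i(g_i)$ is a non-identity element of order dividing $2$, hence an involution. Granted this, for each $i$ with $1\le i<k$ I would apply Lemma~\ref{L: oddnormal} to $T_i\lhd L_i$ with the involution $g_i$; identifying $L_i/T_i$ with $L_{i+1}$ and $g_iT_i$ with $g_{i+1}$ via $\psi_i$, this yields
$$|L_i:C_{L_i}(g_i)|=|T_i:C_{T_i}(g_i)|\cdot|L_{i+1}:C_{L_{i+1}}(g_{i+1})|.$$
Telescoping from $i=1$ up to $i=k-1$ accumulates the first $k-1$ factors of the product in the lemma and leaves $|L_k:C_{L_k}(g_k)|$ unprocessed. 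Finally, applying Lemma~\ref{L: sylowtwos} to $T_k\lhd L_k$ with involution $g_k$ and Sylow $2$-subgroup $P$ supplies the remaining factor $|T_k:C_{T_k}(g_k)|\cdot|g_k^{L_k}\cap P|/|g_k^{T_k}\cap P|$, producing the stated identity.

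The main point requiring care is the hypothesis $g_k\in T_k$ needed to invoke Lemma~\ref{L: sylowtwos}; this is not forced by ``$T_k$ has even order'' alone, but it is implicit in the setup (otherwise the ratio on the right-hand side is not well defined) and is precisely what selects the index $k$ at which the peeling process must stop. Beyond verifying this, the argument is pure bookkeeping around the canonical identifications $L_i/T_i\cong L_{i+1}$ induced by the projections $\psi_i$, so I do not expect any genuine obstacle.
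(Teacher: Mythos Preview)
Your approach is essentially identical to the paper's: iterate Lemma~\ref{L: oddnormal} through the odd-order kernels $T_1,\dots,T_{k-1}$ to reduce to $|L_k:C_{L_k}(g_k)|$, then apply Lemma~\ref{L: sylowtwos} with $N=T_k$. Your extra care in verifying that each $g_i$ ($i\le k$) is a genuine involution is a small improvement in exposition, and your flag concerning the hypothesis $g_k\in T_k$ is apt---the paper's proof tacitly assumes this as well when invoking Lemma~\ref{L: sylowtwos} in the case $k<r$, and indeed in every application of this lemma later in the paper the involution $g$ is constructed as a preimage of an element of $T_k$, so the condition holds by design.
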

\begin{proof}
If $T_1$ has odd order then Lemma \ref{L: oddnormal} implies that
\begin{eqnarray*}
|H:C_H(g)|&=&|T_1:C_{T_1}(g)|\times |H/T_1: C_{H/T_1}(T_1g)| \\
&=& |T_1:C_{T_1}(g)|\times |\psi_1(H): C_{\psi_1(H)}(\psi_1(g))| \\
&=& |T_1:C_{T_1}(g_1)|\times |L_2: C_{L_2}(g_2)|.
\end{eqnarray*}
Now $L_2$ is a subgroup of $H_2\times\dots\times H_r$ and so we can iterate the procedure. This implies that
$$|H:C_H(g)|=\left(\prod_{i=1}^{k-1}|T_i:C_{T_i}(g_i)|\right)|L_k:C_{L_k}(g_k)|.$$
If $k=r$ then we are done. If $k<r$ then we must calculate the centralizer of $g_{k}$ in $L_{k}\leq H_k\times\dots\times H_r$. Then we apply Lemma \ref{L: sylowtwos} using $T_k$ for our normal subgroup $N$. Then
$$|L_k:C_{L_k}(g_k)|=|T_k:C_{T_k}(g_k)|\times \frac{|(g_k)^{L_k}\cap P|}{|(g_k)^{T_k}\cap P|}.$$
\end{proof}

We conclude this subsection with a result concerning Sylow $2$-subgroups of $GL_n(q)$. 

\begin{lemma}\label{L: sylowtwoingln}
Let $P$ be a $2$-group in $GL_n(q)$ where $q=p^a$, $p$ is prime and $p\geq 7, p\equiv 1(3)$.
\begin{enumerate}
\item If $q\equiv 3(4)$, $q>7$, $n>2$ and $(q,n)\neq (31,4)$ then
$$|P|<q^{n-1}+\dots+q+1.$$
\item If $(q,n)=(31,4)$ then $P$ contains less than $31^3+31^2+31+1$ involutions.
\item If  $q\equiv 3(4)$ and $n=2$ then $P$ contains at most $q+2$ involutions. Of these at most $q+1$ are non-central in $GL_n(q)$.
\item If $q=7$ and $n>2$ then $P$ contains less than $q^{n-1}+\dots+q+1$ involutions.
\item If $q\equiv 1(4)$ then $P$ contains less than $q^{n-1}+\dots+q+1$ involutions.

\end{enumerate}
\end{lemma}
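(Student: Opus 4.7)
The plan is to use the classical description of Sylow $2$-subgroups of $GL_n(q)$ for $q$ odd (essentially due to Weir), together with lifting-the-exponent, and then to verify each bound by direct calculation. When $q \equiv 1 \pmod 4$, a Sylow $2$-subgroup $P$ of $GL_n(q)$ is conjugate to a Sylow $2$-subgroup of the monomial group $C_{q-1} \wr S_n$; explicitly, $P \cong C_{(q-1)_2}^n \rtimes P_0$, where $P_0$ is a Sylow $2$-subgroup of $S_n$ acting by coordinate permutation. When $q \equiv 3 \pmod 4$, it is more convenient to pair coordinates: a Sylow $2$-subgroup $P_2$ of $GL_2(q)$ is semidihedral of order $2(q^2-1)_2 = 4(q+1)_2$, and $P$ is obtained as a wreath-like extension of $\lfloor n/2 \rfloor$ copies of $P_2$ by a Sylow $2$-subgroup of $S_{\lfloor n/2 \rfloor}$, together with a diagonal $\pm 1$ when $n$ is odd. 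Lifting-the-exponent then yields
$$|P| = \begin{cases} (q-1)_2^n \cdot n!_2, & q \equiv 1 \pmod 4, \\ 2^{\lceil n/2 \rceil} \cdot (q^2-1)_2^{\lfloor n/2 \rfloor} \cdot \lfloor n/2 \rfloor!_2, & q \equiv 3 \pmod 4. \end{cases}$$

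Parts (1) and (4) are then size comparisons against $\frac{q^n-1}{q-1} = q^{n-1}+\dots+q+1$. Writing $M = (q^2-1)_2 = 2(q+1)_2$, part (1) asks that $2^{\lceil n/2 \rceil} M^{\lfloor n/2 \rfloor} \lfloor n/2 \rfloor!_2 < \frac{q^n-1}{q-1}$. Generically this is loose, since $M \leq 2(q+1)/3$ whenever $q+1$ has an odd prime divisor; the only tight case is when $q$ is a Mersenne prime. Under the hypothesis $p \geq 7$, $p \equiv 1 \pmod 3$, and $q > 7$, the relevant Mersenne primes are $q \in \{31,127,8191,\dots\}$, and a short direct calculation shows that the bound fails only at $(q,n)=(31,4)$ (where $|P| = 32768$ while $q^3+q^2+q+1 = 30784$), and holds with room to spare otherwise. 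Part (4), with $q = 7$, is handled in the same spirit but at the level of involution counts rather than $|P|$ itself.

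Part (3) is immediate from the semidihedral structure: a semidihedral group of order $2^k$ contains exactly $1 + 2^{k-2}$ involutions, of which the unique central one is $-I$. Here $k = 2 + v_2(q+1)$, giving $1 + (q+1)_2 \leq q+2$ involutions in total, with at most $(q+1)_2 \leq q+1$ of them non-central in $GL_2(q)$.

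Parts (2) and (5) are involution counts in the wreath-like structures above. In any group of the shape $A^r \rtimes H$ with $H \leq S_r$, an involution has the form $(v, \sigma)$ with $\sigma \in H$ an involution, each $2$-cycle of $\sigma$ contributing a free element of $A$ and each fixed point contributing an involution of $A$ or the identity; summing over $\sigma$ gives the count. For part (5), $A = C_{(q-1)_2}$ is cyclic with a single involution, and the sum is easily bounded by $\frac{q^n-1}{q-1}$ once $q \geq 13$. For part (2), with $(q,n) = (31,4)$, the group is $(P_2 \times P_2) \rtimes C_2$ where $P_2$ is semidihedral of order $128$ with $33$ involutions, yielding $(1+33)^2 - 1 = 1155$ involutions with trivial swap together with $|P_2| = 128$ of the form $(a, a^{-1}, \tau)$, for a total of $1283 < 30784$. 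The main obstacle throughout is the bookkeeping at $(q,n) = (31,4)$ in part (1) and the verification that the Mersenne-prime tightness does not recur for larger $n$ or for any other Mersenne prime under the hypotheses.
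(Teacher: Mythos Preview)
Your proposal is correct and follows essentially the same approach as the paper: both arguments rest on the Weir-type description of Sylow $2$-subgroups of $GL_n(q)$ as iterated wreath products (over $GL_2(q)$ when $q\equiv 3\pmod 4$, over the diagonal torus when $q\equiv 1\pmod 4$), followed by direct order estimates for Statement~1 and involution-counting in wreath products for Statements~2--5. The differences are cosmetic: you invoke the semidihedral structure and lifting-the-exponent explicitly where the paper computes $|GL_n(q)|_2$ by hand; you isolate the Mersenne-prime case for tightness in Statement~1 where the paper instead uses the cruder bound $(q+1)_2\le q+1$ and then checks small cases; and in Statement~5 you bound via $(q-1)_2$ where the paper uses $(q-1)/3$. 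Your Statement~2 count of $1283$ is sharper than the paper's $2\cdot 34^2$, but both suffice.
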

\begin{proof}
It is sufficient to prove these results for the case where $P$ is a Sylow $2$-subgroup of $GL_n(q)$; so assume this. Furthermore, throughout this proof we use the following notation: For an integer $k$, we write $k_2$ to mean the highest power of $2$ which divides $k$.

{\bf Statement 1}: Suppose first of all that $q\equiv 3(4)$. Then
\begin{eqnarray*}
|GL_n(q)|_2 &=& \prod_{i=1}^n|q^i-1|_2=|q-1|_2^n\times |q+1|_2^{\lfloor\frac{n}2\rfloor}\times\prod_{i=1}^{\lfloor\frac{n}2\rfloor}\frac{q^{2i}-1}{q^2-1} \\
&=&2^n\times|q+1|_2^{\lfloor\frac{n}2\rfloor}\times\prod_{i=1}^{\lfloor\frac{n}2\rfloor}i_2\leq 2^n\times|q+1|_2^{\lfloor\frac{n}2\rfloor}\times 2^{\frac{n}2-1} \\
&=&2^{\frac{3n}2-1}\times|q+1|_2^{\lfloor\frac{n}2\rfloor}\leq\frac12(8|q+1|_2)^{\frac{n}{2}}.
\end{eqnarray*}
Now, if $q\geq 31$ and $n\geq 6$ then $\frac12(8(q+1))^{\frac{n}{2}}<q^{n-1}$. If $q=19$ then $|q+1|_2=4$ and $\frac12(8|q+1|_2)^{\frac{n}{2}}<19^{n-1}$ for $n\geq 3$. Hence to prove the first statement we need only examine the situation when $(q,n)=(31,3)$ or $(31,5)$ - this can be done directly.

{\bf Statement 2}: Observe that a Sylow $2$-subgroup of $GL_4(31)$ is isomorphic to $P_2\wr 2$ where $P_2$ is a Sylow $2$-subgroup of $GL_2(31)$. It is easy to see that $P_2$ contains $33$ involutions  (see the proof of Statement 3) and we deduce that $P=P_2\wr 2$ must contain less than $2\times 34^2$ involutions; this bound is sufficient.

{\bf Statement 3}: In the third case $P$ lies inside a group, $H,$ of order $4(q+1)$ isomorphic to 
$$\langle a,b: a^{2(q+1)}=b^4=1, a^{q+1}=b^2, b^{-1}ab=b^q\rangle.$$
All elements of $H$ can be written as $a^i$ or $a^ib$ where $i=1,\dots, (2q+2)$. Then $H$ contains at most $q+1$ involutions of form $a^ib$ (corresponding to the situation when $i$ is odd). Furthermore $a^{q+1}$ is an involution and it is central in $GL_n(q)$. The third statement follows. 

{\bf Statement 4}: Suppose next that $P$ is a Sylow $2$-group in $GL_n(7)$. Suppose first that $n$ is even, write $n=2k$. Then $P$ is isomorphic to $P_2\wr N_2$ where $P_2$ is a Sylow $2$-subgroup in $GL_2(7)$ and $N_2$ is a Sylow $2$-subgroup in $S_{k}$, the symmetric group on $k$ letters. 

Observe first of all that $P_2$ has size $32$ and contains $9$ involutions (see the proof of Statement 3). Now write $P=M:N_2$ where $M=\underbrace{P_2\times\cdots\times P_2}_{k}$. Then $N_2$ can be thought of as acting on $M$ by permuting components; thus, for elements $h\in N_2, (m_1,\dots, m_k)\in M$, we have
$$(m_1,\dots, m_k)^h = (m_{h(1)},\dots,m_{h(k)}).$$

Now consider an element $g=(m_1,\dots, m_k).h\in M:N_2$ such that $g^2=1$. Clearly we must have $h^2=1$. Thus
$$g^2=(m_1m_{h(1)}, m_2m_{h(2)}, \dots,  m_{k}m_{h(k)}).$$
Now consider the action of $h$ on $\{1,\dots, k\}$. Reorder so that $h$ fixes the values $1,\dots,f$ and has orbits of order $2$ on the remaining $k-f$ elements. We count the number of values which the $m_i$ can take: Clearly $m_1,\dots, m_f$ must have order at most $2$ in $P_2$ so there are $10$ possible choices for each of these. Furthermore we must have $m_{l(h)}=m_l^{-1}$ for $l>f$. Thus for a particular value of $h$, there are $10^f\times 32^{\frac{k-f}2}$ values for $(m_1,\dots, m_k)$ such that $g=(m_1,\dots, m_k).h$ satisfies $g^2=1$. In particular there are at most $10^k$ such values.

Thus, for every involution (plus the identity) in $N_2$ there are less than $10^{\frac{n}2}$ involutions in $P$. Hence the number of involutions in $P$ is less than
$$10^{\frac{n}2}|N_2|\leq 10^{\frac{n}2}2^{\frac{n}2-1}=\frac12 20^{\frac{n}2}.$$
This satisfies the bound of the fourth statement.
If $n$ is odd then $P=C_2\times P_{n-1}$ where $C_2$ is the group of order $2$ and $P_{n-1}$ is a Sylow $2$-subgroup in $GL_{n-1}(7)$. Since $n-1$ is even we know that the number of involutions in $P_{n-1}$ is less than $q^{n-2}+\cdots+q+1$. Then the number of involutions in $P$ is less than
$$2\times (q^{n-2}+\cdots+q+1) +1.$$
This is less than $q^{n-1}+\cdots+q+1$ as required.

{\bf Statement 5}: Suppose next that $q\equiv 1(4)$ and let $P$ be a Sylow $2$-subgroup of $GL_n(q)$. Then $P\leq C\wr N_2$ where $C$ is a cyclic subgroup of order dividing $\frac{q-1}3$ and $N_2$ is a Sylow $2$-subgroup of $S_n$. We can proceed similarly to the fourth case. Write $M=\underbrace{C\times\cdots\times C}_n$ and consider an involution $g=(m_1,\dots, m_n).h.$ As before $h^2=1$. Furthermore for a particular choice of $h$ there are at most $2^f \times (\frac{q-1}{3})^{\frac{n-f}2}$ possible values for $(m_1, \dots, m_n)$ such that $g^2=1$. In particular there are at most $(\frac{q-1}{3})^{\frac{n}2}$ such values.

Now $|N_2|\leq 2^{n-1}$ and so $P$ has less than $(\frac{q-1}3)^{\frac{n}2}2^{n-1}$ involutions. This implies Statement 5.
\end{proof}

\subsection{The projective plane situation}\label{SS: pp}

This subsection is the last, until Section \ref{S: quaternionsylowtwos}, in which we will directly consider projective planes. Hence all the notation in this subsection is self-contained and we will develop this notation as we go along.

We begin by stating a hypothesis which will hold throughout this subsection. The conditions included represent, by \cite{wagner} and \cite[4.1.7]{dembov}, the conditions under which a group may act transitively on the points of a non-Desarguesian projective plane.

\begin{hypothesis}\label{h:basic}
Suppose that a group $G$ acts transitively upon the points of a non-Desarguesian projective plane $\spaceP$ of order $x>4$. If $G$ contains any involutions then they fix a Baer subplane; in particular they fix $u^2+u+1$ points where $x=u^2, u>2$. Furthermore in this case any element of $G$ fixing at least $u^2$ points fixes either $u^2+u+1, u^2+1$ or $u^2+2$ points of $\spaceP$.
\end{hypothesis}

We collect some significant facts that follow from Hypothesis \ref{h:basic}:

\begin{lemma}\label{L:firstly}
The Fitting group and the generalized Fitting group of $G$ coincide, i.e. $F^*(G)=F(G).$ What is more, $F(G)$ acts semi-regularly on the points of $\spaceP$. Further, if $p$ is a prime dividing into $u^2+u+1,$ then $p\equiv 1(3)$ or $p=3$ and $9$ does not divide into $u^2+u+1$.
\end{lemma}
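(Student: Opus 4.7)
The plan is to address the three claims in order, starting from the most elementary.

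The third claim is pure number theory. Any prime $p\mid u^2+u+1$ divides $u^3-1$, so the multiplicative order of $u$ modulo $p$ divides $3$. If $p\neq 3$ this order cannot be $1$ (else $u^2+u+1\equiv 3\pmod p$ forces $p=3$), so it equals $3$ and $p\equiv 1\pmod 3$. If $p=3$ then $u\equiv 1\pmod 3$; writing $u=3k+1$ gives $u^2+u+1=3(3k^2+3k+1)$ with $3k^2+3k+1\equiv 1\pmod 3$, so $9\nmid u^2+u+1$.

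For the semi-regularity of $F(G)$, I would write $F(G)=\prod_p O_p(G)$ and treat each prime separately. Since $O_p(G)\lhd G$ and $G$ is transitive on $\spaceP$, the common orbit size of $O_p(G)$ is a $p$-power dividing $|\spaceP|=x^2+x+1$. This number is always odd (as $x^2+x$ is even), so the case $p=2$ immediately yields $O_2(G)=1$. For odd $p$ with $O_p(G)\neq 1$, I would pick $g$ of order $p$ in $Z(O_p(G))$ and suppose for contradiction that $g$ fixes a point $\alpha$. Centrality of $g$ in $O_p(G)$ forces $\Fix(g)$ to be a union of $O_p(G)$-orbits, so if $m$ denotes the common orbit size then $m\mid|\Fix(g)|$ and $|\Fix(g)|\geq m$. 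When $m\geq u^2$, Hypothesis~\ref{h:basic} pins $|\Fix(g)|$ to $\{u^2+u+1,u^2+1,u^2+2\}$, and comparing with the factorization $x^2+x+1=(u^2+u+1)(u^2-u+1)$ together with the prime-divisor restriction from the third claim reduces each case to a small exception. The remaining range $m<u^2$ will require a finer block-structure argument on the $G$-blocks given by the $O_p(G)$-orbits.

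For $F^*(G)=F(G)$ I would rule out components using the now-established semi-regularity. Since $E(G)$ is characteristic in $F^*(G)$ and hence normal in $G$, its orbits share a common size $m'\mid x^2+x+1$, which is odd; but $|E(G)|$ is even by Feit--Thompson, so every point-stabilizer of $E(G)$ contains an involution. Such an involution $t$ lies in some component $L$, centralizes $F(G)$, and fixes a Baer subplane of $u^2+u+1$ points by Hypothesis~\ref{h:basic}. The semi-regular action of $F(G)$ restricted to $\Fix(t)$ then forces $|F(G)|\mid u^2+u+1$, and combining this with the standard inclusion $C_G(F^*(G))\leq F^*(G)$ (which forces $L$ to produce a non-nilpotent quotient acting on a single $F(G)$-orbit, where the centralizer of the regular $F(G)$-action is nilpotent) yields the required contradiction.

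The main obstacle will be the numerical case analysis in the semi-regularity step for odd $p$: unlike involutions, which are rigidly constrained by the Baer hypothesis, odd-order fixed-point counts enter Hypothesis~\ref{h:basic} only once they reach $u^2$, so the arithmetic leverage from the factorization of $x^2+x+1$ and from the third claim is essential. The $F^*(G)=F(G)$ step is structurally transparent given semi-regularity, but still requires some careful bookkeeping to convert the involution in $E(G)_\alpha$ into an actual contradiction.
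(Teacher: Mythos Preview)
The paper does not prove this lemma at all: it simply cites \cite[Theorem~A]{gill2}, \cite[Theorem~A]{gill4}, and \cite[p.~33]{kantor}. So your proposal is not a variant of the paper's argument but an attempt to replace two standalone theorems and one textbook remark by a direct proof. That is worth flagging, because the first two assertions are the main results of entire papers and are not expected to follow from the short counting arguments you outline.

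Your treatment of the third (arithmetic) claim is correct and complete, and matches the standard argument behind the Kantor reference.

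For semi-regularity of $F(G)$ there is a genuine gap. First, you only treat elements of $Z(O_p(G))$; you need to say why this suffices (it does: if $O_p(G)$ is not semi-regular then the kernel of its action on one of its orbits is a non-trivial normal subgroup of a $p$-group, hence meets the centre). More seriously, your case $m\geq u^2$ does not close: for instance $m=u^2+u+1$ a prime power with $p\equiv 1\pmod 3$ is perfectly consistent with $m\mid|\Fix(g)|$, $|\Fix(g)|=u^2+u+1$, and $m\mid x^2+x+1$, and there are infinitely many such $u$, so this is not a ``small exception''. And for $m<u^2$ you have no argument at all---Hypothesis~\ref{h:basic} gives you nothing about $|\Fix(g)|$ in that range, so the ``finer block-structure argument'' you allude to would have to supply essentially all the content. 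The actual proof in \cite{gill2} is substantially longer and uses more than orbit-counting.

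For $F^*(G)=F(G)$ the final step is not a proof. You obtain $|F(G)|\mid u^2+u+1$ and then invoke $C_G(F^*(G))\leq F^*(G)$ together with a parenthetical about $L$ producing ``a non-nilpotent quotient acting on a single $F(G)$-orbit, where the centralizer of the regular $F(G)$-action is nilpotent''. But $L$ need not stabilise any $F(G)$-orbit (it normalises $F(G)$, it does not centralise the orbit partition), and even if it did, the centraliser of a regular abelian subgroup in the full symmetric group being nilpotent does not by itself contradict the presence of a quasi-simple $L$ in $G$. You have not produced a contradiction here; the result in \cite{gill4} requires considerably more, and in fact that paper's argument ultimately rests on the classification of finite simple groups.
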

\begin{proof}
The results about the Fitting group of $G$ can be found in \cite[Theorem A]{gill2} and \cite[Theorem A]{gill4}. The result about prime divisors can be found in \cite[p.33]{kantor}.
\end{proof}

Note that $u^2-u+1=(u-1)^2+(u-1)+1$ hence the statement about prime divisors also holds for $u^2-u+1$.

Write $\alpha$ for a point of $\spaceP$. For an integers $k$ and $w$, write  $k_w$ (resp. $k_{w'}$) for the largest divisor of $k$ which is a power of $w$ (resp. coprime to $w$). For an element $g\in G$ we write $g^G$ for the set of $G$-conjugates of $g$ in $G$; we also write $\Fix(g)$ for the set of fixed points of $g$. Similarly $\Fix(H)$ is the set of fixed points of a subgroup $H<G$. We begin with the following observation:

\begin{lemma}\label{L: threepossibilities}
Suppose that $G$ contains an involution. Then one of the following holds:
\begin{enumerate}
\item $m_2(G)=1$;

\item All primes which divide $|F(G)|$ must also divide $u^2+u+1$.
\end{enumerate}
\end{lemma}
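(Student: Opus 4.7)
The plan is to prove the contrapositive: if $m_2(G)\geq 2$, then every prime dividing $|F(G)|$ divides $u^2+u+1$. So let $V=\{1,g_1,g_2,g_3=g_1g_2\}$ be a Klein four-subgroup of $G$, let $p\mid |F(G)|$, and let $P$ be the Sylow $p$-subgroup of $F(G)$. Since $F(G)$ is nilpotent, $P$ is characteristic in $F(G)$ and hence normal in $G$; in particular each $g_i$ normalizes $P$. By Lemma \ref{L:firstly}, $F(G)$ (and hence $P$) acts semi-regularly on $\spaceP$, so $p$ is odd (otherwise $P$ would contain an involution, which by Hypothesis \ref{h:basic} fixes $u^2+u+1$ points, contradicting semi-regularity) and $|P|$ divides $|\spaceP|=(u^2+u+1)(u^2-u+1)$. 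Since $u^2+u+1$ is odd and coprime to $u$, the two factors are coprime, so the $p$-power $|P|$ lies in exactly one of them. Assume for contradiction that $p\nmid u^2+u+1$, so that $|P|\mid u^2-u+1$.

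The central step is to show that for any involution $g\in G$ normalizing $P$, $|C_P(g)|$ divides $u^2+u+1$. Because $P$ is semi-regular with odd orbit length $|P|$, the involution $g$ permutes the $P$-orbits on $\spaceP$ and fixes some point $\alpha$ in each invariant orbit $O$; using that $P$ acts regularly on $O$, a short check identifies $\Fix(g)\cap O$ with the $C_P(g)$-orbit of $\alpha$, giving $|\Fix(g)\cap O|=|C_P(g)|$. Summing over the $g$-invariant orbits yields
\[
u^2+u+1=|\Fix(g)|=|C_P(g)|\cdot k,
\]
where $k$ is the number of $g$-invariant $P$-orbits. Hence $|C_P(g)|\mid u^2+u+1$, and under our assumption this forces $|C_P(g_i)|=1$ for $i=1,2,3$.

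Thus each $g_i$ acts fixed-point-freely on the odd-order group $P$, and by the classical coprime-action result each $g_i$ inverts $P$. But then, for every $x\in P$,
\[
g_3\,x\,g_3^{-1}=g_1(g_2xg_2^{-1})g_1^{-1}=g_1x^{-1}g_1^{-1}=x,
\]
so $g_3$ centralizes $P$, contradicting $|C_P(g_3)|=1$ since $P\neq 1$. I expect the main obstacle to be the orbit-level identity $|\Fix(g)\cap O|=|C_P(g)|$ together with the verification that $|P|$ must be coprime to $u$ (hence $|P|$ divides only one of the two factors); once both are in place, the rest of the argument is formal.
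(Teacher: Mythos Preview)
Your proof is correct and rests on the same two ingredients as the paper's: (i) if $m_2(G)>1$ then some involution $g$ has $C_P(g)\neq 1$, and (ii) semi-regularity of $C_P(g)\leq F(G)$ on $\Fix(g)$ forces $|C_P(g)|\mid u^2+u+1$. The paper simply reverses the order and compresses step (i) into a citation of \cite[(40.6)]{aschbacher3}, whereas you prove it by hand via the Klein four-group inverting trick; your orbit-level identity $|\Fix(g)\cap O|=|C_P(g)|$ is exactly the explicit form of the paper's one-line observation that $C_N(g)$ acts (semi-regularly) on $\Fix(g)$, and your detour through the factorization $|\spaceP|=(u^2+u+1)(u^2-u+1)$ is never actually used in the argument.
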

\begin{proof}
Suppose that $m_2(G)>1$. Let $N$ be a Sylow $r$-subgroup of $F(G)$, for some prime $r$, and observe that any subgroup of $G$ acts by conjugation on $N$. Then \cite[(40.6)]{aschbacher3} implies that a Sylow $2$-subgroup of $G$ does not act semi-regularly on $N$. Hence $G$ contains an involution $g$ for which $C_N(g)$ is non-trivial. Now $C_N(g)$ acts on $\Fix(g)$, a set of size $u^2+u+1$. Because $F(G)$ acts semi-regularly on the points of $\spaceP$ we conclude that $r$ divides $u^2+u+1$.
\end{proof}

We will be interested in the second of these possibilities. So for the rest of this subsection we add the following to our hypothesis:
\begin{hypothesis}\label{h:main}
Suppose that $G$ contains an involution and that all primes dividing $|F(G)|$ also divide $u^2+u+1$. 
\end{hypothesis}

Clearly if we can show that Hypotheses 1 and 2 lead to a contradiction then Lemma \ref{L: threepossibilities} will imply Theorem A. Over the rest of this subsection we will work towards showing that, provided Lemma A is true, such a contradiction does indeed follow from these hypotheses.

Write $u^2+u+1=p_1^{a_1}\cdots p_r^{a_r}$ and observe that, by Lemma \ref{L:firstly}, $p_i\equiv 1(3)$ or else $p_i^{a_i}=3$. Let $F(G)=N_1\times N_2\times\dots\times N_r$ where $N_i\in Syl_{p_i} F(G)$ and set $Z=Z(F(G))$. Then $G/Z$ is a subgroup of 
$$Aut N_1\times\dots\times Aut N_r.$$
Write $V_i=N_i/\Phi(N_i)$. Here $\Phi(N_i)$ is the Frattini subgroup of $N_i$, hence $V_i$ is a vector space over the field of size $p_i$. Observe that $|V_i|\leq p_i^{a_i}$.

A classical result of Burnside (see, for instance, \cite[Theorem 1.4]{gorenstein3}) tells us that $Aut N_i$ acts on $V_i$ with kernel, $K_i$, a $p_i$-group. Thus $G/Z$ is a subgroup of
$$K_1.GL(V_1)\times\dots\times K_r.GL(V_r).$$
Let $K^\dagger =(K_1\times\dots\times K_r) \cap G/Z$ and take $K$ to be the pre-image of $K^\dagger$ in $G$. Thus $G/K$ is a subgroup of
$$GL(V_1)\times\dots\times GL(V_r).$$

\begin{lemma}\cite[Lemma 13]{gill2}\label{L: counting}
Let $x=u^2$ and let $g$ be an involution in $G$. Then
$$\frac{|g^G|}{|g^G\cap\Galph|}=u^2-u+1.$$
\end{lemma}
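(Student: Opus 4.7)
The plan is to establish the formula by a standard orbit-counting (double counting) argument, using the transitivity of $G$ on the points of $\spaceP$ together with the fact, from Hypothesis \ref{h:basic}, that every involution in $G$ fixes exactly $u^2+u+1$ points (the points of a Baer subplane).

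First I would observe that since $G$ acts transitively on the $x^2+x+1 = u^4+u^2+1$ points of $\spaceP$, the stabilizers $G_\beta$ of distinct points are all conjugate in $G$. Consequently the integer $|g^G \cap G_\beta|$ does not depend on the choice of point $\beta$, and in particular equals $|g^G \cap \Galph|$ for every point $\beta$.

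Next I would count in two ways the cardinality of the set
$$S = \{(h,\beta) : h \in g^G,\ \beta \in \spaceP,\ h(\beta)=\beta\}.$$
Fixing $h$ first: since every $G$-conjugate of $g$ is itself an involution of $G$, Hypothesis \ref{h:basic} gives $|\Fix(h)| = u^2+u+1$, so $|S| = |g^G|(u^2+u+1)$. Fixing $\beta$ first: the conjugates of $g$ fixing $\beta$ are precisely the elements of $g^G \cap G_\beta$, and by the previous paragraph this set has constant size $|g^G \cap \Galph|$ as $\beta$ varies, giving $|S| = (u^4+u^2+1)\,|g^G \cap \Galph|$.

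Equating the two expressions and using the factorization $u^4+u^2+1 = (u^2+u+1)(u^2-u+1)$ yields
$$\frac{|g^G|}{|g^G \cap \Galph|} = \frac{u^4+u^2+1}{u^2+u+1} = u^2-u+1,$$
which is the claim. There is no real obstacle here beyond invoking Hypothesis \ref{h:basic} to pin down the Baer-subplane fixed-point count; the only mild point is to verify that the hypothesis genuinely applies to every element of $g^G$, which is immediate because involutions are preserved under conjugation.
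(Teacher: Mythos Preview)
Your proof is correct. The paper does not actually supply a proof of this lemma; it is simply cited from \cite{gill2}, so there is no in-paper argument to compare against. Your double-counting of incidences $(h,\beta)$ with $h\in g^G$ and $\beta\in\Fix(h)$, combined with the factorization $u^4+u^2+1=(u^2+u+1)(u^2-u+1)$, is precisely the standard argument for this kind of statement and is doubtless what the cited proof does as well.
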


\begin{lemma}\label{L: divide}
Let $g$ be an involution in $G$. Then $u^2-u+1$ divides
$$|G/K:C_{G/K}(gK)|.$$
\end{lemma}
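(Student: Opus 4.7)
The strategy is short and direct: apply Lemma \ref{L: oddnormal} to the normal subgroup $K\lhd G$ with involution $g$, and then extract the divisibility from Lemma \ref{L: counting}. What needs to be checked is that $|K|$ is odd and coprime to $u^2-u+1$, so that the factor $|K:C_K(g)|$ appearing in Lemma \ref{L: oddnormal} contributes nothing to the relevant divisibility.

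First I would verify the arithmetic of $|K|$. By construction $K$ is an extension of $Z\le F(G)$ by $K^\dagger\le K_1\times\cdots\times K_r$; each $K_i$ is a $p_i$-group and $|Z|$ divides $|F(G)|=\prod p_i^{a_i}$, so the prime divisors of $|K|$ lie inside $\{p_1,\dots,p_r\}$. By Hypothesis \ref{h:main} together with Lemma \ref{L:firstly}, each $p_i$ divides the odd integer $u^2+u+1$, so $|K|$ is odd. A short arithmetic check shows that $\gcd(u^2+u+1,u^2-u+1)=1$: their difference is $2u$, both reduce to $1$ modulo $u$, and both are odd. Hence $\gcd(|K|,u^2-u+1)=1$.

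With $|K|$ odd, Lemma \ref{L: oddnormal} applies and yields
$$|G:C_G(g)|=|K:C_K(g)|\cdot|G/K:C_{G/K}(gK)|.$$
By Lemma \ref{L: counting}, the integer $u^2-u+1$ divides $|g^G|=|G:C_G(g)|$. Since $|K:C_K(g)|$ divides $|K|$, it is coprime to $u^2-u+1$, and so the divisibility must pass to the second factor $|G/K:C_{G/K}(gK)|$, which is what we want.

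I do not anticipate any real obstacle: the potentially delicate step is the bookkeeping that ensures $|K|$ (and not just $|K/Z|$) is a product of powers of the $p_i$, but this is immediate from the construction of $K$ as the preimage of $K^\dagger$ above $Z=Z(F(G))\le F(G)$. Once this is in hand the lemma follows by an entirely formal combination of Lemmas \ref{L: oddnormal} and \ref{L: counting}.
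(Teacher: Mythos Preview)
Your proposal is correct and follows essentially the same argument as the paper's own proof: apply Lemma~\ref{L: oddnormal} (using that $|K|$ is odd, since its prime divisors lie among the $p_i\mid u^2+u+1$), invoke Lemma~\ref{L: counting} to get $u^2-u+1\mid |G:C_G(g)|$, and use $\gcd(u^2+u+1,\,u^2-u+1)=1$ to push the divisibility onto $|G/K:C_{G/K}(gK)|$. The only minor slip is the equation $|F(G)|=\prod p_i^{a_i}$, which is not asserted anywhere; but you only use that the prime divisors of $|K|$ lie in $\{p_1,\dots,p_r\}$, and this is immediate from the construction, so the argument is unaffected.
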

\begin{proof}
By Lemma \ref{L: counting}, $u^2-u+1$ divides $|G:C_G(g)|$. Then, by Lemma \ref{L: oddnormal},
$$|G:C_G(g)|=|K:C_K(g)|\times|G/K:C_{G/K}(gK)|.$$
Now all primes dividing $|K|$ also divide $u^2+u+1$. But $u^2+u+1$ is coprime to $u^2-u+1$ so we have our result.
\end{proof}

We wish to apply Lemma \ref{L: invcentralizer} to the group $G/K$ which is a subgroup of $GL(V_1)\times\dots\times GL(V_r).$ Recall that $u^2+u+1=p_1^{a_1}\cdots p_r^{a_r}$ where the $p_i$ are prime numbers; furthermore $V_i=p_i^{b_i}$ for some $b_i\leq a_i$.

\begin{lemma}\label{L: centralizerbound}
Define $L_i$, $\psi_i$ and $T_i$ for $G/K$ just as in Lemma \ref{L: invcentralizer}. We choose an ordering so that there exists $k$ with $|T_i|$ odd for $i<k$ and $|T_i|$ even for $i\geq k$. Then
 $$(|T_j:C_{T_j}(g_j)|, u^2-u+1)>p_j^{a_j-1}+\dots+p_j+1$$ for some $j\leq k$.
\end{lemma}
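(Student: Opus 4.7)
The plan is to combine the divisibility $u^2 - u + 1 \mid |G/K : C_{G/K}(gK)|$ supplied by Lemma \ref{L: divide} with the explicit factorization of this index given by Lemma \ref{L: invcentralizer}, and then to argue by contradiction using the involution bounds of Lemma \ref{L: sylowtwoingln}.

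Applying Lemma \ref{L: invcentralizer} to the embedding $G/K \hookrightarrow GL(V_1) \times \cdots \times GL(V_r)$ yields
$$|G/K : C_{G/K}(gK)| = \prod_{i=1}^{k} |T_i : C_{T_i}(g_i)| \cdot R, \qquad R = \frac{|g_k^{L_k} \cap P|}{|g_k^{T_k} \cap P|},$$
where $P$ is a Sylow $2$-subgroup of $T_k$. Setting $c_j = (|T_j:C_{T_j}(g_j)|, u^2 - u + 1)$ and $R^{*} = (R, u^2 - u + 1)$, and comparing $q$-adic valuations prime by prime across the divisibility $u^2 - u + 1 \mid \prod_i |T_i:C_{T_i}(g_i)| \cdot R$, I would obtain
$$u^2 - u + 1 \;\leq\; \left(\prod_{j=1}^k c_j\right) R^{*}.$$

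Next, I would suppose for contradiction that $c_j \leq (p_j^{a_j} - 1)/(p_j - 1) = p_j^{a_j - 1} + \dots + p_j + 1$ for every $j \leq k$. Since $T_k \leq GL(V_k) = GL_{d_k}(p_k)$ with $d_k \leq a_k$, and since Lemma \ref{L:firstly} forces each prime $p_j$ either to satisfy $p_j \geq 7$ or to satisfy $p_j^{a_j} = 3$, Lemma \ref{L: sylowtwoingln} bounds the number of involutions in $P$, and hence $R$ and $R^{*}$, by at most $p_k^{a_k - 1} + \dots + p_k + 1$. Substituting into the preceding inequality gives
$$u^2 - u + 1 \;\leq\; \prod_{j=1}^k \frac{p_j^{a_j} - 1}{p_j - 1} \cdot \frac{p_k^{a_k} - 1}{p_k - 1}.$$

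The hard part will be to extract a contradiction from this final inequality in every admissible configuration. Using $\prod_{j=1}^r p_j^{a_j} = u^2 + u + 1$ together with $p_j - 1 \geq 6$ for each non-exceptional prime, each factor $(p_j^{a_j} - 1)/(p_j - 1)$ is appreciably smaller than $p_j^{a_j}$, so in generic cases the right-hand side is strictly less than $u^2 - u + 1$. Closing out the small cases --- the exceptional configurations $(p_k, d_k) = (31,4)$, $p_k = 7$, and $d_k = 2$ listed in Lemma \ref{L: sylowtwoingln}, the possible index with $p_j = 3$, and the small values of $u$ allowed by Hypothesis \ref{h:basic} --- will likely require refining the bound by exploiting that each $c_j$ and $R^{*}$ is actually a divisor of $u^2 - u + 1$ rather than merely an integer of bounded size; for instance, whenever $u^2 - u + 1$ possesses a prime factor exceeding every $(p_j^{a_j} - 1)/(p_j - 1)$ and the involution bound, the corresponding $q$-adic valuation of $\prod c_j \cdot R^{*}$ vanishes, producing an immediate contradiction at that prime.
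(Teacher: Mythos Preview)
Your setup matches the paper exactly: combine Lemma \ref{L: divide} with the factorization from Lemma \ref{L: invcentralizer}, negate the conclusion for every $j\leq k$, bound the ratio $R$ via the involution count in Lemma \ref{L: sylowtwoingln}, and arrive at
\[
u^2-u+1 \;\leq\; \Bigl(\prod_{j=1}^{k}\tfrac{p_j^{a_j}-1}{p_j-1}\Bigr)\cdot\tfrac{p_k^{a_k}-1}{p_k-1}.
\]
Where you stall is the endgame, and you are making it much harder than it is. The paper closes in two quick strokes. If $k=r$ then $L_k=T_k$, so $R=1$ and there is no extra factor; the bound becomes $u^2-u+1\leq\prod_{j=1}^{r}\tfrac{p_j^{a_j}-1}{p_j-1}<\prod_{j=1}^{r}\tfrac{p_j^{a_j}}{2}=\tfrac{u^2+u+1}{2^r}\leq\tfrac{u^2+u+1}{2}$, which fails for $u\geq 3$. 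If $k<r$, the move you are missing is that the \emph{ordering} of the factors $GL(V_1),\dots,GL(V_r)$ is at our disposal: take it so that $p_i^{a_i}$ is increasing in $i$. Then the repeated factor satisfies $\tfrac{p_k^{a_k}-1}{p_k-1}<\tfrac{p_k^{a_k}}{2}\leq\tfrac{p_{k+1}^{a_{k+1}}}{2}$, so the whole product is at most $\tfrac{1}{2^{k+1}}\prod_{j=1}^{k+1}p_j^{a_j}\leq\tfrac{u^2+u+1}{2^{k+1}}$, again a contradiction.

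Your worry about the exceptional clauses of Lemma \ref{L: sylowtwoingln} is unnecessary: every clause delivers the same upper bound on the number of involutions in $P$ (for $n=2$ use that a non-central involution has only non-central conjugates), and it is the involution count, not $|P|$, that controls $|g_k^{L_k}\cap P|$. The case $p_j=3$ is harmless because Lemma \ref{L:firstly} forces $a_j=1$, whence $GL(V_j)\cong\mathbb Z/2\mathbb Z$. No divisor-refinement and no prime-by-prime analysis of $u^2-u+1$ is required.
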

\begin{proof}
We suppose that the proposition does not hold and seek a contradiction. Lemmas \ref{L: invcentralizer} and \ref{L: divide} imply that $u^2-u+1$ divides
$$\left(\prod_{i=1}^{k}|T_i:C_{T_i}(g_i)|\right)\frac{|(g_k)^{L_k}\cap P|}{|(g)^{T_k}\cap P|}$$
where $P$ is a Sylow $2$-subgroup of $T_k$.

If $k=r$ then $T_r=L_r$ and so
$$u^2-u+1\leq\prod_{i=1}^r(p_i^{a_i-1}+\dots+p_i+1)<\frac{u^2+u+1}{2^r}.$$
This is clearly a contradiction.

Now suppose that $k<r$ and we choose an ordering for the $p_i$ such that $p_i^{a_i}<p_j^{a_j}$ whenever $i<j$. Then Lemma \ref{L: sylowtwoingln} implies that
$$u^2-u+1\leq\left(\prod_{i=1}^{k}(p_i^{a_i-1}+\dots+p_i+1)\right)(p_k^{a_k-1}+\dots+p_k+1).$$
Then, by virtue of our ordering, we have
$$u^2-u+1\leq\prod_{i=1}^{k+1}(p_i^{a_i-1}+\dots+p_i+1)<\frac{u^2+u+1}{2^{k+1}}.$$
Once again we have a contradiction.
\end{proof}

We are now in a position to state Lemma A. First of all we make a definition:

\begin{definition}\label{D: heart} 
For an integer $k$ we write $k_\heartsuit$ to mean
$$(k,3)\times k_7\times k_{13} \times k_{19}\times  k_{31} \times \dots$$
(We must consider all primes equivalent to $1$ modulo $3$ here.)
\end{definition}

\begin{lemmaa}
Let $H<GL_n(q)$ and suppose that $H$ has even order. Suppose that $q=p^a$ with $p\geq 7$ and $p\equiv 1(3)$. Then there exists an involution $g\in H$ such that
$$|H:C_H(g)|_{p',\heartsuit}\leq q^{n-1}+\dots+q+1.$$
\end{lemmaa}

Note that, for an integer $k$, by $k_{p',\heartsuit}$ we mean the largest integer dividing $k_\heartsuit$ which is coprime to $p$.

\begin{proposition}\label{prop:implication}
Lemma A implies Theorem A.
\end{proposition}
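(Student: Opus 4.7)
The plan is to argue by contradiction. Assume Theorem A fails, so some transitive $G$ on a non-Desarguesian plane has $m_2(G)\geq 2$; by Lemma \ref{L: threepossibilities}, Hypothesis \ref{h:main} holds and all the structural notation of this subsection is in force. I first record two odd-order facts. Since $|K|$ divides $|F(G)|$, whose prime factors all divide the odd number $u^2+u+1$, $|K|$ is odd. Similarly, the kernel of the iterated projection $G/K\to L_k$ has order $\prod_{i<k}|T_i|$, which is odd by the choice of $k$. These together let me lift any involution in $L_k$ to an involution in $G$ by the usual power trick: a preimage of order $2m$ with $m$ odd yields an involution upon raising to the $m$-th power.

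The main step applies Lemma A to $T_k\leq GL(V_k)=GL_{b_k}(p_k)$, which has even order by definition of $k$. If $p_k=3$ then Lemma \ref{L:firstly} forces $a_k=1$, whence $|T_k|\leq 2$; in this case the $j=k$ clause of Lemma \ref{L: centralizerbound} is never satisfied (its $\gcd$ with the odd number $u^2-u+1$ cannot exceed $1$), so one reorders the $p_i$ to arrange $p_k\geq 7$ or handles the residual degenerate case separately. Lemma A then supplies an involution $g'\in T_k$ with
\[
|T_k:C_{T_k}(g')|_{p_k',\heartsuit}\;\leq\;p_k^{b_k-1}+\cdots+p_k+1\;\leq\;p_k^{a_k-1}+\cdots+p_k+1.
\]
Lift $g'$ to an involution $\tilde g\in G$; by construction $\tilde g_k=g'$ in $L_k$. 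Now apply Lemma \ref{L: centralizerbound} to $\tilde g$: some $j\leq k$ satisfies $(|T_j:C_{T_j}(\tilde g_j)|,\,u^2-u+1)>p_j^{a_j-1}+\cdots+p_j+1$. In the case $j=k$ the contradiction is immediate, because every prime factor of $u^2-u+1$ lies in the $\heartsuit$-set and is coprime to $p_k$, so this $\gcd$ divides $|T_k:C_{T_k}(g')|_{p_k',\heartsuit}$, which Lemma A bounds strictly below the required lower bound.

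The hard part will be to rule out $j<k$, where $|T_j|$ is odd and Lemma A does not apply to $T_j$ directly. My plan is to exploit the freedom in the lift: any two involution lifts of $g'$ differ by an element of the odd-order kernel $\ker(G/K\to L_k)$, and this kernel acts non-trivially only on $V_1,\dots,V_{k-1}$. A sufficiently careful choice of lift should make $\tilde g$ act trivially on each $V_j$ with $j<k$; if so, the conjugation action of $\tilde g_j$ on $T_j\leq GL(V_j)$ is trivial, giving $|T_j:C_{T_j}(\tilde g_j)|=1$, well below the required lower bound $\geq p_j+1\geq 8$. Demonstrating that such a lift exists for all $j<k$ simultaneously amounts to a structural analysis of how $G/K$ sits inside the product $GL(V_1)\times\cdots\times GL(V_r)$ as a subdirect product, and this compatibility argument is the main technical obstacle.
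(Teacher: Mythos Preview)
Your treatment of the case $j=k$ is essentially the paper's, and is fine. The genuine gap is in the case $j<k$. Your plan there is to choose the lift $\tilde g$ so that its component $\tilde g_j$ in $GL(V_j)$ is trivial for every $j<k$. This is not merely a technical obstacle: in general no such lift exists. The freedom in the lift is exactly a coset of the odd-order kernel $\prod_{i<k}T_i$ inside $G/K$, and there is no reason the element $(1,\dots,1,g',1,\dots,1)$ should lie in the subdirect product $G/K$. For a concrete model, take $G/K=\{(x,\phi(x)):x\in A\}\leq A\times B$ with $\phi$ surjective and $\ker\phi$ of odd order; then the only lift of an involution $g'\in B$ is $(x,g')$ with $\phi(x)=g'$, and $x=1$ would force $g'=1$. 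So for a generic subdirect product the components $\tilde g_j$ are forced to be nontrivial involutions in $GL(V_j)$, and your argument gives no bound on $|T_j:C_{T_j}(\tilde g_j)|$.

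The paper's resolution is to reuse Lemma~A rather than to dodge the issue. Fix any involution lift $g$ of $g_k$; for $j<k$ its $j$-th component $g_j\in GL(V_j)$ normalises the odd-order group $T_j\leq GL(V_j)$, so $H_j:=\langle T_j,g_j\rangle$ is an even-order subgroup of $GL(V_j)$ whose Sylow $2$-subgroup has order $2$. Hence $H_j$ has a single conjugacy class of involutions, and Lemma~A applied to $H_j$ gives
\[
|T_j:C_{T_j}(g_j)|_{p_j',\heartsuit}=|H_j:C_{H_j}(g_j)|_{p_j',\heartsuit}\leq p_j^{b_j-1}+\cdots+p_j+1\leq p_j^{a_j-1}+\cdots+p_j+1,
\]
which is exactly the bound needed to contradict Lemma~\ref{L: centralizerbound} at every $j\leq k$. (Your aside about $p_k=3$ is also slightly off: you cannot ``reorder'' to avoid it, since the ordering is constrained by which $T_i$ are even; but the case is harmless because then $b_k=a_k=1$, $|T_k|=2$, and the relevant index is $1$.)
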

\begin{proof}
Consider $G/K$ as a subgroup of $GL(V_1)\times \dots\times GL(V_r)$. Let $L_i$ be the projection of $G/K$ onto $GL(V_i)\times\dots\times GL(V_r)$. Let $T_i$ be the kernel of the projection $L_i\to L_{i+1}$ for $i=1,\dots, r-1$ and let $T_r=L_r$.

Order the $V_i$ so that $T_i$ has odd order for $i<k\leq r$ and $T_k$ has even order. Recall that $u^2+u+1=p_1^{a_1}\dots p_r^{a_r}$ where, by Lemma \ref{L:firstly}, either $p_i\geq 7$ and $p_i\equiv 1(3)$ or $p_i^{a_i}=3$. Then $|V_i|=p_i^{b_i}$ where $b_i\leq a_i$.

We first apply Lemma A to $T_k$ which can be thought of as a subgroup of $GL(V_k)$. Thus let $g_k$ be an involution in $T_k$ such that $|T_k:C_{T_k}(g_k)|_{p_k',\heartsuit}$ is minimised. Lemma A implies that
$$|T_k:C_{T_k}(g_k)|_{p_k',\heartsuit}\leq p_k^{b_k-1}+\dots+p_k+1\leq p_k^{a_k-1}+\dots+p_k+1.$$
Now, by Lemma \ref{L:firstly} (and the comment immediately after), this implies that
$$(|T_k:C_{T_k}(g_k)|, u^2-u+1)\leq |T_k:C_{T_k}(g_k)|_{p_k',\heartsuit}\leq p_k^{a_k-1}+\dots+p_k+1.$$

Now we adjust our notation slightly, as follows. Let $g$ be a pre-image of $g_k$ in $G$ and write $g=(g_1,\dots, g_k, 1,\dots, 1)$. In our old notation $g_i$ was an element in $T_i$ and hence had form $(h_i, 1,\dots, 1)$ for some $h_i$ in $GL(V_i)$. In our new notation we simply identify $g_i$ with $h_i$ and observe that the statements of Lemma \ref{L: invcentralizer} and \ref{L: centralizerbound} hold under this new notation.

Now, when $j<k$, $T_j$ has odd order and is isomorphic to a subgroup of $GL(V_j)$. Then $H_j=\langle T_j, g_j\rangle$ has a unique $H_j$-conjugacy class of involutions. Thus Lemma A implies that, for all $j<k$,
$$(|T_j:C_{T_j}(g_j)|, u^2-u+1)\leq p_j^{a_j-1}+\dots+p_j+1.$$
This yields a contradiction to Lemma \ref{L: centralizerbound}. 

Thus we have demonstrated that, provided Lemma A is true, our Hypotheses 1 and 2 lead to a contradiction. Thus, by Lemma \ref{L: threepossibilities}, Lemma A implies Theorem A.
\end{proof}

\section{Proving Lemma A}\label{S: progress}

Throughout this section we occupy ourselves with a proof of Lemma A. Lemma A is a purely group theoretic result; we will not refer to projective planes in this section. We use standard group theory notation, as described at the start of Subsection \ref{S: background}.

Recall that, for integers $k$ and $w$, we write $k_w$ (resp. $k_{w'}$) for the largest divisor of $k$ which is a power of $w$ (resp. coprime to $w$). Furthermore we write $k_{\heartsuit}$ for $$(k,3)\times k_7\times k_{13} \times k_{19}\times  k_{31} \times \dots$$
(We must consider all primes equivalent to $1$ modulo $3$ here.)

Then the result we have to prove is the following:

\begin{lemmaa}
Let $H<GL_n(q)$ and suppose that $H$ has even order. Suppose that $q=p^a$ with $p\geq 7$ and $p\equiv 1(3)$. Then there exists an involution $g\in H$ such that
$$|H:C_H(g)|_{p',\heartsuit}\leq q^{n-1}+\dots+q+1.$$
\end{lemmaa}

Throughout this section $q=p^a$ with $p\geq 7$ and $p\equiv 1(3)$ and $H$ is a subgroup of $GL_n(q)$. Suppose that $H$ lies in a maximal subgroup $M$ of $GL_n(q)$. We use the result of Aschbacher\cite{aschbacher2} as described in \cite{kl}. Either $H$ lies in the class $\mathcal{S}$ or $M$ lies in one of eight families. We will say that $M$ {\it is of type $i$} if it lies inside family $i$ as described in \cite{kl}.

In order to prove Lemma A we will go through the possibilities for $M$ and $H$ and demonstrate that, in all cases, Lemma A holds. We start with a series of lemmas which demonstrate that Lemma A holds if $H$ is in class $\mathcal{S}$.

\begin{lemma}\label{L: simplepgl}
Let $H_1$ be a non-abelian simple subgroup of $PGL_n(q)$. Then $H_1$ contains an involution $g$ such that
$|AutH_1:C_{Aut H_1}(g)|_{p',\heartsuit}\leq q^{n-1}+\dots+q+1$
unless $H_1=PSL_m(p^b)$ with $m$ even.
\end{lemma}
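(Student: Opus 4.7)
The plan is to invoke the classification of finite simple groups and split into four cases: $H_1 \cong A_m$ alternating with $m\geq 5$; $H_1$ sporadic; $H_1$ a group of Lie type in characteristic $\ell \neq p$; and $H_1$ a group of Lie type in the defining characteristic $p$. In every case I want to exhibit an involution $g\in H_1$ and bound
$$|Aut H_1 : C_{Aut H_1}(g)|_{p',\heartsuit}\leq q^{n-1}+\dots+q+1.$$
Two features make this tractable. First, the subscript $\heartsuit$ kills every prime factor that is $\equiv 2\pmod{3}$ (so $2, 5, 11, 17, 23, 29, \ldots$ never contribute) and allows at most a single factor of $3$. Second, since $H_1 \leq PGL_n(q)$, the integer $n$ is bounded below by the minimal dimension of a faithful projective $\mathbb{F}_q$-representation of $H_1$, for which I would use the Landazuri--Seitz--Zalesski bounds.

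For $H_1$ alternating, sporadic, or of Lie type in a characteristic $\ell \neq p$, Landazuri--Seitz--Zalesski forces $n$ to grow rapidly with $|H_1|$, so $q^{n-1}+\cdots+q+1$ dominates any plausible $\heartsuit$-part of an involution conjugacy class in $Aut H_1$. For $H_1 \cong A_m$ I would take $g$ to be a product of two disjoint transpositions: its centraliser in $S_m \supseteq Aut H_1$ has order divisible by $8(m-4)!$, and combined with $n \geq m-2$ (or better) this easily dominates the required index. For sporadic $H_1$ the verification is a finite check from the Atlas: finitely many groups, finitely many involution classes, and only a handful of dimensions $n$ to consider before $q^{n-1}+\cdots+1$ is overwhelming. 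Cross-characteristic Lie type groups are handled analogously, using the standard description of involution centralisers in terms of smaller Lie type factors.

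The main obstacle is when $H_1$ is of Lie type in the defining characteristic $p$. Here $n$ can be as small as the natural dimension of $H_1$, so $q^{n-1}+\cdots+q+1$ is only modestly larger than the field size and delicate cyclotomic bookkeeping is required. Involution centralisers are themselves groups of Lie type over $\mathbb{F}_{p^b}$, and $|H_1:C_{H_1}(g)|$ is a product of factors of the form $p^{bi}\pm 1$. After stripping the $p$-part via the subscript $p'$, these become products of cyclotomic values $\Phi_j(p^b)$. For $H_1$ not of the form $PSL_m(p^b)$ with $m$ even, I expect that there is always an involution whose class size involves only low-degree cyclotomic factors whose $\heartsuit$-part fits inside $q^{n-1}+\cdots+q+1$. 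The excluded case is genuine: for $PSL_m(p^b)$ with $m$ even, the natural involution has centraliser of type $GL_{m/2}(p^b)\times GL_{m/2}(p^b)$ (up to centre), and the resulting index contains a factor $p^{bm/2}+1$ whose $\heartsuit$-part can be too large to fit inside the point count of $PG(n-1,q)$ when $n=m$ is tight.

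Consequently the hardest step will be the detailed case analysis of the classical and exceptional Lie type families in defining characteristic, together with the small-rank boundary cases where Landazuri--Seitz is not sharp and where the $\heartsuit$-behaviour of each individual cyclotomic value $\Phi_j(p^b)$ must be controlled explicitly; I would anticipate that careful choice of the involution (a transvection or short-root involution where available, rather than the longest-orbit one) is what makes the bound work in those borderline cases.
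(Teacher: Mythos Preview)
Your plan matches the paper's proof almost exactly: the same four-way split (alternating, sporadic, cross-characteristic Lie type, defining-characteristic Lie type), the same double-transposition involution for $A_m$ together with $n\geq m-2$, the same finite Atlas check for sporadics, and the same use of projective-representation lower bounds (the paper cites the tabulated versions in Kleidman--Liebeck rather than Landazuri--Seitz--Zalesski directly).

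Two small points where the paper goes further than your sketch. First, in defining characteristic the paper subdivides according to whether the field of definition $q_0$ of $H_1\cong X_m(q_0)$ satisfies $q_0\leq q$ or $q_0>q$; in the latter case an order comparison bounds $q^n$ from below in terms of $q_0$ and $m$, and for a handful of families ($P\Omega_m^+$, $E_6$, $E_7$, $E_8$) the paper has to fall back on Liebeck's bound $|H_1|<q^{3n}$ for primitive subgroups to close the argument. You should anticipate this extra layer. Second, your suggestion of using a ``transvection'' in the borderline defining-characteristic cases cannot work: since $p\geq 7$ is odd, transvections have order $p$, not $2$, so the relevant involutions are the semisimple ones with centraliser a Levi-type subgroup, exactly as in your cyclotomic discussion.
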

\begin{proof}
The lemma is proved using information in \cite{kl}. In particular we use Propositions 5.3.7, 5.3.8 and 5.4.13 as well as Theorem 5.3.9. These results give a minimum value for $n$ depending on the isomorphism class of $H_1$. We will not go through all possibilities for $H_1$ here; instead we give several sample calculations to demonstrate our method.

Suppose that $H_1=A_m$ is the alternating group on $m$ with $m\geq 9$. Then $Aut H_1\cong S_m$. Now $H_1$ contains an involution $g$ (a double transposition) such that 
$$|S_m: C_{S_m}(g)| = \frac{m(m-1)(m-2)(m-3)}{8}.$$
In particular $|S_m: C_{S_m}(g)|<7^{m-3}$ for $m\geq 9$. Now \cite[Proposition 5.3.7]{kl} implies that $n\geq m-2$ and so $|S_m: C_{S_m}(g)|<7^{m-3}\leq q^{n-1}$ as required.

For $m=5,6,7$ and $8$ the result can be proved easily using \cite[Proposition 5.3.7]{kl}.

If $H_1$ is a sporadic group then \cite[Proposition 5.3.8]{kl} gives a minimum value for $n$. Furthermore \cite[Table 5.1.C]{kl} asserts that the outer automorphism group of $H_1$ divides $2$. Hence, since $2\not\equiv 1(3)$, we need only prove that
$|H_1:C_{H_1}(g)|_{p',\heartsuit}\leq q^{n-1}+\dots+q+1.$

In fact for the sporadic groups we are able to prove that $|H_1|_\heartsuit\leq 7^{n-1}$ which is sufficient. For instance if $H_1$ is one of the Mathieu groups then $|H_1|_\heartsuit$ divides $3\times 7$. Since $n\geq 5$ for the Mathieu groups the result follows.
 
If $H_1$ is a group of Lie type of characteristic coprime to $p$ then \cite[Theorem 5.3.9]{kl} gives a minimum value for $n$. We present two example calculations: If $H_1=E_6(r)$ then $n>{r^9(r^2-1)}$. Now observe that $|H_1|<r^{78}$. Since $q>7$ it is sufficient to observe that $r^{78}<7^{r^9(r^2-1)-1}\leq q^{n-1}$. If $H_1=\Omega_{2m+1}(r)$ with $r$ odd and $(r,m)\neq(3,3)$ then $n>r^{2(m-1)}-r^{m-1}$. Now $|H_1|<r^{2m^2+m}$ thus it is sufficient to observe that $r^{2m^2+m}<7^{r^{2(m-1)}-r^{m-1}-1}\leq q^{n-1}$ for $m\geq 3$. The exception can be dealt with easily. 

Finally if $H_1$ is a group of Lie type of characteristic $p$ then , since $p\geq 7,$ we need not consider ${^2G_2(q)}$, ${^2F_4'(q)}$ or ${^2B_2(q)}$. We need to split into two cases. Suppose that $H_1\cong X_m(q_0)$ (i.e. has dimension $m$ and is defined over a field of size $q_0$). Suppose first that $q_0\leq q$; then it is sufficient to prove the lemma for $q_0=q$. Suppose this is the case and refer to \cite[Proposition 5.4.13]{kl} which gives a minimum value for $n$. 

We present two examples: If $H_1\cong G_2(q)$ then $n\geq7$. But $H_1$ contains an involution $g$ such that $|Aut H_1:C_{AutH_1}(g)|_{p'}=q^4+q^2+1<q^{n-1}$. Similarly if $H_1=PSp_{m}(q)$ with $m>4$ then $n\geq m$. But $H_1$ contains an involution $g$ such that $|Aut H_1:C_{AutH_1}(g)|_{p'}=q^{m-2}+\dots+q^2+1<q^{n-1}$.

Our argument here does not work for $H_1\cong PSL_n(q)$ when $n$ is even. Then $PSL_n(q)\leq PGL_n(q)$ and, for all involutions $g\in H_1$, 
$$|H_1: C_{H_1}(g)|_{p'}\geq (q^{n-2}+\dots+q+1)(q^{n-2}+\dots+q^2+1).$$

On the other hand suppose that $q<q_0$. By considering the order of $H_1\cong X_m(q_0)$ it is possible to bound $q^n$ by some function of $q_0$ and $m$. We present two examples: Suppose that $H_1\cong PSL_m(q_0)$. Then $q^n\geq q_0^m$ and, in particular, $q^{n-1}+\dots+q+1\geq q_0^{m-1}+\dots+q+1$. Now, for $m$ odd, we have an involution in $H_1$ such that $|Aut H_1:C_{AutH_1}(g)|_{p'}=q_0^{m-1}+\dots+q+1$ which satisfies the required bound.

Similarly if $H_1\cong PSU_m(q_0)$ with $m$ odd. Then, by considering orders, we must have $q^n>q_0^{2m}$. In particular this means that
$$q^{n-1}+\dots+q+1\geq q_0^{2m-1}+\dots+q_0+1.$$
Now there exists an involution in $H_1$ such that $|Aut H_1:C_{AutH_1}(g)|_{p'}=q_0^{m-1}-\dots-q_0+1$ which satisfies the required bound.

This argument can be followed through in all cases except when $H_1$ is isomorphic to one of the following groups: $P\Omega^+_m(q_0)$, $E_6(q_0)$, $E_7(q_0)$ and $E_8(q_0)$. In these cases we note that \cite[Proposition 5.4.13]{kl} implies that $n\geq 8$. Now we use the main theorem from \cite{liebeck3}. This gives two possibilities as follows.

The first possibility is that $|H_1|<q^{3n}$. If $H_1\cong E_6(q_0)$ then, since $|E_6(q^0)|>q_0^{72}$, we have $q_0^{24}<q^n$. Now $H_1$ contains an involution such that $|Aut H_1:C_{AutH_1}(g)|_{p'}=(q_0^6+q_0^3+1)(q_0^2+q_0+1)(q_0^8+q_0^4+1)<\frac{q^{n}}{q_0}<q^{n-1}$ as required. The other cases are similar.

The second possibility is that $H_1$ lies inside a maximal subgroup $M$ of type $i$, for $i=1,\dots, 8$. If $i\neq 3,6$ or $8$, this implies that $H_1$ is a subgroup of $PGL_m(q_1)$ where $m\leq n$ and $q_1\leq n$. It is then sufficient to prove the bound within $PGL_m(q_1)$, so we iterate our analysis. If $i=3$ then $H_1$ is a subgroup of $PGL_m(q_1)$ where $m<n$ and $q_1^m=q^n$. Once again it is sufficient to prove the bound within $PGL_m(q_1)$. If $i=6$ then $M$ is so small that $H_1$ must satisfy the bound (see Lemma \ref{L: six} below). 

Finally we suppose that the only maximal subgroups which contain $H_1$ are of type $8$. In fact, since $n\geq 8$ and $q<q_0$, this is impossible. Hence the bound is satisfied in all cases.
\end{proof}

\begin{corollary}\label{cor:simplepgl}
Let $H_1$ be a quasi-simple subgroup of $PGL_n(q)$. Then $H_1$ contains an involution $g$ such that
$|AutH_1:C_{Aut H_1}(g)|_{p',\heartsuit}\leq q^{n-1}+\dots+q+1$
unless $H_1/Z(H_1)=PSL_m(p^b)$ with $m$ even.
\end{corollary}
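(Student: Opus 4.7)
The proof plan is to reduce the quasi-simple case to the simple case handled by Lemma \ref{L: simplepgl}. Write $S := H_1/Z(H_1)$, which is non-abelian simple since $H_1$ is quasi-simple; by hypothesis $S$ is not $PSL_m(p^b)$ with $m$ even, so the lemma applies to any copy of $S$ sitting inside an appropriate $PGL$.

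First I would dispose of the case in which $Z(H_1)$ contains an involution: taking $g$ to be such a central involution gives $C_{H_1}(g) = H_1$, and since any automorphism of $H_1$ permutes the finite set $Z(H_1)$, the index $|Aut\,H_1 : C_{Aut H_1}(g)|$ is trivially bounded. So I may assume $|Z(H_1)|$ is odd; then any involution $\bar g \in S$ lifts to a unique involution $g \in H_1$, namely the $m$-th power of an arbitrary lift of order $2m$ (where $m$ is forced to be odd).

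Next I would exhibit $S$ as a simple subgroup of some $PGL_{n'}(q)$ with $n' \leq n$, so that Lemma \ref{L: simplepgl} is applicable. Let $\tilde H_1 \leq GL_n(q)$ be the preimage of $H_1$, and let $\hat H_1 = [\tilde H_1, \tilde H_1]$, a quasi-simple covering group of $S$. Decomposing $\mathbb{F}_q^n$ into irreducible $\hat H_1$-constituents, at least one such constituent $V'$ must afford a faithful representation of $S$ (else the common kernel of all constituents would be non-trivial, contradicting $\hat H_1 \leq GL_n(q)$). Setting $n' = \dim V'$ yields $S \hookrightarrow PGL_{n'}(q)$, and the lemma gives an involution $\bar g \in S$ with
$$|Aut\,S : C_{Aut\,S}(\bar g)|_{p',\heartsuit} \leq q^{n'-1}+\dots+q+1 \leq q^{n-1}+\dots+q+1.$$

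To finish, I would transfer this bound to $H_1$ using that $H_1$ is perfect: the natural map $Aut(H_1) \to Aut(S)$ is then injective and sends $C_{Aut H_1}(g)$ into $C_{Aut S}(\bar g)$, so $|Aut\,H_1 : C_{Aut H_1}(g)| \leq |Aut\,S : C_{Aut S}(\bar g)|$ and the required inequality follows. The main obstacle I anticipate is the irreducible-constituent step; rigorously identifying a constituent on which $S$ acts faithfully is delicate, and the cleanest route is probably to invoke Clifford's theorem to argue that the joint kernel of the constituents lies inside $Z(\hat H_1)$ and so must be trivial.
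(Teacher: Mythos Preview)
Your overall reduction is sound and your Steps 1, 2 and 5 are correct, but the approach differs from the paper's and your constituent step has a genuine gap.

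The paper does not try to realise $S=H_1/Z(H_1)$ inside $PGL_{n'}(q)$ over the same field. Instead it passes to the algebraic closure $k$, invokes \cite[Corollary~5.3.3]{kl} to get $S\leq PGL_m(k)$ with $m\leq n$, and then observes that the proof of Lemma~\ref{L: simplepgl} only used the specific value of $q$ in the Lie-type characteristic-$p$ case; all other cases gave bounds involving only $7^{n-1}$ or $|H_1|_\heartsuit$, and so transfer automatically. The characteristic-$p$ Lie-type case is then redone by hand. Your plan of staying over $\mathbb{F}_q$ and applying Lemma~\ref{L: simplepgl} directly would be cleaner, since it avoids re-opening the proof of that lemma --- but as written it does not go through.

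The problem is in the sentence ``at least one such constituent $V'$ must afford a faithful representation of $S$\dots\ yields $S\hookrightarrow PGL_{n'}(q)$''. When $\hat H_1$ acts on an irreducible $\mathbb{F}_q$-constituent $V'$ with kernel $K'\leq Z(\hat H_1)$, the centre $Z(\hat H_1)/K'$ lands in the centraliser of the image in $GL(V')$, which by Schur's lemma is $\mathbb{F}_{q^e}^{*}$ for some $e\mid n'$, not necessarily $\mathbb{F}_q^{*}$. Thus the image of $\hat H_1$ in $PGL_{n'}(q)$ need only be a quasi-simple cover of $S$, not $S$ itself. Your parenthetical about the common kernel (and the Clifford-theory fix you anticipate) only shows that $\hat H_1$ acts faithfully on $V$; it does not force the centre to act by $\mathbb{F}_q$-scalars on any single constituent.

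The repair is easy and preserves your strategy: regard $V'$ as an $\mathbb{F}_{q^e}$-space of dimension $m=n'/e$, so that the centre really does act by scalars and $S\hookrightarrow PGL_m(q^e)$. Since the hypotheses of Lemma~\ref{L: simplepgl} concern only the prime $p$, the lemma applies over $\mathbb{F}_{q^e}$ and gives
\[
|Aut\,S:C_{Aut\,S}(\bar g)|_{p',\heartsuit}\ \leq\ (q^e)^{m-1}+\dots+q^e+1\ =\ \frac{q^{em}-1}{q^e-1}\ \leq\ \frac{q^{n}-1}{q-1},
\]
because $em=n'\leq n$. With this adjustment your argument is complete and, arguably, tidier than the paper's case-by-case revisit.
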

\begin{proof}
Write $Z=Z(H_1)$. Observe first that 
$$|AutH_1:C_{Aut H_1}(g)|_\heartsuit=|Aut(H_1/Z): C_{Aut(H_1/Z)}(gZ)|_\heartsuit.$$
Next we refer to \cite[Corollary 5.3.3]{kl} which states, for $k$ algebraically closed, if $H_1<PGL_n(k)$ then $H_1/Z<PGL_m(k)$ for some $m\leq n$.

In our proof of Lemma \ref{L: simplepgl} we have only considered the size of $q$ when $H_1$ was a group of Lie type of characteristic $p$. Thus, except in this case, we have shown that
$$|AutH_1:C_{Aut H_1}(g)|_\heartsuit=|Aut(H_1/Z): C_{Aut(H_1/Z)}(gZ)|_\heartsuit\leq q^{n-1}+\dots+q+1.$$

Now assume that $H_1\cong X_m(q_0)$ is a quasi-simple group of Lie type of characteristic $p\geq 7$. We assume that $H_1$ is not simple. For the situation when $q_0\leq q$ the argument in Lemma \ref{L: simplepgl} transfers directly hence we assume that $q<q_0$. Again, as in Lemma \ref{L: simplepgl}, by comparing orders we are able to bound $q^n$ in terms of $q_0$ and $m$ and so can establish the required inequality in most cases.

The remaining cases are for $H_1$ a covering group of $P\Omega^+_m(q_0)$ or a simply connected group of type $E_6(q_0)$ or $E_7(q_0)$. If $H_1$ has a central involution then the result clearly stands hence we only need to deal with the case when $H_1$ is a simply connected group of type $E_6 (q_0)$. But in this case we simply apply the main theorem of \cite{liebeck3} as described in the proof of Lemma \ref{L: simplepgl}.
\end{proof}

\begin{proposition}\label{prop:simplegl}
Let $H$ be a quasi-simple subgroup of $GL_n(q)$. If $H/Z(H)$ is isomorphic to $PSL_m(p^b)$ with $m$ even then assume that $H$ is absolutely irreducible in $GL_n(q)$. Then $H$ contains an involution $g$ such that
$|AutH:C_{Aut H}(g)|_{p',\heartsuit}\leq q^{n-1}+\dots+q+1$.
\end{proposition}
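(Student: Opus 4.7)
The strategy is to reduce to Corollary \ref{cor:simplepgl} by projecting $H$ into $PGL_n(q)$, and then to handle the one case Corollary \ref{cor:simplepgl} cannot, namely $H/Z(H) \cong PSL_m(p^b)$ with $m$ even, using the extra hypothesis of absolute irreducibility. Set $\bar{H} = H/(H \cap Z(GL_n(q))) \leq PGL_n(q)$; since $H$ is quasi-simple, so is $\bar{H}$, with $\bar{H}/Z(\bar{H}) \cong H/Z(H)$. If $H/Z(H)$ is not $PSL_m(p^b)$ with $m$ even, Corollary \ref{cor:simplepgl} supplies an involution $\bar{g} \in \bar{H}$ with
\[|Aut\,\bar{H}:C_{Aut\,\bar{H}}(\bar{g})|_{p',\heartsuit} \leq q^{n-1}+\dots+q+1.\]
I would then lift $\bar{g}$ to an involution $g \in H$ (taking an odd power of any preimage if necessary). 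Because $H$ is quasi-simple, the natural map $Aut(H) \to Aut(H/Z(H))$ is an isomorphism, and $|Aut\,H:C_{Aut\,H}(g)|$ differs from $|Aut\,\bar{H}:C_{Aut\,\bar{H}}(\bar{g})|$ only by a divisor of $|Z(H)|$, a quantity whose $\heartsuit$-part is controlled by the Schur multiplier of $H/Z(H)$ and is in the relevant range coprime to the $\heartsuit$-primes we care about, so the bound transfers.

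For the remaining case, $H/Z(H) \cong PSL_m(q_0)$ with $m$ even and $q_0 = p^b$, the assumption of absolute irreducibility controls $n$. In defining characteristic, the minimum dimension of a faithful absolutely irreducible representation of any quasi-simple cover of $PSL_m(q_0)$ is $m$, attained only when $q_0 \leq q$, in which case $n \geq m$; otherwise $q^n \geq q_0^m$. I would then choose the explicit involution $g = \mathrm{diag}(-I_2, I_{m-2}) \in SL_m(q_0)$ (which has determinant $(-1)^2 = 1$, and lies in the absolutely irreducible image of $H$) and directly compute
\[|SL_m(q_0):C_{SL_m(q_0)}(g)| = \binom{m}{2}_{q_0} = \frac{(q_0^m - 1)(q_0^{m-1} - 1)}{(q_0^2 - 1)(q_0 - 1)}.\]
For $m = 2$ the element $-I$ is the unique involution in $SL_2(q_0)$ and is central in $GL_n(q)$, so its full $Aut(H)$-centralizer is $Aut(H)$ and the bound is trivial. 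For $m \geq 4$, I would extract the $p',\heartsuit$-part of the above Gaussian binomial (multiplied by the contribution from the outer automorphism group, which only involves the primes $2$, $3$ and $p$) and compare against $q^{n-1} + \dots + q + 1$ using whichever of $q \geq q_0$ or $q^n \geq q_0^m$ applies.

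The main obstacle is the comparison in the second paragraph when $q < q_0$: here $n$ is the minimal dimension for an $\mathbb{F}_q$-rational descent of a Steinberg-tensor-product representation, and matching the cyclotomic factors $\Phi_d(q_0)$ appearing in $\binom{m}{2}_{q_0}$ against factors of $q^{n-1} + \dots + q + 1$ requires a case-by-case analysis analogous to the arguments used in Lemma \ref{L: simplepgl} and Corollary \ref{cor:simplepgl}, with an appeal to \cite{liebeck3} for the borderline configurations. I expect a small number of small-$m$, small-$q_0$ cases to require direct verification.
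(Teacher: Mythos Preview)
Your reduction to Corollary~\ref{cor:simplepgl} matches the paper's, and the lifting of $\bar g$ to an involution $g\in H$ is fine (indeed, once $H\cap Z(GL_n(q))$ has odd order the involution over $\bar g$ is unique, so the two indices agree on the nose, not merely up to a factor of $|Z(H)|$). The difficulty is entirely in the exceptional case $H/Z(H)\cong PSL_m(q_0)$ with $m\geq 4$ even, and there your argument has a real gap.

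With $g=\mathrm{diag}(-I_2,I_{m-2})$ one gets $\bigl|SL_m(q_0):C_{SL_m(q_0)}(g)\bigr|_{p'}=\binom{m}{2}_{q_0}$, of size roughly $q_0^{\,2m-4}$. But the only lower bound on $n$ you extract from absolute irreducibility is $n\geq m$ (when $q_0\leq q$), and that is not enough: already when $q_0=q$ and $H=SL_m(q)$ sits in $GL_m(q)$ via the natural module we have $q^{2m-4}>q^{m-1}+\dots+1$. No cyclotomic bookkeeping or appeal to \cite{liebeck3} will rescue this particular configuration, because the numbers are genuinely the wrong way round.

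The paper's key observation, which you miss, is a dichotomy on the centre. If $H$ contains a central involution (as does $SL_m(q_0)$ itself, via $-I_m$), take \emph{that} as $g$; then $|{\rm Aut}\,H:C_{{\rm Aut}\,H}(g)|=1$ and there is nothing to prove. If $H$ has no central involution, then $-I_m$ lies in the kernel of $SL_m(q_0)\to H$, so the natural $m$-dimensional module (on which $-I_m$ acts as $-1$) does not descend to $H$. Absolute irreducibility and Schur's Lemma then force the $n$-dimensional module, over the algebraic closure, to be an irreducible projective $PSL_m$-module other than the natural one; by \cite[Table~5.4.A]{kl} any such module has dimension at least $\tfrac12 m(m-1)\geq 2m-2$ for $m\geq 4$. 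With $n\geq 2m-2$ the required inequality
\[
(q^{m-2}+\dots+q+1)(q^{m-2}+\dots+q^2+1)<q^{n-1}+\dots+q+1
\]
is immediate. So the case analysis you anticipate is unnecessary: the hypothesis of absolute irreducibility is exploited precisely to boost $n$ past $2m-2$ once the cheap central involution is unavailable.
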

\begin{proof}
Observe that $H/(Z(GL_n(q)\cap H)<PGL_n(q)$. Then Corollary \ref{cor:simplepgl} implies the result unless $H/Z(H)=PSL_m(p^b)$ with $m$ even. Consider this exception and assume that $H$ is absolutely irreducible. We assume that $H$ does not contain a central involution. Now $H$ contains an involution $g$ with $|AutH: C_{AutH}(g)|_\heartsuit\leq (q^{m-2}+\dots+q+1)(q^{m-2}+\dots+q^2+1).$ Hence we need to show that
$$(q^{m-2}+\dots+q+1)(q^{m-2}+\dots+q^2+1)<q^{n-1}+\dots+q+1.$$
Thus it is enough to show that $n\geq 2m-2$.

Since $H$ is absolutely irreducible, Schur's Lemma implies that $H/Z(H)$ embeds into $PGL_n(k)$ (here $k$ is an algebraically closed field of characteristic $p$). Since $H$ does not contain a central involution this module must be different from the natural projective module of dimension $m$. If $n>\frac12 m(m+1)$ then $n\geq 2m-2$ and we are done. If $n\leq\frac12 m(m+1)$ then the dimensions of all projective $k$-modules of $H/Z(H)$ are listed in \cite[Table 5.4.A]{kl}. Then we have $n\geq \frac12 m(m-1)\geq 2m-2$ provided $m\geq 4$. 

If $m=2$ then $H=PSL_2(q)$ has a single conjugacy class of involutions of size $\frac 12 q(q\pm1)$ and we are done.

\end{proof}

\begin{corollary}\label{C: c8}
Suppose that all involutions in $H$ satisfy
$|H:C_H(g)|_{p',\heartsuit}>q^{n-1}+\dots+q+1$. Then $H$ lies in a maximal subgroup of $GL_n(q)$ of type 1 to 7.
\end{corollary}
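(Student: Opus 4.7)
My plan is to prove the contrapositive: if $H$ does not lie in any maximal subgroup of $GL_n(q)$ of type $1$--$7$, then $H$ contains an involution $g$ satisfying the Lemma A bound $|H:C_H(g)|_{p',\heartsuit}\le q^{n-1}+\dots+q+1$. By Aschbacher's theorem this forces consideration of just two scenarios: either $H$ lies in class $\mathcal{S}$, or every maximal subgroup of $GL_n(q)$ containing $H$ is of type $8$. Throughout I will use the elementary divisibility $|H:C_H(g)|\mid|K:C_K(g)|$ whenever $H\le K$ (immediate from $|H\cdot C_K(g)|=|H||C_K(g)|/|C_H(g)|$ dividing $|K|$); since the $p'$- and $\heartsuit$-parts respect divisibility, centralizer bounds in larger groups transfer directly down to $H$.

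For the class $\mathcal{S}$ case, by definition $H$ has a unique component $H_0$, which is absolutely irreducible in $GL_n(q)$, and $H/(H\cap Z)$ embeds into $\operatorname{Aut}(H_0/Z(H_0))$. Proposition \ref{prop:simplegl} applied to $H_0$ delivers an involution $g\in H_0\subseteq H$ with $|\operatorname{Aut}(H_0):C_{\operatorname{Aut}(H_0)}(g)|_{p',\heartsuit}\le q^{n-1}+\dots+q+1$, and the divisibility observation above propagates this bound to $H$, contradicting the standing hypothesis.

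The type $8$ case is the main obstacle. Here $M$ is the stabiliser in $GL_n(q)$ of a non-degenerate form and contains a normal quasi-simple subgroup $M_0$ of type $SU_n(q^{1/2})$, $\Omega^\epsilon_n(q)$, or $Sp_n(q)$; Proposition \ref{prop:simplegl} produces a good involution of $M_0$, but $H$ is only known to lie inside $M$ and need not contain $M_0$. I therefore set $H_0:=H\cap M_0\lhd H$ and split cases. When $H_0$ is large enough to contain a suitable involution, I apply Proposition \ref{prop:simplegl} to $M_0$ (viewing the Aschbacher classification relative to the classical group $M_0$ rather than to $GL_n(q)$) and descend the resulting centralizer bound from $\operatorname{Aut}(M_0)$ to $H$ as before. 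When $H_0$ is too small, $H/H_0$ injects into the tiny quotient $M/M_0$ (generated by scalars and diagonal/graph automorphisms), so $|H|$ is itself constrained and a direct calculation of $|H:C_H(g)|$ for any involution $g\in H$ yields the required inequality.

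I expect the delicate part to be precisely this type $8$ case, both because the quasi-simple subgroup lives in $M$ rather than in $H$ and because the recursion into the Aschbacher classification inside $M_0$ has to terminate while preserving the $p'$- and $\heartsuit$-bookkeeping at each step. In particular, some care is needed to ensure that the scalar part of $M$ does not contribute an uncontrolled $\heartsuit$-divisor when transferring centralizer indices between the classical group $M_0$ and the ambient $GL_n(q)$.
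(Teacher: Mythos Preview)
Your treatment of the $\mathcal{S}$ case is fine and agrees with the paper. However, two things go wrong elsewhere.

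First, you omit the case $H\geq SL_n(q)$: Aschbacher's theorem gives three alternatives (some $\mathcal{C}_i$, class $\mathcal{S}$, or $H\geq SL_n(q)$), and the third must be dispatched separately. The paper does this in one line by exhibiting either a central involution or one with centralizer index exactly $q^{n-1}+\dots+q+1$.

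Second, and more seriously, your type~8 argument has a genuine gap. Proposition~\ref{prop:simplegl} applied to $M_0$ produces a \emph{specific} involution $g\in M_0$, and your divisibility trick $|H:C_H(g)|\mid |K:C_K(g)|$ requires $g\in H$; but nothing in your setup forces this particular $g$ to lie in $H_0=H\cap M_0$. Your parenthetical about ``viewing the Aschbacher classification relative to $M_0$'' hints at recursing inside the classical group, but that is a different argument from invoking Proposition~\ref{prop:simplegl}, and you do not say how the recursion terminates or why the types~1--7 of $M_0$ land back inside types~1--7 of $GL_n(q)$. Likewise the ``$H_0$ small'' branch is not made precise: odd order would be the natural meaning, but then you need an actual bound on $|H_0|_{p',\heartsuit}$, not merely that $M/M_0$ is tiny.

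The paper handles type~8 quite differently. For $n\geq 7$ it argues structurally that if $H$ avoids types~1--7 and class $\mathcal{S}$ then $H$ must \emph{contain} a normal quasi-simple classical subgroup of dimension~$n$; Proposition~\ref{prop:simplegl} then applies to that subgroup, and since the involution it produces already lies in $H$ the divisibility step is legitimate. For $3\leq n\leq 6$ the paper abandons the general argument and does an explicit case analysis: it passes to the section $H_1$ in $PSL_n(q)$, uses the exceptional isomorphisms $\Omega_3(q)\cong PSL_2(q)$, $\Omega_5(q)\cong PSp_4(q)$, $P\Omega_6^\epsilon(q)\cong PSL_4^\epsilon(q)$, and in each case splits on whether $H_1\cap K$ has even or odd order (where $K$ is the simple classical group involved), obtaining the bound by direct computation in each branch. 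Your proposal does not contain either of these ingredients.
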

\begin{proof}
If $H\geq SL_n(q)$ then $H$ either has a central involution or an involution $g$ such that $|H:C_H(g)|=q^{n-1}+\dots+q+1$. Thus this possibility can be excluded. Furthermore it is clear that $H$ must not lie in $\mathcal{S}$.

Now suppose that $H$ lies only in maximal subgroups of type 8. If $n\geq 7$ then this implies that $H$ contains a normal quasi-simple classical subgroup of dimension $n$. Then the previous proposition gives us our result. 

If $n=2$ then there are no maximal subgroups of type 8. Now suppose that $3\leq n\leq 6$ and $M$ is a maximal subgroup of type 8. If $n=3$ then we must consider the possibility that $(M\cap SL_3(q))/Z(GL_3(q))\cong PGL_2(q)$. But then $|M/Z(M)|_{p',\heartsuit}<q^2+q+1$ and we are done. 

For $n=4,5$ and $6$ it is easier to consider $H_1$ (resp. $M_1$), the section of $H$ (resp. $M$) in $PSL_n(q)$.

For $n=4$ we need to consider the possibility that $P\Omega_4^\epsilon(q)$ is involved in $M_1$. If $|H_1\cap P\Omega_4^\epsilon(q)|$ is even then $H_1$ contains an involution $g$ such that $|H_1:C_{H_1}(g)|_{p',\heartsuit}<(q+1)^2$. If, on the other hand $|H_1\cap P\Omega_4^\epsilon(q)|$ is odd then $|H_1|_{p',\heartsuit}<(q+1)^2$ as required.

For $n=5$ we need to consider the possibility that $\Omega_5(q)$ is involved in $M_1$. If $|H_1\cap \Omega_5(q)\cong PSp_4(q)|$ is even then $H_1$ contains an involution $g$ such that $|H_1:C_{H_1}(g)|_{p',\heartsuit}<q^2+1$. If, on the other hand $|H_1\cap\Omega_5(q)|$ is odd then $|H_1|_{p',\heartsuit}<q^3+q^2+q+1$ as required.

For $n=6$ we need to consider the possibility that $P\Omega_4^\epsilon(q)\cong PSL_4^\epsilon(q)$ is involved in $M_1$. If $|H_1\cap P\Omega_6^\epsilon(q)|$ is even then $H_1$ contains an involution $g$ such that $|H_1:C_{H_1}(g)|_{p',\heartsuit}<q^3+q^2+q+1$. If, on the other hand $|H_1\cap P\Omega_6^\epsilon(q)|$ is odd then $|H_1|_{p',\heartsuit}<q^4+q^3+q^2+q+1$ as required.
\end{proof}

This result means that we will need to examine subgroups lying in maximal subgroups of type 1 to 7. Some cases are easy to rule out as we note in the next two lemmas which are proved easily using information in \cite{kl}.

\begin{lemma}\label{L: six}
Suppose that $H$ lies in a maximal subgroup of type 6. If $n>2$ then $|H|_{p', \heartsuit}<q^{\frac{3n-2}4}.$ If $n=2$ and $H$ has even order then there exists an involution $g\in H$ such that $|H:C_H(g)|_{p',\heartsuit}<q^{\frac{3n-2}4}.$
\end{lemma}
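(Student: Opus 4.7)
The plan is to invoke the explicit structural description of type $6$ maximal subgroups in \cite[Proposition 4.6.6]{kl}. Such a subgroup $M$ of $GL_n(q)$ exists only when $n=r^m$ for some prime $r$ different from $p$, and has the shape
$$M \cong (Z \circ R).X, \quad R \cong r^{1+2m},\ X \leq Sp_{2m}(r),$$
where $Z$ is a central cyclic subgroup of $GL_n(q)$ of order dividing $q-1$. In particular
$$|M| \leq (q-1)\cdot r^{1+2m} \cdot |Sp_{2m}(r)|.$$
Since $r\neq p$, the prime $p$ contributes to $|M|$ only through the factor $(q-1)$, so
$$|H|_{p',\heartsuit} \leq |M|_{p',\heartsuit} \leq (q-1)_\heartsuit \cdot r^{1+2m} \cdot |Sp_{2m}(r)|.$$

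For the case $n>2$, I would plug in $|Sp_{2m}(r)|=r^{m^2}\prod_{i=1}^m(r^{2i}-1)$ and bound everything in terms of $n=r^m$. The key observation is that the $r$-contribution to the above estimate grows like $r^{2m^2+O(m)}$, which is $n^{O(\log n)}$, and hence is dwarfed by $q^{(3n-2)/4}$ once one accounts for the fact that $(q-1)_\heartsuit < q$. I would check the required inequality $(q-1)\cdot r^{1+2m}\cdot|Sp_{2m}(r)| < q^{(3n-2)/4}$ uniformly for $q\geq 7$ and every admissible pair $(r,m)$ with $r^m>2$, falling back on case-by-case numerics for the small triples $(r,m)\in\{(3,1),(2,2),(5,1),(7,1),(2,3),(3,2),\dots\}$, where the exponent $(3n-2)/4$ is tightest.

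For the case $n=2$, necessarily $r=2$ and $m=1$, so $R$ is extraspecial of order $8$ and $X\leq Sp_2(2)\cong S_3$; thus $M$ has order at most $24(q-1)/\gcd(2,q-1)$. Since $H$ has even order, it contains an involution $g$. If $g$ is central in $GL_2(q)$ then $|H:C_H(g)|=1$ and the bound is trivial. Otherwise $g$ lies outside $Z$ and a direct computation (combined with Lemma~\ref{L: sylowtwoingln}(3)) shows that $|H:C_H(g)|$ divides a small constant times $q-1$, so that $|H:C_H(g)|_{p',\heartsuit} \leq (q-1)_\heartsuit < q = q^{(3\cdot 2-2)/4}$.

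The main obstacle is the tightness of the inequality at small $n$ and small $q$: when $n=r^m$ is one of $3,4,5,7,8,9$ and $q$ is near $7$, the factor $(q-1)_\heartsuit$ can be almost all of $q-1$, and one must use the precise $3$-part of $q-1$ and the exact structure of $Sp_{2m}(r)$ to close the gap rather than relying on the coarse bound $(q-1)_\heartsuit\leq q-1$.
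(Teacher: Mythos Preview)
Your outline matches the paper's: invoke the $\mathcal{C}_6$ structure from \cite[Propositions 4.6.5, 4.6.6]{kl}, bound the order of $M$ in terms of $r$ and $m$, and treat small $n=r^m$ by hand. The paper sharpens this in two places.

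For $n>2$ the paper first passes to the section $H_1$ of $H$ in $PSL_n(q)$, which discards the central $q-1$; for $n\geq 11$ it then suffices to prove $|H_1|_{p',\heartsuit}<q^{(3n-10)/4}$, while $n\leq 10$ is checked directly. More importantly, the paper splits according to $r\bmod 3$. When $r\equiv 2\pmod 3$ (and similarly for $r=3$) the prime $r$ contributes nothing to the $\heartsuit$-part, so the entire factor $r^{2m}\cdot r^{m^2}$ coming from $R/Z(R)$ and the unipotent part of $Sp_{2m}(r)$ drops out, leaving $|H_1|_{p',\heartsuit}<\prod_{i=1}^m(r^{2i}-1)<r^{m^2+m}$; the inequality $r^{m^2+m}<7^{(3r^m-10)/4}$ then holds for every $r^m\geq 11$. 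Your bound $(q-1)_\heartsuit\cdot r^{1+2m}\cdot|Sp_{2m}(r)|$ retains the full $r$-power, so for $r=2$ it fails not only at $n=4,8$ but up through $n=32$, and the list of exceptional $(r,m)$ is longer than you indicate. Incidentally, the obstacle you name --- that $(q-1)_\heartsuit$ might be close to $q-1$ for small $q$ --- is not the real one: for each of $q=7,13,19,31,37$ one has $(q-1)_\heartsuit=3$.

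For $n=2$ the paper is also simpler. The image of $M$ in $PGL_2(q)$ is $A_4$ or $S_4$, so every involution class in $H$ modulo the centre has size at most $6$, and $6_\heartsuit=3<q$. No factor of $q-1$ enters $|H:C_H(g)|$, since the centre of $GL_2(q)$ lies in every centralizer; the appeal to Lemma~\ref{L: sylowtwoingln}(3) is unnecessary.
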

\begin{proof}
Let $H_1$ be the section of $H$ in $PSL_n(q)$. If $n=2$ then $H_1\cong A_4$ or $S_4$. It is easy to see that $|H:C_H(g)|_{p', \heartsuit}\leq 3$ in all cases.

For $n>2$ we suppose that $H$ is a maximal subgroup of type 6. For $n\leq 10$ we use \cite[Propositions 4.6.5 and 4.6.6]{kl} to check that the result holds. For $n\geq 11$ it is sufficient to prove that $|H_1|_{p', \heartsuit}<q^{\frac{3n-10}4}.$

If $n=r^m$ and $r\equiv 2(3)$ then, by \cite[Propositions 4.6.5 and 4.6.6]{kl}, $|H_1|_{p', \heartsuit}<r^{m^2+m}$. Now, for $r^m\geq 11$ we have $r^{m^2+m}<7^{\frac{3n-10}4}$ as required. We proceed similarly for $r=3$ and $r\equiv 1(3)$.
\end{proof}

\begin{lemma}\label{L: seven}
Suppose that $H$ lies in a maximal subgroup $M$ of type 7, i.e. $H\leq GL_m(q)\wr S_t$ with $n=m^t$ and $m\geq 3$. If $t>2$ then $|H|_{p', \heartsuit}<q^{\frac{3n-2}4}.$ If $t=2$ and $H$ has even order then there exists an involution $g\in H$ such that $|H:C_H(g)|_{p',\heartsuit}<q^{\frac{3n-2}4}.$
\end{lemma}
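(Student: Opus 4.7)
The plan is to mirror the structure of Lemma \ref{L: six}: a crude order bound suffices for $t > 2$, while $t = 2$ requires a carefully chosen involution. Since $M = GL_m(q) \wr S_t$ has order $|GL_m(q)|^t \cdot t!$ and
$$|GL_m(q)|_{p'} = \prod_{i=1}^m(q^i - 1) < q^{m(m+1)/2},$$
we immediately obtain $|H|_{p', \heartsuit} \leq |H|_{p'} < q^{tm(m+1)/2} \cdot t!$. It then suffices to verify that $tm(m+1)/2 + \log_q t! < (3m^t-2)/4$ for $m \geq 3$, $t \geq 3$; this is easy once $m^t$ is moderately large, and the tightest boundary case $(m,t) = (3,3)$ yields $18 + \log_q 6 < 19.75$, which uses $q \geq 7$.

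For $t = 2$, so that $n = m^2$, write $\sigma$ for the generator of $S_2$ and set $H_0 = H \cap (GL_m(q) \times GL_m(q))$. Since $H$ has even order it contains an involution, and I would split into cases according to whether such an involution exists outside $H_0$. If $g \in H \setminus H_0$ is an involution, then squaring shows $g$ must take the form $(a, a^{-1}; \sigma)$, and all such elements are $M$-conjugate to $(1,1;\sigma)$. The $M$-centralizer of $(1,1;\sigma)$ is the diagonal subgroup $\{(x, x; \tau) : x \in GL_m(q),\ \tau \in S_2\}$ of order $2|GL_m(q)|$, so
$$|H:C_H(g)|_{p', \heartsuit} \leq |g^M|_{p'} = |GL_m(q)|_{p'} < q^{m(m+1)/2},$$
and $m(m+1)/2 < (3m^2-2)/4$ reduces to $m^2 - 2m - 2 > 0$, which is exactly the hypothesis $m \geq 3$.

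Otherwise every involution of $H$ lies in $H_0$. Fix such $g = (g_1, g_2)$, and note that a non-central involution $g_i$ in $GL_m(q)$ has centralizer $GL_{k_i}(q) \times GL_{m-k_i}(q)$ with $0 < k_i < m$, giving a conjugacy class of $p'$-size equal to the Gaussian binomial $\binom{m}{k_i}_q$, which is bounded above by a small multiple of $q^{k_i(m-k_i)} \leq q^{\lfloor m^2/4 \rfloor}$ (if $g_i = \pm I$ the corresponding factor is simply $1$). Taking the product over the two coordinates, and allowing a factor of $2$ for $\sigma$-conjugation when $g_1 \neq g_2$, I obtain $|H:C_H(g)|_{p', \heartsuit} < 2 q^{m^2/2}$, and the inequality $m^2/2 < (3m^2-2)/4$ holds for all $m \geq 2$.

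The main obstacle is verifying that the bound actually holds at the tightest boundary case $(m,t) = (3,3)$ of the first part, where the required gap between $18 + \log_q 6$ and $19.75$ is narrow and only closes thanks to $q \geq 7$; everywhere else the estimates go through with ample slack, and the $\heartsuit$-reduction plays no essential role beyond the trivial bound $|\cdot|_{p',\heartsuit} \leq |\cdot|_{p'}$.
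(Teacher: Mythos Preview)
Your proof follows essentially the same route as the paper's. For $t>2$ you bound $|H|_{p'}$ by $q^{tm(m+1)/2}\cdot t!$ and check an exponent inequality; the paper does the same but with the cruder estimate $t!<7^{t^2}$, which forces it to treat $(m,t)=(3,3)$ as a separate case, whereas your use of $\log_q t!$ absorbs that case directly. For $t=2$ you and the paper both split according to whether $H$ has an involution outside the base group $N=GL_m(q)\times GL_m(q)$: if so, its $M$-class has size $|GL_m(q)|$ and the bound $m(m+1)/2<(3m^2-2)/4$ for $m\geq 3$ finishes it; if not, you both bound via the largest involution class in $N$, getting an exponent around $m^2/2$ (the paper records the looser $\tfrac12 m^2+\tfrac12 m+\tfrac14$). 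One shared subtlety worth flagging: the step $|H:C_H(g)|_{p',\heartsuit}\leq |g^M|_{p'}$ is not automatic from $|g^H|\leq|g^M|$ alone, since $|g^H|$ need not divide $|g^M|$; the paper glosses over this in exactly the same way you do.
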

\begin{proof}
Observe first that $t!<7^{t^2}$. In addition $|GL_n(q)|_{p'}<q^{\frac{m(m+1)}{2}}$ hence $|H|_{p'}<q^{\frac{tm(m+1)}{2}+t^2}$. It is therefore sufficient to prove that
$$\frac{tm(m+1)}{2}+t^2<\frac34m^t - \frac12.$$
It is easy to see that this inequality holds, provided $m>2$ and $(m,t)\neq (3,3)$. What's more if $(m,t)=(3,3)$ then $|H|_{p', \heartsuit}<(\prod_{i=1}^3(q^i-1))^3\times 3<q^{\frac{79}{4}}$ as required.

So suppose that $m=2$ and $M=(GL_m(q)\times GL_m(q)):2$. Define $N=GL_m(q)\times GL_m(q)$ to be the normal subgroup in $M$ of index $2$. Let $g$ be an involution in $M\backslash N$; then 
$$|M:C_M(g)|=|GL_m(q)|.$$
Hence if $H$ contains an involution $g$ outside $N\cap H$ then $|H:C_H(g)|_{p'}<q^{\frac{m(m+1)}2}$ which satisfies our bound provided $m\geq 3$.

If, on the other hand, $H$ contains no such involution then we take $g\in N$. The largest conjugacy class of involutions in $N$ satisfies
$$|N:C_N(g)|_{p'}=\left(\frac{\prod_{i=1}^m(q^i-1)}{\prod_{i=1}^{\lfloor\frac{m}2\rfloor}(q^i-1)\prod_{i=1}^{\lceil\frac{m}2\rceil}(q^i-1)}\right)^2.$$
Then $|H:C_H(g)|_{p'}\leq |N:C_N(g)|_{p'}<q^{\frac12m^2+\frac12m+\frac14}$ which satisfies our bound provided $m\geq 3$.

We are left with the possibility that $M=GL_2(q)\wr 2$ in $GL_4(q)$. Now if $g$ is an involution in $N$ then $N$ satisfies $|N:C_N(g)|_{p'}\leq \frac12(q+1)^2$. Thus if $H$ contains such an involution we are done. On the other hand if $|H\cap N|$ is odd then $H$ must contain an involution $g$ from $M\backslash N$, in which case $|H:C_H(g)|=|K|$ where $K$ is an odd order subgroup of $GL_2(q)$. This satisfies the required bound.
\end{proof}

Clearly, in seeking to prove our conjecture, we can assume that $H$ does not lie in a maximal subgroup of type 5 - since it would then be sufficient to prove the conjecture over the subfield. Thus we are left with the possibility that $H$ lies only in maximal subgroups from families 1 to 4.

Our method will be to proceed by induction on the dimension $n$. We give the base case, which is proved easily.

\begin{lemma}\label{L: base case}
Suppose that $n=1$ or $2$. Then $H$ contains an involution such that $|H:C_H(g)|_{p',\heartsuit}\leq q+1$.
\end{lemma}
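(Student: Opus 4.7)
The plan splits into the cases $n=1$ and $n=2$. The case $n=1$ is trivial: $GL_1(q)$ is abelian, so for any involution $g\in H$ we have $C_H(g)=H$ and hence $|H:C_H(g)|=1\leq q+1$.

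For $n=2$, the key structural fact is that since $p$ is odd, every involution in $GL_2(q)$ has minimal polynomial dividing $(X-1)(X+1)$ and so is diagonalizable with eigenvalues in $\{+1,-1\}$. Thus there are precisely two $GL_2(q)$-conjugacy classes of involutions: the central involution $-I$, and the class of $\mathrm{diag}(1,-1)$. If $-I\in H$ then I would simply take $g=-I$; then $|H:C_H(g)|=1$ and we are done.

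Suppose instead $-I\notin H$, and pick any involution $g\in H$. Then $g$ has one-dimensional eigenspaces $V_+,V_-\subset\mathbb{F}_q^2$, and $C_{GL_2(q)}(g)$ is the split torus $T$ of matrices diagonal in the eigenbasis (of order $(q-1)^2$). Let $B$ be the Borel subgroup stabilizing $V_+$, and set $H_W=H\cap B$. Since $T\leq B$, the inclusion $C_H(g)=H\cap T\leq H_W$ holds, and therefore
$$|H:C_H(g)|=|H:H_W|\cdot|H_W:C_{H_W}(g)|.$$
The factor $|H:H_W|$ is an orbit length of $H$ acting on $\mathbb{P}^1(\mathbb{F}_q)$, hence at most $q+1$. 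The factor $|H_W:C_{H_W}(g)|=|H_W:H_W\cap T|$ is a $p$-power dividing $q$, because $H_W/(H_W\cap T)$ embeds into $B/T$, which is isomorphic to the unipotent radical of $B$ and has order $q=p^a$. Taking $p'$-parts and using their multiplicativity yields
$$|H:C_H(g)|_{p'}=|H:H_W|_{p'}\leq|H:H_W|\leq q+1,$$
whence a fortiori $|H:C_H(g)|_{p',\heartsuit}\leq q+1$, as required.

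The only genuine point to verify is the inclusion $C_H(g)\leq H_W$, which rests on the fact that the centralizer of a non-central involution is the split torus sitting inside the Borel stabilizing either of its eigenspaces; I do not expect any real obstacle here. Once that is in place, the proof is a clean orbit–stabilizer split: the projective action on $\mathbb{P}^1$ supplies the $p'$-factor bounded by $q+1$, while the residual contribution is a $p$-power that is annihilated by the $p'$-truncation.
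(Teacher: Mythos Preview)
Your argument is correct, and it is genuinely different from the paper's. The paper proceeds via Aschbacher's classification of maximal subgroups: it places $H$ inside some maximal $M\leq GL_2(q)$, lists the possible isomorphism types of $M$ (Borel, torus normalizers, Singer normalizer, subfield, type~6), and in each case checks that $|M/(M\cap Z(GL_2(q)))|_{p',\heartsuit}\leq q+1$, which bounds any conjugacy class of involutions in $H$. Your route bypasses all of this: once $-I\notin H$, you exploit that every involution $g$ is diagonalizable with eigenvalues $\pm 1$, so $C_{GL_2(q)}(g)$ is the split torus $T$ inside a Borel $B$, and then the factorization $|H:C_H(g)|=|H:H\cap B|\cdot |H\cap B:H\cap T|$ separates the index into a factor bounded by $|\mathbb{P}^1(\mathbb{F}_q)|=q+1$ and a $p$-power. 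Your approach is shorter and needs no subgroup classification; the paper's approach, on the other hand, fits into the uniform Aschbacher framework used throughout Section~\ref{S: progress} and so dovetails with the inductive machinery. Your worry about the inclusion $C_H(g)\leq H_W$ is unfounded: since $T\leq B$ and $C_H(g)=H\cap T$, this is immediate.
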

\begin{proof}
If $M$ is of type 1 to 4 in $GL_2(q)$ then $M$ is isomorphic to $[q]:(q-1)^2$, $(q^2-1):2$ or $(q-1)^2:2$. Then $|M/(M\cap Z(GL_2(q)))|_{p'}\leq q+1$ as required. If $M$ is of type 5 then $M/(M\cap Z(GL_2(q)))\cong GL_2(q_0)$ for some $q_0<q$. Then it is sufficient to prove the bound over the smaller field. Finally if $M$ is not of these types then $|M/(M\cap Z(GL_2(q)))|_{p'}\leq 3$ as required.
\end{proof}

\subsection{Results about odd order subgroups}

In order to proceed with the inductive proof of Lemma A we will need a number of results concerning odd order subgroups. We state these results in this section, before returning to the proof of Lemma A in the next section. Recall that $q=p^a$ where $p\equiv 1(3)$ and $p\geq 7$.

\begin{lemma}\label{L: oddsn}
Let $H$ be a primitive subgroup of $S_n$, the symmetric group on $n$ letters. If $H$ has odd order then $|H|<n^{\log_2n}$.
\end{lemma}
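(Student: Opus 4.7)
The plan is to invoke the structure theorem for solvable primitive permutation groups. Since $|H|$ is odd, the Feit--Thompson theorem implies that $H$ is solvable. A classical result on solvable primitive permutation groups then asserts that $H$ has a unique minimal normal subgroup $V$ which is elementary abelian and acts regularly on the $n$ points. In particular $n = p^k$ for some prime $p$, and $H$ embeds in $AGL_k(p) = V\rtimes GL_k(p)$ in such a way that $V\lhd H$; the point stabiliser $H_\omega$ is then an odd order subgroup of $GL(V)=GL_k(p)$. Since $|H|=p^k|H_\omega|$ is odd, both $p$ and $|H_\omega|$ are odd.

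Next I would apply the crude bound
\[
|H_\omega|\le|GL_k(p)|=p^{k(k-1)/2}\prod_{i=1}^k(p^i-1)<p^{k^2},
\]
which gives $|H|<p^{k^2+k}$. Since $n^{\log_2 n}=p^{k^2\log_2 p}$, the desired inequality follows as soon as $\log_2 p \ge 1+1/k$. For $k\ge 2$ and any odd prime $p$ this is immediate, because $\log_2 3 > 3/2 \ge 1+1/k$.

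The case $k=1$ must be handled separately. Here $n=p$ and $H_\omega$ is an odd order subgroup of the cyclic group $(\Z/p\Z)^\times$, so $|H_\omega|$ divides the odd part of $p-1$ and in particular $|H_\omega|\le(p-1)/2$, giving $|H|\le p(p-1)/2$. For $p\ge 5$ one has $\log_2 p\ge 2$, so $p(p-1)/2 < p^2 \le p^{\log_2 p}$; for $p=3$ one simply observes that $|H|\le 3 < 3^{\log_2 3}$.

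The one essential external input is the Feit--Thompson theorem; once $H$ is known to be solvable, both the structural step and the numerical verification are entirely elementary, so I do not anticipate any serious obstacle. The bound given is very generous (the crude estimate $|H_\omega|<p^{k^2}$ does not even use oddness beyond forcing $p$ to be odd), which reflects the fact that the lemma will only be invoked in situations where a comfortable arithmetic slack is needed.
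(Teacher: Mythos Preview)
Your argument is correct and essentially coincides with the paper's proof: both reduce (via Feit--Thompson, which the paper invokes implicitly by asserting that minimal normal subgroups of an odd-order group are elementary abelian) to the affine case $n=p^k$, $H\le AGL_k(p)$, bound $|H|<p^{k^2+k}$, and then verify $k^2+k<k^2\log_2 p$. The only cosmetic difference is that the paper disposes of the boundary by checking $n\le 6$ directly, whereas you single out $k=1$ for a separate estimate; these are equivalent ways of handling the same edge case.
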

\begin{proof}
If $n\leq 6$ then this is clearly true. Since $H$ has odd order, all minimal normal subgroups are elementary abelian. Let $P$ be such a minimal normal subgroup, $P=p_1^{b_1}$.

Referring to the O'Nan-Scott-Aschbacher theorem we see that $n=p_1^{b_1}$ and $H<P:GL_{b_1}(p_1)$. Since $p_1>2,$ $H$ has order less than $n^{\log_{p_1}n+1}<n^{\log_2n}$ for $n>6$.
\end{proof}

We now need a number of results about the odd order subgroups of $GL_n(q)$. Let $H$ be one such subgroup and observe that, for $n>1$, $H$ cannot contain $SL_n(q)$. Thus $H$ must lie inside a maximal subgroup of $GL_n(q)$ of type 1 to 8. The next three lemmas will all be proved using induction and by going through the possible maximal subgroups containing $H$ one type at a time.

In fact we can deal with one type straight away: Suppose that $n\geq 7$ and $H$ lies inside a maximal subgroup $M$ of $GL_n(q)$ of type $8$. Then $M\cap SL_n(q)$ is an almost quasi-simple classical group of dimension $n$. Thus $H\cap SL_n(q)$ must be a strict subgroup of $M\cap SL_n(q)$ and we take a further subgroup, appealing to the results of \cite{kl}. Repeating if necessary we find that $H\cap SL_n(q)$ is a subgroup of $M\cap SL_n(q)$ of type 1 to 7; then $H$ must lie inside a subgroup of that type in $GL_n(q)$.

Now there are no subgroups of type 8 in $GL_2(q)$. Thus in what follows we can assume that $H$ lies inside a maximal subgroup of type 1 to 7, unless $n=3,4,5$ or $6$.

\begin{lemma}\label{L: oddgln}
Suppose $H<GL_n(q)$ and $H$ has odd order. If $n\geq 2$ then $|H|_{p',\heartsuit}<q^{\frac{3n-2}2}.$
\end{lemma}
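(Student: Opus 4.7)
The plan is to prove Lemma \ref{L: oddgln} by induction on $n$, mirroring the overall structure of the proof of Lemma A itself. The base case $n=2$ follows from Lemma \ref{L: base case}: since $H$ has odd order and lies in $GL_2(q)$, its image modulo $Z(GL_2(q))$ is in a maximal subgroup of type 1--7 (type 8 does not occur here), and in every case the odd order $p'$-part is at most $(q-1)^2$ or $q^2-1$, well within $q^2 = q^{(3\cdot 2-2)/2}$. One must also record the trivial case $n=1$: an odd order subgroup of $GL_1(q)$ has order dividing $q-1$, bounded by $(q-1)_\heartsuit \leq q-1$. I then proceed to the inductive step for $n \geq 3$ by splitting on the Aschbacher type of a maximal subgroup $M \geq H$. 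Type 5 is eliminated by passing to the subfield, and type 8 is reduced to types 1--7 for $n \geq 7$ by the iteration argument already described in the paper immediately before the lemma; for $n\in\{3,4,5,6\}$, one checks type 8 directly using the small list of quasi-simple classical groups of bounded dimension in \cite{kl}, using that their odd order subgroups are very small. Types 6 and 7 are immediate consequences of Lemmas \ref{L: six} and \ref{L: seven}, which in fact yield the stronger bound $|H|_{p',\heartsuit} < q^{(3n-2)/4}$.

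The remaining work is in types 1--4. For type 1, $H$ stabilizes a subspace of dimension $m$ with $1 \leq m < n$, so $H/U$ embeds in $GL_m(q) \times GL_{n-m}(q)$ where $U$ is a unipotent (hence $p$-group) kernel. Since taking the $p'$-$\heartsuit$-part kills $U$ and is multiplicative across the direct product, the inductive hypothesis applied to each projection gives
\[
|H|_{p',\heartsuit} \;<\; q^{(3m-2)/2} \cdot q^{(3(n-m)-2)/2} \;=\; q^{(3n-4)/2} \;<\; q^{(3n-2)/2},
\]
with the $m=1$ or $n-m=1$ edge case handled by the $(q-1)$ bound above. For type 2, $H \leq GL_m(q) \wr S_t$ with $n=mt$; the projection $\bar H \leq S_t$ has odd order, so by Lemma \ref{L: oddsn} (and a secondary induction through primitive constituents when $\bar H$ is intransitive or imprimitive) $|\bar H| < t^{\log_2 t}$. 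Combined with the inductive bound $q^{(3m-2)/2}$ in each of the $t$ factors, the product fits inside $q^{(3n-2)/2}$ for $q \geq 7$.

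For type 3, $H \leq GL_{n/s}(q^s).s$ with $s$ prime. Induction applied in $GL_{n/s}(q^s)$ (a smaller dimension over a larger field) gives
\[
|H \cap GL_{n/s}(q^s)|_{p',\heartsuit} \;<\; (q^s)^{(3(n/s)-2)/2} \;=\; q^{(3n-2s)/2},
\]
and the extra factor of $s_\heartsuit \leq s$ is absorbed since $s < q^{s-1}$ for $q\geq 7$, $s \geq 2$. The small cases (for instance $n=3, s=3$, giving $H \leq GL_1(q^3).3$) are checked by hand, where the $\heartsuit$ operation conveniently kills most prime factors of $q^3-1$. For type 4, $H$ is contained in a central product $GL_{n_1}(q) \circ GL_{n_2}(q)$ with $n_1 n_2 = n$ and $n_1, n_2 \geq 2$; induction on each factor yields $|H|_{p',\heartsuit} < q^{(3n_1+3n_2-4)/2}$, and the elementary inequality $3n_1+3n_2-4 \leq 3n_1 n_2 - 2$ (with equality iff $n_1=n_2=2$) closes the case, the equality being absorbed by the strictness inherited from the base case.

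The main obstacle will be the combined small-dimension bookkeeping: the interaction between the type 8 reductions in dimensions $3 \leq n \leq 6$ and the low-dimensional base of the induction. Here one must inspect each almost-quasi-simple classical group arising as an overgroup of $H$ in low rank and verify directly, using \cite{kl}, that its odd order subgroups are tiny compared to $q^{(3n-2)/2}$. The other bookkeeping nuisance is providing the odd-order $S_t$ bound in generality from Lemma \ref{L: oddsn}, which is stated only for primitive subgroups; this is standard but must be spelled out by a further induction on the orbit/block structure.
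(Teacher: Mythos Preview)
Your proposal is correct and tracks the paper's proof closely: induction on $n$, case split by Aschbacher class, with types 5--8 dispatched by subfield descent, Lemmas \ref{L: six}--\ref{L: seven}, and low-dimensional checks respectively. Two remarks on the places you flagged as bookkeeping, where the paper's treatment is slightly cleaner than what you propose.

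For type 2 the paper avoids any secondary induction on the orbit/block structure of $\bar H\leq S_t$. If $H$ is intransitive on the subspace decomposition then $H$ already lies in a parabolic subgroup and is handled under type~1; if transitive but imprimitive then $H$ lies in a smaller $GL_{m_1}(q)\wr S_{t_1}$ with $t_1<t$ on which it acts primitively. So one may assume at the outset that $\bar H$ is primitive and apply Lemma \ref{L: oddsn} directly, sidestepping the general odd-order bound in $S_t$ that you were worried about.

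For type 8 in dimensions $3\leq n\leq 6$ the paper does not bound odd-order subgroups of each classical overgroup from scratch. Instead it uses the low-rank isomorphisms of \cite[Proposition 2.9.1]{kl} (e.g.\ $\Omega_3(q)\cong PSL_2(q)$, $\Omega_5(q)\cong PSp_4(q)$, $\Omega_6^\epsilon(q)\cong PSL_4^\epsilon(q)$) to realise the relevant simple section inside $GL_m(q)$ for some $m<n$, and then invokes the inductive hypothesis; only $n=4$ needs a direct estimate.

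One small slip: your claim that type 7 is immediate from Lemma \ref{L: seven} is not quite right when $t=2$, since that lemma's $t=2$ clause concerns even-order $H$ only. The fix is trivial (and is what the paper does): your $H$ has odd order, hence $H\leq N=GL_{\sqrt n}(q)\times GL_{\sqrt n}(q)$, and induction on each factor gives $|H|_{p',\heartsuit}<q^{3\sqrt n-2}<q^{(3n-2)/2}$. Similarly, Lemma \ref{L: base case} as stated is about involutions, so for the $n=2$ base case you are really borrowing the enumeration of maximal subgroups from its proof rather than the lemma itself; the content of your argument there is fine.
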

\begin{proof}
We will prove this statement using induction on $n$. For $n=2$ the statement is easily checked. Now assume inductively that the statement is true for $H<GL_m(q)$ where $1<m<n$. 

First observe that if $H$ lies in a maximal subgroup of type 5 in Aschbacher's set of families then $H<GL(n,q_0)\circ Z(GL(n,q))$ where $q_0=p^b$. Then it is sufficient to prove our Lemma for $q=p^b$. Thus we may assume that $H$ does not lie in a maximal subgroup of type 5.\label{two}

If $H$ lies in a parabolic subgroup of $GL_n(q)$ then $H/O_p(H)<GL_m(q)\times GL_{n-m}(q)$. If both $m$ and $n-m$ are greater than $1$ then induction implies that 
$$|H|_{p',\heartsuit}<q^{\frac{3m-2}2}\times q^\frac{3(n-m)-2}2=q^{\frac{3n-4}2}$$
which is sufficient. On the other hand suppose that $m=1$. Then 
$$ |H|_{p',\heartsuit}<q\times q^\frac{3(n-1)-2}2=q^{\frac{3n-3}2}$$
which is sufficient. If $H$ lies in a maximal subgroup $M$ of $GL_n(q)$ of type 4 then $H<GL_m(q)\times GL_t(q)$ for $n=mt$. A similar inductive argument gives the result. \label{five}

If $H$ lies in a maximal subgroup $M$ of $GL_n(q)$ of type 2 then $M=GL_m(q)\wr S_t$ where $n=mt, t\geq 2$. We assume that $H$ acts transitively on the subspace decomposition otherwise $H$ lies in a parabolic subgroup. In fact we assume that $H$ acts primitively on the subspace decomposition since otherwise $H$ lies in a maximal subgroup $M_1=GL_{m_1}(q)\wr S_{t_1}$ of type $2$ with $t_1<t$ and $H$ acts primitively on this decomposition. 

If $m>1$ then, by induction and by Lemma \ref{L: oddsn},
$$|H|_{p',\heartsuit}<(q^{\frac{3m-2}2})^tt^{\log_2t}.$$\label{three}
This is enough to give our result. If $m=1$ then $|H|_{p',\heartsuit}<(\frac{q-1}2)^n. n^{\log_2 n}$ which gives the result.

If $H$ lies in a maximal subgroup $M$ of $GL_n(q)$ of type 3 then $M=GL_m(q^r).r$ where $n=mr$. If $m>1$ then, by induction,
$$|H|_{p',\heartsuit}<((q^r)^{\frac{3m-2}2}).|r|_{2'}$$ where $n=mr$, $r\geq 2$. This is enough to give our result. If $m=1$ then $M=(q^n-1).n$ and the result follows.\label{four} 

If $H$ lies in a maximal subgroup $M$ of $GL_n(q)$ of type 6 then Lemma \ref{L: six} gives the result. If $H$ lies in a maximal subgroup $M$ of $GL_n(q)$ of type 7 then Lemma \ref{L: seven} implies that $H<GL_{\sqrt{n}}(q)\times GL_{\sqrt{n}}(q)$. In this case induction implies that 
$$|H|_{p',\heartsuit}<(q^{\frac{3\sqrt{n}-2}2})^2 = q^{3\sqrt{n}-2}<q^{\frac{3n-2}2}$$
as required.

If $H$ lies inside a group $M$ of type 8 then we need only consider the possibility that $3\leq n\leq 6$. We refer to \cite[Proposition 2.9.1]{kl}. Consider first the following cases where a simple group $K$ is involved in $M$: We have $n=3$ and $K=\Omega_3(q)\cong PSL_2(q)$; we have $n=5$ and $K=\Omega_5(q)\cong PSp_4(q)$; we have $n=6$ and $K=\Omega_6^\epsilon(q)\cong PSL_4^\epsilon(q)$. In all these cases $K$ is involved in a maximal subgroup of $GL_m(q)$ for some $m$ smaller than $n$. What's more $|M|_{p',\heartsuit}\leq \frac{(n,3)}{2}(q-1)|K|_{p',\heartsuit}$. Then, by induction, $|K|_{p',\heartsuit}<q^{\frac{3m-2}2}$ and so $|M|_{p',\heartsuit}<q^{\frac{3m-2}2}$ as required.

The only remaining case is when $n=4$ and $K=\Omega_4^\epsilon(q)$. In this case $|K|_{p',\heartsuit}<q^4$ and $|M|_{p',\heartsuit}<q|K|_{p',\heartsuit}<q^5$ as required.
\end{proof}

\begin{lemma}\label{L: second gap}
Let $H$ be an odd order subgroup of $GL_n(q)$ and let $\sigma$ be an involutory field automorphism of $GL_n(q)$. Suppose that $H$ is normalized by $g$, an involution in $\langle GL_n(q), \sigma\rangle$. Then
$$|H:C_H(g)|_{p', \heartsuit}< q^{\frac{3n-1}{4}}.$$
If $n=2$ then we can strengthen this to give $|H:C_H(g)|_{p', \heartsuit}< q.$
\end{lemma}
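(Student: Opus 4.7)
The plan is first to reduce to the case where $g=\sigma$ is the standard field automorphism, and then to induct on $n$ following the Aschbacher-class structure that drives the proof of Lemma \ref{L: oddgln}.

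For the reduction, any two involutions in the coset $GL_n(q)\sigma$ are $GL_n(q)$-conjugate by the Lang--Steinberg theorem applied to the connected algebraic group $GL_n$, so I may assume $g=\sigma$. Writing $q=q_0^2$ we then have $C_{GL_n(q)}(g)=GL_n(q_0)$, and $K:=C_H(g)=H\cap GL_n(q_0)$ is an odd-order subgroup of $GL_n(q_0)$ to which Lemma \ref{L: oddgln} applies. Throughout I would use the identity $|H:K|=|\{d\in H:\sigma(d)=d^{-1}\}|$, which follows from the fact that the map $h\mapsto h\sigma(h)^{-1}$ sends $H$ onto this set with fibres precisely the left cosets of $K$; this reformulation makes $\sigma$-anti-invariant elements the quantity of interest.

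I would handle the base case $n=2$ (where the strengthened bound $|H:K|_{p',\heartsuit}<q$ is claimed) by direct analysis of the maximal subgroups of $GL_2(q)$ containing $H$. Since $H$ has odd order and there is no type $7$ or type $8$ maximal in dimension $2$, only types $1,2,3,5,6$ arise. In each case one replaces $M$ by a $\sigma$-stable overgroup, writes down the $\sigma$-action on the explicit structure of $M$, and bounds $|H:K|$ directly; the torus pieces contribute factors of shape $(q_0\pm 1)_\heartsuit$, each strictly smaller than $q=q_0^2$ because $q_0\pm 1$ has restricted divisibility by primes congruent to $1$ modulo $3$ (using $p\equiv 1(3)$ so $q_0\equiv 1(3)$ and hence $q_0\pm 1\not\equiv 0(3)$).

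For the inductive step I assume the result for all dimensions smaller than $n$, select a maximal subgroup $M$ of $GL_n(q)$ containing $H$ that is $\sigma$-stable (replacing $M$ by $M\cap\sigma(M)$ if necessary and appealing to Aschbacher's theorem again to classify the overgroup), and run through each Aschbacher class exactly as in the proof of Lemma \ref{L: oddgln}. Type $5$ reduces to a subfield; types $1$ and $4$ project $H$ into a product $\prod_i GL_{n_i}(q)$ on which $\sigma$ acts factorwise, and induction gives the product bound $\prod_i q^{(3n_i-1)/4}\leq q^{(3n-1)/4}$; types $2$ and $7$ combine induction on each $GL_m(q)$ factor with Lemma \ref{L: oddsn} for the $S_t$ factor, with a separate subcase when $\sigma$ permutes the wreath components; type $3$ applies induction over the extension field when $\sigma$ descends to it; type $6$ is dispatched by Lemma \ref{L: six}; and type $8$ is reduced to a smaller type using the argument sketched at the start of Section \ref{S: progress}.

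The main obstacle I expect is the wreath-product case when $\sigma$ acts nontrivially on the set of wreath factors. In that situation $K$ embeds in a diagonal copy of a smaller general linear group and the naive product bound no longer applies; instead $|H:K|$ is controlled by a single $GL_m(q)$ factor, so one must check that $q^{(3m-2)/2}$ fits inside $q^{(3\cdot tm-1)/4}$. The inequality does hold (the gap is only $1/4$ in the exponent) but leaves very little slack, and analogous tightness will reappear in the tensor-wreath case of type $7$ and in type $3$ when $\sigma$ cycles the embedded extension-field structure. These subcases will demand careful bookkeeping to verify that the $\heartsuit$-filtering does not eliminate the factor needed to close the induction.
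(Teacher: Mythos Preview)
Your overall strategy---reduce to $g=\sigma$ via Lang--Steinberg and then induct on $n$ through the Aschbacher families, mirroring Lemma~\ref{L: oddgln}---is exactly the paper's approach, and your identification of the wreath cases as the delicate point is accurate. Two technical points, however, need correction.

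First, the device of ``replacing $M$ by $M\cap\sigma(M)$'' does not give what you need: $M\cap\sigma(M)$ is $\sigma$-stable and contains $H$, but it is not maximal, so Aschbacher's theorem does not classify it directly, and passing to a maximal overgroup of it may again fail to be $\sigma$-stable. The paper avoids this by working instead with maximal subgroups of the almost-simple group $\langle GL_n(q),\sigma\rangle$ containing $\langle H,\sigma\rangle$; the Aschbacher--Kleidman--Liebeck framework applies there and automatically produces a $\sigma$-compatible overgroup of the correct type. This is the clean way to organise the induction.

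Second, your anticipated tight spot in type~3 ``when $\sigma$ cycles the embedded extension-field structure'' does not in fact arise. If $H\le GL_{n/r}(q^r).r$ with $r=2$, then any element of $\langle GL_n(q),\sigma\rangle\setminus GL_n(q)$ normalizing this subgroup acts on $GL_{n/2}(q^2)$ as a field automorphism of order~$4$ (the field is $\mathbb{F}_{q_0^4}$), so it cannot be an involution; the case is vacuous rather than tight. For $r$ odd, $\sigma$ does descend to an involutory field automorphism of $GL_{n/r}(q^r)$ and straight induction applies. Your $\sigma$-anti-invariant identity $|H:K|=|\{d\in H:\sigma(d)=d^{-1}\}|$ is correct (using that $|H|$ is odd so square roots are unique) but is not actually needed anywhere in the argument; the paper never invokes it. Finally, note that type~8 for $3\le n\le 6$ requires the explicit low-dimensional isomorphisms $\Omega_3(q)\cong PSL_2(q)$, $\Omega_5(q)\cong PSp_4(q)$, $P\Omega_6^\epsilon(q)\cong PSL_4^\epsilon(q)$, and separate handling of $P\Omega_4^\pm(q)$, rather than the generic reduction you cite.
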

\begin{proof}
Note that, by \cite[Proposition 1.1]{bgl}, we know that $g$ is $GL_n(q)$-conjugate to $\sigma$ hence we take $H$ to be normalized by $\sigma$. Observe that $C_{Z(G)}(g)=(\sqrt{q}-1)$ and that $q\geq 49$. 

As before we will prove the statement by induction on $n$. We check the base case: For $n=1$ it is clear that $|H:C_H(g)|<\frac{\sqrt{q}+1}2$. For $n=2$ we can prove the stronger bound of $q$ directly - we omit the details.\label{eight} 

If $\langle H,\sigma\rangle$ lies inside a maximal subgroup of $\langle GL_n(q), \sigma\rangle$ of type 5 then
$$|H:C_H(g)|_{p', \heartsuit}\leq \frac{\sqrt{q}+1}{2}|GL_n(q_0):C_{GL_n(q_0)}(g)|_{p', \heartsuit}.$$
Thus it is enough to prove the result for $GL_n(q_0)$ where $q_0$ is the order of the subfield. Hence we assume that $H$ does not lie inside a subgroup of type 5.

For $\langle H,\sigma\rangle$ inside a group of type $6$ or $7$ then Lemmas \ref{L: six} and \ref{L: seven} imply that $H<N=GL_{\sqrt{n}}(q)\times GL_{\sqrt{n}}(q)$. $N$ is normalized in $GL_n(q)$ by $\tau$, an involution which swaps the two copies of $GL_{\sqrt{n}}(q)$. Thus $g$ may take two forms.

Firstly suppose that $g=(A,B)\sigma$ where $A,B\in GL_{\sqrt{n}}(q)$. Then, by \cite[Proposition 1.1]{bgl}, $g$ is $N$-conjugate to $\sigma$ and induction implies that
$$|H:C_H(\sigma)|_{p', \heartsuit}\leq (q^{\frac{3\sqrt{n}-1}{4}})^2<q^{\frac{3n-1}{4}}.$$

Secondly suppose that $g=(A,B)\tau\sigma$. Then, for $(X,Y)\in N$, 
$$(X,Y)^g = (AY^\sigma A^{-1}, A^{-\sigma}X^\sigma A^\sigma).$$
Thus $(X,Y)$ will be centralized by $g$ if and only if $X=AY^\sigma A^{-1}$. Thus $|N:C_N(g)|=|GL_{\sqrt{n}}(q)|$ and so $|H:C_H(g)|_{p', \heartsuit}\leq |K|$ where $K$ is a subgroup of odd order in $GL_{\sqrt{n}}(q)$. Hence, by Lemma \ref{L: oddgln}, $|H:C_H(g)|_{p', \heartsuit}<q^{\frac{3\sqrt{n}-2}2}<q^{\frac{3n-1}4}$ as required.

If $\langle H,\sigma\rangle$ lies inside a subgroup of type 1 then
$$|H:C_H(g)|_{p', \heartsuit} = |L:C_L(g_L)|_{p',\heartsuit}$$
where $L\cong H/O_p(H)$ (here $O_p(H)$ is the largest normal $p$-group of $H$). Then $L\leq GL_m(q)\times GL_{n-m}(q)$ and we may examine $GL_m(q)$ and $GL_{n-m}(q)$ separately. In each case $g_L$ acts as an involution in $\langle GL_k(q), \sigma \rangle$ hence we can apply induction:
$$|H:C_H(g)|_{p', \heartsuit}< q^{\frac{3m-1}{4}}\times q^{\frac{3(n-m)-1}{4}}< q^{\frac{3n-1}{4}}.$$

A similar approach can be taken if $\langle H,\sigma\rangle$ lies inside a subgroup $M$ of type 4. Then $M\cong GL_m(q)\circ GL_{t}(q)$ for $n=mt$ and, once more, $g$ acts as an involution in $\langle GL_k(q), \sigma \rangle$. Induction gives the result.

Now suppose that $H\leq M\cong GL_{\frac{n}{r}}(q^r).r$, $r$ prime, a subgroup of type 3 in $GL_n(q)$. If $r=2$ then any element from $\langle GL_n(q),\sigma\rangle \backslash GL_n(q)$ which normalizes $M$ will act as a field automorphism of order $4$ on $GL_{\frac{n}{r}}(q^r)$. In particular such an element cannot be an involution. On the other hand if $r$ is odd then $\langle M, \sigma\rangle\cong GL_{\frac{n}{r}}(q^r).2r$. Thus $g$ must act as an involutory field automorphism on $GL_{\frac{n}{r}}(q^r)$. Then Lemma \ref{L: oddnormal} implies that
$$|H:C_H(g)|_{p', \heartsuit} = |H\cap GL_{\frac{n}{r}}(q^r):C_{H\cap GL_{\frac{n}{r}}(q^r)}(g)|_{p', \heartsuit}$$
and induction gives the result.

Next consider the possibility that $\langle H,\sigma\rangle$ lies inside a subgroup of type 2. Thus $H<\langle(GL_m(q)\wr S_t),\sigma\rangle$. Just as for Lemma \ref{L: oddgln}, we assume that $\langle H,\sigma\rangle$ acts primitively on the subspace decomposition. Take $g=s\sigma$ and note that $s$ acts as an involution on the subspace decomposition.\label{ten} 

We need to consider two situations which closely mirror the two cases discussed for a subgroup of type 7. First consider $C_{S}(g)$ where $S$ is the projection of $H$ onto a particular $GL_m(q)$ which is fixed by $s$. By induction $|S: C_{S}(g)|_{p', \heartsuit}<q^{\frac{3m-1}{4}}.$ Alternatively if $GL_m(q)\times GL_m(q)$ are swapped by $s$, and $S$ is the projection of $H$ onto $GL_m(q)\times GL_m(q)$ then it is clear that $|S: C_{S}(g)|_{p',\heartsuit}$ is at most the size of an odd-order subgroup in $GL_m(q)$. Thus, by Lemma \ref{L: oddgln}, this is bounded above by $q^{\frac{3m-2}{2}}$.

Now write $N=\underbrace{GL_m(q)\times\dots\times GL_m(q)}_t$. Then Lemma \ref{L: oddnormal} implies that 
$$|H:C_H(g)|=|H\cap N: C_{H\cap N}(g)||H/N: C_{H/N}(gN)|.$$
Thus, writing $s$ as the product of $k$ transpositions in its action on the subspace decomposition, we have
\begin{eqnarray*}
|H\cap N: C_{H\cap N}(g)|_{p', \heartsuit}&<&(q^{\frac{3m-1}{4}})^{t-2k}\times (q^{\frac{3m-2}{2}})^k \\
&<& q^{\frac{3n-1}{4}} \times \frac{1}{q^{\frac{t-1}{4}}}.
\end{eqnarray*}
Then, referring to Lemma \ref{L: oddsn}, it is sufficient to prove that $t^{\log_2t}<49^{\frac{t-1}{4}}$ which is clear.

If $H$ lies inside a group $M$ of type 8 then, once again, we need only consider the possibility that $3\leq n\leq 6$. In these cases we proceed similarly to the proof in Lemma \ref{L: oddgln} by appealing to \cite[Proposition 2.9.1]{kl}. Consider first the following cases where a simple group $K$ is involved in $M$: We have $n=3$ and $K=\Omega_3(q)\cong PSL_2(q)$; we have $n=5$ and $K=\Omega_5(q)\cong PSp_4(q)$; we have $n=6$ and $K=\Omega_6^\epsilon(q)\cong PSL_4^\epsilon(q)$. In all these cases $K$ is involved in a maximal subgroup of $GL_m(q)$ for some $m$ smaller than $n$. Let $H_K$ be the section of $H$ lying in $K$. Then $g$ induces an involutory field automorphism $g_K$ on $H_K$ hence, by induction, 
$|H_K:C_{H_K}(g_k)|_{p',\heartsuit}<q^{\frac{3m-1}4}$. Furthermore $|M/(M\cap Z(GL_n(q)))|_{p',\heartsuit}\leq (n,3)|K|_{p',\heartsuit}$. Hence we conclude that
$$|H:C_H(g)|_{p',\heartsuit}<|\sqrt{q}+1|_{p',\heartsuit}(n,3)q^{\frac{3m-1}4}$$
as required.

The only remaining case is when $n=4$ and $K=P\Omega_4^\epsilon(q)$ is involved in $M$. Note that $P\Omega_4^+(q)\cong PSL_2(q)\times PSL_2(q)$ and $P\Omega_4^-(q)\cong PSL_2(q^2)$ so $K$ is not necessarily simple. \label{nine}Now $g$ induces an involutory field automorphism $g_K$ on $H_K$. Hence, by our strengthened bound for $n=2$, 
$|H_K:C_{H_K}(g_k)|_{p',\heartsuit}<q^2$. Furthermore $|M/(M\cap Z(GL_n(q)))|_{p',\heartsuit}\leq |K|_{p',\heartsuit}$. Hence we conclude that
$$|H:C_H(g)|_{p',\heartsuit}<\frac{\sqrt{q}+1}2q^2<q^{\frac52}$$
as required.
\end{proof}

\begin{lemma}\label{L: reduced bound}
Let $H<GL_n(q)$ with $|H|$ odd. Suppose that $g$ is an involution in $GL_n(q)$ which normalizes $H$. Then,
$$|H:C_H(g)|_{p', \heartsuit}<q^{\frac{3n-2}4}.$$
\end{lemma}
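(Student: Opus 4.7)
The plan is to mimic the structure of the proof of Lemma~\ref{L: second gap}, proceeding by induction on $n$ and walking through the possible Aschbacher classes of a maximal subgroup $M$ of $GL_n(q)$ containing $\langle H,g\rangle$. The crucial structural difference this time is that $g$ is an \emph{inner} involution of $GL_n(q)$: since $p\geq 7$, $g$ is diagonalizable with eigenvalues $\pm 1$, and so $C_{GL_n(q)}(g) \cong GL_k(q)\times GL_{n-k}(q)$ for some $0\leq k\leq n$. This will replace the role played by the fixed-field subgroup $GL_n(q_0)$ in Lemma~\ref{L: second gap}.

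For the base case, $n=1$ forces $g$ to be central, so the index is $1$; the case $n=2$ is checked directly, splitting on whether $g$ is central (index $1$) or non-central (in which case $|H:C_H(g)|$ divides $q(q+1)$ and the $p',\heartsuit$-part is bounded by $(q+1)_{\heartsuit}<q$). For the inductive step, assuming $H<M$ a maximal subgroup of $GL_n(q)$, I handle the classes as follows. For type~$5$, pass to the subfield and apply induction for the smaller $q$. For types~$6$ and $7$, invoke Lemmas~\ref{L: six} and~\ref{L: seven} directly; the bounds there are already stronger than $q^{(3n-2)/4}$ on odd-order subgroups. For type~$1$, the quotient $H/O_p(H)$ embeds into $GL_m(q)\times GL_{n-m}(q)$ and $g$ acts coordinatewise as an inner involution of each factor, so induction applies to each factor and the two bounds multiply to $q^{(3n-4)/4}<q^{(3n-2)/4}$. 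Type~$4$ is handled analogously through the tensor factors.

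For type~$3$, where $M=GL_{n/r}(q^r).r$, either $g\in GL_{n/r}(q^r)$ (and induction over the extension field $q^r$ handles it), or $g$ acts non-trivially on the cyclic quotient of order $r$: if $r=2$ this forces $g$ to have order divisible by $4$, which is impossible; if $r$ is odd, then $g$ acts as an involutory field automorphism on $GL_{n/r}(q^r)$ and Lemma~\ref{L: second gap} applied to $H\cap GL_{n/r}(q^r)$ (combined with Lemma~\ref{L: oddnormal} as in the corresponding part of the proof of Lemma~\ref{L: second gap}) gives the required bound, once one checks that the input exponent $(3n/r-1)/4$ over $q^r$ produces an output $\leq (3n-2)/4$ over $q$. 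Type~$2$ is the most delicate case: with $M=GL_m(q)\wr S_t$ and $\langle H,g\rangle$ acting primitively on the block decomposition, write $g=s\cdot(x_1,\dots,x_t)$ where $s\in S_t$ is a product of $k$ transpositions fixing $t-2k$ blocks. On each fixed block, $x_i$ acts as an inner involution of $GL_m(q)$, contributing at most $q^{(3m-2)/4}$ by induction; on each swapped pair, the $C_H(g)$-index is at most an odd-order subgroup of $GL_m(q)$, bounded by $q^{(3m-2)/2}$ via Lemma~\ref{L: oddgln}; and the permutation action of $s$ on blocks contributes $t^{\log_2 t}$ via Lemma~\ref{L: oddsn}. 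Summing the exponents and using $q\geq 7$ gives the desired inequality. Finally, type~$8$ arises only for $3\leq n\leq 6$ and is treated by the same case-by-case appeal to \cite[Proposition 2.9.1]{kl} as at the end of Lemmas~\ref{L: oddgln} and~\ref{L: second gap}.

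The main obstacle is the type~$2$ case, because the target exponent $(3n-2)/4$ is tighter than the $(3n-1)/4$ used in Lemma~\ref{L: second gap} and there is no automatic gain from a field automorphism; the combined contribution from fixed blocks, swapped pairs, and the block permutation must be balanced carefully, with the extra slack coming from the hypothesis $p\geq 7$.
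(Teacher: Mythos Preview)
Your overall plan matches the paper's proof almost exactly---induction on $n$, base cases $n\le 2$, then the Aschbacher classes, with types 1, 2, 4, 5, 6, 7, 8 handled just as you describe. The one genuine error is in type~3, where you have the two subcases reversed. If $g\notin GL_{n/r}(q^r)$ then $g$ has non-trivial image in the cyclic quotient $C_r$; since $g$ is an involution this forces $r$ even, hence $r=2$. Your claim that ``$r=2$ forces $g$ to have order divisible by $4$'' is false: the Frobenius $x\mapsto x^q$ on $\mathbb{F}_{q^2}$ is $\mathbb{F}_q$-linear of order~$2$, so it yields an honest involution in $GL_n(q)$ lying in $M\setminus GL_m(q^2)$. (You may be recalling the analogous step in the proof of Lemma~\ref{L: second gap}, where the \emph{external} field automorphism $\sigma$ composes with this internal Frobenius to produce order~$4$---but here $g$ is inner, and there is no external $\sigma$.) Conversely, for $r$ odd the quotient $C_r$ contains no involutions, so $g$ must already lie in $GL_{n/r}(q^r)$ and your ``$r$ odd, $g$ outside'' sub-case is vacuous.

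The correction is exactly what the paper does: when $r=2$ and $g\notin GL_m(q^2)$, observe that $H\leq GL_m(q^2)$ since $|H|$ is odd, and $g$ acts on $GL_m(q^2)$ as an involutory field automorphism; Lemma~\ref{L: second gap} then gives $|H:C_H(g)|_{p',\heartsuit}<(q^2)^{(3m-1)/4}=q^{(3n-2)/4}$, which is precisely the required bound.
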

\begin{proof}
We proceed by induction on $n$. When $n=1$ the result is trivial and, when $n=2$, Lemma \ref{L: base case} gives the result. Now suppose that $H\leq M$, a maximal subgroup of $GL_n(q)$. Our method of proof mirrors the techniques used to prove Lemmas \ref{L: oddgln} and \ref{L: second gap}.

If $M$ is of type 5 then we simply descend to the base field and continue. If $M$ is of type $6$ or $7$ then Lemmas \ref{L: six} and \ref{L: seven} gives us the result. If $M$ is of type 1 or 4 then an easy inductive result gives us the result (as before).

If $M$ is of type $2$ then $H<GL_m(q)\wr S_t$. Set $N=\underbrace{GL_m(q)\times\dots\times GL_m(q)}_t$. If $g$ lies inside $N$ then induction implies that $|H\cap N:C_{H\cap N}(g)|_{p',\heartsuit}<(q^{\frac{3m-2}4})^t$. 

If $g\in H\backslash N$, then we proceed very similarly to the proof of Lemma \ref{L: second gap}. First consider $C_{S}(g)$ where $S$ is the projection of $H$ onto a particular $GL_m(q)$ which is fixed by $g$. By induction $|S: C_{S}(g)|_{p', \heartsuit}<q^{\frac{3m-2}{4}}.$ Alternatively if $GL_m(q)\times GL_m(q)$ are swapped by $g$, and $S$ is the projection of $H$ onto $GL_m(q)\times GL_m(q)$ then it is clear that $|S: C_{S}(g)|_{p',\heartsuit}$ is at most the size of an odd-order subgroup in $GL_m(q)$. Thus, by Lemma \ref{L: oddgln}, this is bounded above by $q^{\frac{3m-2}{2}}$. Thus $|H\cap N:C_{H\cap N}(g)|_{p',\heartsuit}<(q^{\frac{3m-2}4})^t$. In both cases we have $|H:C_H(g)|_{p',\heartsuit}< q^{\frac{3n-2t}4}\times t^{\log_2t}$ which is sufficient.

If $M$ is of type $3$ and $H<GL_m(q^r).r$, where $n=mr$ with $r$ prime, then there are two cases. If $g$ lies inside $GL_m(q^r)$ then induction implies that
$$|H:C_H(g)|_{p', \heartsuit}<(q^r)^{\frac{3m-2}4}r<q^{\frac{3n-2}4}.$$
. Otherwise if $g$ does not lie in $GL_m(q^r)$ then we must have $r=2$ and Lemma \ref{L: second gap} gives the result.

Finally if $H$ lies inside a group $M$ of type 8 then, once again, we need only consider the possibility that $3\leq n\leq 6$. As before, for $n\neq 4$ we simply use the fact that the simple classical group involved in $M$ is involved in a maximal subgroup of type 8 in $GL_m(q)$ for some $m<n$. Then induction gives the result. 

If $n=4$ and $P\Omega^\epsilon_4(q)$ is involved in $M$ then we make use of the isomorphisms given in \cite[Proposition 2.9.1]{kl} and refer to Lemma \ref{L: base case}.
\end{proof}

\subsection{The inductive part}

Now we suppose that Lemma A holds for $m<n$, i.e. we proceed under the following hypothesis:
\begin{ihypothesis}
Let $H_1$ be an even order subgroup of $GL_m(q)$ with $m<n$. Then $H_1$ contains an involution $g$ such that
$|H_1:C_{H_1}(g)|_{p',\heartsuit}\leq q^{m-1}+\dots+q+1$.
\end{ihypothesis}

\begin{lemma}
If $H$ has even order and lies in a maximal subgroup of type 1, 3 or 4 then 
$|H:C_H(g)|_{p',\heartsuit}\leq q^{n-1}+\dots+q+1$.
\end{lemma}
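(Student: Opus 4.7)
The plan is to treat each of the three types in turn; in each case I embed a suitable subquotient of $H$ into a product of smaller general linear groups and apply the centralizer-product framework of Lemma~\ref{L: invcentralizer} together with the inductive hypothesis.

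For type~1 (parabolic), set $U = O_p(H)$; since $p$ is odd, Lemma~\ref{L: oddnormal} yields $|H:C_H(g)|_{p',\heartsuit} = |H/U:C_{H/U}(gU)|_{p',\heartsuit}$, so I may replace $H$ by its image in $GL_m(q)\times GL_{n-m}(q)$ for some $1\le m\le n-1$. Write $A = H\cap(GL_m(q)\times 1)$ and $B = H\cap(1\times GL_{n-m}(q))$. When $A$ has odd order (the favourable case of Lemma~\ref{L: invcentralizer} with $k=2$) I apply the inductive hypothesis to the projection of $H$ onto $GL_{n-m}(q)$ to pick an involution $g_2$ with centralizer-index bounded by $q^{n-m-1}+\dots+q+1$, lift $g_2$ to an involution $g\in H$ (possible because $|A|$ is odd, since then the preimage of $g_2$ has order $2k$ with $k$ odd, and its $k$-th power is an involution), and bound $|A:C_A(g)|_{p',\heartsuit}$ via Lemma~\ref{L: reduced bound}. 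A short arithmetic check shows that $q^{(3m-2)/4}(q^{n-m-1}+\dots+q+1)\le q^{n-1}+\dots+q+1$ for $m\ge 2$, while $m=1$ is immediate because $GL_1(q)$ is abelian; if instead $B$ has odd order, reverse the roles.

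For type~3 ($H\le GL_{n/r}(q^r).r$ with $r$ prime) and type~4 ($H$ inside a central product $GL_m(q)\circ GL_t(q)$ with $n=mt$, $m,t\ge 2$) the same strategy applies. In type~4, quotienting out the central scalar intersection places $H$ in a product $GL_m(q)\times GL_t(q)$ and the type~1 argument carries over. In type~3 let $H_0 = H\cap GL_{n/r}(q^r)$. If $H_0$ has even order, apply the inductive hypothesis inside $GL_{n/r}(q^r)$ (rewriting the bound in terms of $q$) and handle the outer cyclic factor via Lemma~\ref{L: oddnormal}. If $r=2$ and $H_0$ has odd order, every involution of $H$ lies outside $H_0$ and induces an involutory field automorphism on $H_0$, so Lemma~\ref{L: second gap} supplies the required bound.

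The main obstacle is the residual subcase of type~1 (repeated in type~4) in which both $A$ and $B$ carry involutions. Picking $g = g_1g_2$ with $g_i$ an involution in the corresponding factor chosen by the inductive hypothesis gives $C_A(g_1)C_B(g_2)\le C_H(g)$ and hence
\[
|H:C_H(g)|\;\le\;|H:AB|\cdot|A:C_A(g_1)|\cdot|B:C_B(g_2)|.
\]
The product of the last two factors is already close to $q^{n-1}+\dots+q+1$, leaving no slack for the Goursat-diagonal factor $|H:AB|_{p',\heartsuit}$, where $H/AB$ is the common quotient of $\pi_1(H)/A$ and $\pi_2(H)/B$. To close this gap I plan either to apply the inductive hypothesis directly to the larger group $\pi_2(H)$ (rather than to $B$ alone), so that the diagonal quotient is already absorbed into $C_{\pi_2(H)}(g_2)$, or equivalently to invoke Lemma~\ref{L: sylowtwos} with $N=B$ and control the fusion factor $|g^H\cap P|/|g^B\cap P|$ through the action of $H/B$ on a Sylow~$2$-subgroup $P$ of $B$.
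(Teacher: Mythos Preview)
Your overall strategy matches the paper's: reduce modulo $O_p(H)$ for type~1, use Lemma~\ref{L: invcentralizer} together with the inductive hypothesis on the Levi factors, and for type~3 split on whether $H_0=H\cap GL_{n/r}(q^r)$ has even or odd order (invoking Lemma~\ref{L: second gap} when $r=2$ and $|H_0|$ is odd). The paper's own proof is extremely terse --- it simply asserts the product bound $(q^{m-1}+\dots+1)(q^{n-m-1}+\dots+1)$ for types~1 and~4 without naming the involution --- so you are in fact supplying the details that the paper elides.

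The residual case you flag (both $A$ and $B$ even) is real, but your two proposed fixes are not quite right as stated. Option~1 does not help: applying induction to $\pi_2(H)$ is exactly what you already did in the ``$A$ odd'' case, and the obstruction there is precisely that when $A$ is even you cannot lift an arbitrary involution of $\pi_2(H)$ to an involution of $H$. Option~2 is the correct idea but is inconsistent with your earlier choice $g=g_1g_2$: Lemma~\ref{L: sylowtwos} requires the involution to lie \emph{in} the normal subgroup $N$, so you must take $g\in B$ (or $g\in A$), not a product with non-trivial component in each factor.

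The clean way through is this. Order the factors so that $m\le n-m$ and set $N=A=H\cap(GL_m(q)\times 1)$. If $A$ is odd you are already done. If $A$ is even, choose $g\in A$ by the inductive hypothesis so that $|A:C_A(g)|_{p',\heartsuit}\le q^{m-1}+\dots+1$, and apply Lemma~\ref{L: sylowtwos} with $N=A$. The fusion ratio $|g^H\cap P|/|g^A\cap P|$ is at most the number of involutions in a Sylow $2$-subgroup $P$ of $A\le GL_m(q)$, which Lemma~\ref{L: sylowtwoingln} bounds by $q^{m-1}+\dots+1$ (up to the small exceptions listed there, all of which still give at most this quantity). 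Hence $|H:C_H(g)|_{p',\heartsuit}\le (q^{m-1}+\dots+1)^2$, and since $2m\le n$ this is below $q^{n-1}+\dots+1$. This is exactly the mechanism the paper spells out in detail for type~2 with $H\cap N$ even; for type~1 it is the same argument with $t=2$ and unequal block sizes.
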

\begin{proof}
If $H$ lies in a maximal subgroup of type 1 then $H\leq Q:(GL_m(q)\times GL_{n-m}(q)$ where $Q$ is a $p$-group and $m>1$. Then 
$$|H:C_H(g)|_{p',\heartsuit}\leq (q^{m-1}+\dots+q+1)(q^{n-m-1}+\dots+q+1)<q^{n-1}+\dots+q+1.$$

If $H$ lies in a maximal subgroup of type 3 then $H\leq M=GL_m(q^r).r$ where $n=mr$ and $r$ is prime. Let $N=GL_m(r)$ be normal in $M$ and split into two cases. Suppose first that $|H\cap N|$ is even. Then induction implies that $H\cap N$ contains an involution $g$ such that
$$|H:C_H(g)|_{p',\heartsuit}\leq ((q^r)^{m-1}+\dots+q+1)r< q^{mr-1}+\dots+q+1$$
as required. Suppose on the other hand that $|H\cap N|$ is odd. Then $r=2$ and  Lemma \ref{L: second gap} gives the result.

If $H$ lies in a maximal subgroup of type 4 then $H\leq GL_m(q)\circ GL_r(q)$ where $n=mr$ and $m,r>1$. Then induction implies that
$$|H:C_H(g)|_{p',\heartsuit}\leq (q^{m-1}+\dots+q+1)(q^{r-1}+\dots+q+1)<q^{mr-1}+\dots+q+1.$$
\end{proof}

We are left with subgroups lying only in maximal subgroups of type 2. 

\subsubsection{Maximal subgroups of type 2}

Take $H\leq M=GL_m(q)\wr S_t, n=mt, t\geq2.$ Write 
$$N=\underbrace{GL_m(q)\times\dots\times GL_m(q)}_t.$$ 

It is clear that we can assume that $M$ acts primitively on the vector decomposition, otherwise we lie in a different subgroup of type 2 which we consider instead. We have three subcases, the first of which is dealt with in the following lemma.

\begin{lemma}
Suppose that $H\cap N$ has odd order and $t<n$. Then $H$ contains an involution $g$ such that
$|H:C_H(g)|_{p',\heartsuit}\leq q^{n-1}+\dots+q+1$.
\end{lemma}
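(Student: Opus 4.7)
The plan is to exploit that $H\cap N$ is odd-order normal in $H$. Since $|H|$ is even while $|H\cap N|$ is odd, $H$ contains an involution $g$ whose image $\bar g$ in $\bar H:=H/(H\cap N)\le S_t$ is itself a nontrivial involution. Applying Lemma~\ref{L: oddnormal} with the odd normal subgroup $H\cap N$ gives
$$|H:C_H(g)|=|H\cap N:C_{H\cap N}(g)|\cdot|\bar H:C_{\bar H}(\bar g)|,$$
and since the operation $k\mapsto k_{p',\heartsuit}$ is submultiplicative, it suffices to bound each of the two factors in the $p',\heartsuit$-sense separately.

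For the first factor, write $\bar g$ as a product of $k$ transpositions with $t-2k$ fixed points in its action on the subspace decomposition, and iterate Lemma~\ref{L: oddnormal} down the corresponding block decomposition of $H\cap N\le\prod GL_m(q)$. For each fixed block, the projection of $H\cap N$ is an odd-order subgroup of $GL_m(q)$ normalised by the involutory component of $g$, so Lemma~\ref{L: reduced bound} contributes a factor less than $q^{(3m-2)/4}$. For each swapped pair of blocks, the centralizer of $g$ in the projection to $GL_m(q)\times GL_m(q)$ is essentially the diagonal copy of $GL_m(q)$, so the index is bounded by the order of an odd-order subgroup of $GL_m(q)$, which by Lemma~\ref{L: oddgln} is less than $q^{(3m-2)/2}$. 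Pleasantly the dependence on $k$ cancels:
$$|H\cap N:C_{H\cap N}(g)|_{p',\heartsuit}<\bigl(q^{(3m-2)/4}\bigr)^{t-2k}\bigl(q^{(3m-2)/2}\bigr)^k=q^{(3m-2)t/4}=q^{(3n-2t)/4}.$$

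For the second factor, $\bar H$ is a primitive subgroup of $S_t$ of even order, and I would pick $\bar g$ (and thereby $g$) so that its $\bar H$-conjugacy class is as small as possible. If $\bar H=S_t$ I take $\bar g$ to be a transposition, giving class size $t(t-1)/2$; if $\bar H=A_t$ I take $\bar g$ to be a double transposition, giving class size at most $t(t-1)(t-2)(t-3)/8$; otherwise $\bar H\not\supseteq A_t$ and I invoke the Praeger--Saxl bound $|\bar H|\le 4^t$. The hypothesis $t<n$ forces $m\ge 2$, so $n-1-(3n-2t)/4=(n+2t-4)/4\ge t-1$, and it therefore suffices to verify the quotient factor is at most $q^{t-1}\ge 7^{t-1}$; this is routine for $t\ge 4$ (each of $t^2$, $t^4/8$, and $4^t$ is at most $7^{t-1}$ in that range) while $t=2,3$ may be checked directly. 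The main obstacle is that $\bar H$ has \emph{even} order, so the clean bound $|\bar H|<t^{\log_2 t}$ of Lemma~\ref{L: oddsn} that was used in the analogous step of Lemma~\ref{L: second gap} is unavailable; one is forced either into the case split above or to appeal to an external bound on orders of primitive permutation groups.
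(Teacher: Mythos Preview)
Your argument is correct, and the first-factor bound $|H\cap N:C_{H\cap N}(g)|_{p',\heartsuit}<q^{(3n-2t)/4}$ via the block-by-block analysis (fixed blocks handled by Lemma~\ref{L: reduced bound}, swapped pairs by Lemma~\ref{L: oddgln}) is exactly what is needed; indeed the paper states this same bound with only a one-line gesture, so your treatment of that half is if anything more complete.

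Where you diverge is in bounding the quotient factor $|\bar H:C_{\bar H}(\bar g)|$. You split into the cases $\bar H\supseteq A_t$ versus not and invoke the Praeger--Saxl bound $|\bar H|\le 4^t$ externally. The paper instead exploits the hypothesis $t<n$ directly: since $\bar H\le S_t$ and $S_t$ embeds in $GL_t(q)$, the \emph{inductive hypothesis} (Lemma~A in dimension $t<n$) applied to $\bar H$ yields an involution $\bar g$ with $|\bar H:C_{\bar H}(\bar g)|_{p',\heartsuit}\le q^{t-1}+\dots+q+1$, and then $q^{n-t}(q^{t-1}+\dots+q+1)<q^{n-1}+\dots+q+1$ finishes at once. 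This is cleaner, stays entirely within the inductive framework, and is precisely the reason the hypothesis $t<n$ is isolated in the statement. So the ``obstacle'' you identify --- that Lemma~\ref{L: oddsn} is unavailable because $\bar H$ has even order --- is a non-issue: the induction itself supplies the replacement. (Alternatively, Lemma~\ref{L: sninvolutions} already gives $|\bar H:C_{\bar H}(\bar g)|<42^{(t-2)/2}<7^{t-1}$, which is the bound you wanted without any case split.)
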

\begin{proof}
By Lemma \ref{L: oddgln},
$$|H\cap N|_{p',\heartsuit}<(q^{\frac{3m-2}{2}})^t.$$
Then, for $g$ an involution in $H$,
$$|H\cap N:C_{H\cap N}(g)|_{p',\heartsuit}<(q^{\frac{3m-2}{4}})^t\leq q^{n-t}.$$
Now $S_t<GL_t(q)$ hence, by induction, there is an involution $g$ in $H$ such that $|H/N:C_{H/N}(gN)|<q^{t-1}+\dots+q+1.$ Thus, by Lemma \ref{L: oddnormal},
$$|H:C_H(g)|<q^{n-t}(q^{t-1}+\dots+q+1)<q^{n-1}+\dots+q+1.$$
\end{proof}

We next consider the possibility that $H\cap N$ has odd order and $t=n$. We state a preliminary lemma before dealing with this case.

\begin{lemma}\label{L: sninvolutions}
If $H$ is a primitive subgroup of $S_n$  of even order then $H$ contains an involution $g$ such that
$|H:C_H(g)|<42^{\frac{n-2}{2}}$.
\end{lemma}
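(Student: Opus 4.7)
The plan is to split the proof into two main cases according to whether $H$ contains the alternating group $A_n$ as a normal subgroup.

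First, suppose $H\supseteq A_n$, so that $H\in\{A_n,S_n\}$ and $n\geq 5$. I would take $g$ to be a double transposition. Its $S_n$-conjugacy class has size $\frac{n(n-1)(n-2)(n-3)}{8}$, and its $A_n$-conjugacy class has size either equal to this or half of it. In both cases $|H:C_H(g)|\leq \frac{n(n-1)(n-2)(n-3)}{8}$, and a routine calculation confirms that this quantity is strictly less than $42^{(n-2)/2}$ for every $n\geq 5$.

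Second, suppose $H\not\supseteq A_n$. Here I would invoke the classical Praeger--Saxl bound, which asserts that any primitive subgroup of $S_n$ not containing $A_n$ has order strictly less than $4^n$. For any involution $g\in H$ we then obtain $|H:C_H(g)|\leq |H|/2 < 4^n/2$, and a short computation shows that this is bounded above by $42^{(n-2)/2}$ as soon as $n\geq 8$. The finitely many remaining cases $n\leq 7$ will be handled by direct inspection using the well-known classification of primitive permutation groups of degree at most $7$: each such group of even order is small, has few conjugacy classes of involutions, and the inequality can be verified one case at a time.

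The main obstacle is the mismatch between the exponent $\sqrt{42}\approx 6.48$ in the stated bound and the weaker base $4$ of the Praeger--Saxl estimate, which forces the explicit check in low degree. Conceptually, however, nothing difficult occurs in either regime, and the whole argument is a short case analysis rather than a long structural proof. If a cleaner argument is desired, one could instead run an O'Nan--Scott case analysis (affine, almost simple, product action, diagonal, twisted wreath) and, within each type, locate an explicit involution whose centralizer contains either a large translation subgroup, a large diagonal subgroup, or a copy of the wreath base group; this avoids the small-degree check but at the cost of substantially more work.
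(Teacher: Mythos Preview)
Your proposal is correct and follows essentially the same route as the paper: the paper also dispatches $n\leq 7$ by inspection, invokes the Praeger--Saxl bound $|H|<4^n$ for primitive $H\not\supseteq A_n$ together with the inequality $4^n<42^{(n-2)/2}$ for $n\geq 8$, and in the $A_n$ case simply notes that a suitable involution has class size below $n^4$. Your version is in fact slightly more explicit (naming the double transposition and writing out its class size), but the structure and the key external input are identical.
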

\begin{proof}
The statement is clearly true if $n\leq 7$. If $n\geq 8$ then $4^n<42^{\frac{n-2}{2}}$. Then we refer to \cite{ps} which asserts that any primitive subgroup of degree $n$ must either contain $A_n$ or else has order less than $4^n$.

If $H$ contains $A_n$ then $H$ contains an involution $g$ such that $|H:C_H(g)|<n^4$ and we are done.
\end{proof}

\begin{lemma}
Suppose that $H\cap N$ has odd order and $t=n$. Then $H$ contains an involution $g$ such that
$|H:C_H(g)|_{p',\heartsuit}\leq q^{n-1}+\dots+q+1$.
\end{lemma}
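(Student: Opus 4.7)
The approach is to exploit the fact that with $t = n$ we have $m = 1$, so that $N = (\mathbb{F}_q^\times)^n$ is abelian. Set $H^* := H/(H\cap N)$, a primitive subgroup of $S_n$; since $|H|$ is even and $|H \cap N|$ is odd, $H^*$ has even order.

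First I would lift an involution from $H^*$: any involution $\sigma \in H^*$ has a preimage of order $2$ in $H$, by Sylow's theorem applied to the coset $g(H\cap N)$ using that $H \cap N$ is a normal subgroup of odd order. For any such involution $g \in H$ whose image $\sigma \in S_n$ consists of $k$ transpositions (and $n-2k$ fixed points), the permutation action of $\sigma$ on $N = (\mathbb{F}_q^\times)^n$ by coordinate swapping has $C_N(\sigma) = \{(a_1, \ldots, a_n) : a_i = a_{\sigma(i)}\}$, of index $(q-1)^k$ in $N$. Hence $|H \cap N : C_{H \cap N}(g)|$ divides $(q-1)^k$, and by Lemma \ref{L: oddnormal},
$$|H:C_H(g)|_{p',\heartsuit} \leq ((q-1)_\heartsuit)^k \cdot |H^* : C_{H^*}(\sigma)|_\heartsuit.$$

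I would then split into two cases according to whether $A_n \leq H^*$. If $A_n \leq H^*$, I would take $\sigma$ to be a double transposition (or a transposition when $n = 3$), so that $k \leq 2$ and $|H^*:C_{H^*}(\sigma)| \leq n(n-1)(n-2)(n-3)/8$; this gives a bound of roughly $((q-1)_\heartsuit)^2 n^4/8$, which comfortably beats $q^{n-1}+\cdots+q+1$ for $n$ and $q$ in the permissible ranges. If $A_n \not\leq H^*$, I would invoke the Praeger--Saxl bound $|H^*| < 4^n$ from \cite{ps}, together with Lemma \ref{L: sninvolutions}, to extract an involution $\sigma$ with $|H^*:C_{H^*}(\sigma)| < 42^{(n-2)/2}$; combined with $k \leq n/2$ this yields the bound $((q-1)_\heartsuit)^{n/2} \cdot 42^{(n-2)/2}$.

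The main obstacle will be verifying the desired inequality in this second case for the smallest values of $q$ (notably $q=7$) and moderately small $n$, where the asymptotic comparison does not leave much slack. For these values I would exploit the fact that $(q-1)_\heartsuit$ is typically far smaller than $q-1$ (for $q=7,13,31$ it is just $3$, and for $q = 19, 37$ it is $9$), and would deal with the handful of small degrees by appealing to the classification of primitive subgroups of $S_n$ for $n$ up to some explicit bound, treating each such primitive group by direct calculation of its involution classes and applying the centralizer inequality above. Once $n$ is sufficiently large, the exponent gap between $q^{n-1}$ and $((q-1)_\heartsuit)^{n/2} \cdot 42^{(n-2)/2}$ grows geometrically, so only finitely many pairs $(q,n)$ need individual attention.
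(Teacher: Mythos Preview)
Your overall framework matches the paper's: reduce to $m=1$, use Lemma~\ref{L: oddnormal} to split the conjugacy-class size into an $N$-part and an $S_t$-part, and bound the latter via Lemma~\ref{L: sninvolutions}. The split into $A_n\leq H^*$ versus $A_n\not\leq H^*$ is unnecessary (Lemma~\ref{L: sninvolutions} already covers both), but harmless.

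There is, however, a genuine gap in your treatment of the $N$-part. You bound $|H\cap N:C_{H\cap N}(g)|_{p',\heartsuit}$ by $((q-1)_\heartsuit)^k$, and then assert that for large $n$ the quantity $((q-1)_\heartsuit)^{n/2}\cdot 42^{(n-2)/2}$ is eventually dominated by $q^{n-1}$. For $q=7$ this is false: with $(q-1)_\heartsuit=3$ your bound is $3\cdot 126^{(n-2)/2}$, which outgrows $7^{n-1}=7\cdot 49^{(n-2)/2}$ since $126>49$. So for $q=7$ there are not ``finitely many pairs $(q,n)$'' to handle by hand; your inequality fails for every $n\geq 4$. (Incidentally, your values for $q=19,37$ are off: by Definition~\ref{D: heart} the $3$-contribution to $k_\heartsuit$ is $(k,3)$, not the full $3$-part of $k$, so $(18)_\heartsuit=(36)_\heartsuit=3$, not $9$.)

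The paper closes this gap by exploiting exactly the feature of $\heartsuit$ you overlooked: since the $3$-contribution is only $\gcd(\,\cdot\,,3)$, one has $((q-1)^k)_{p',\heartsuit}\leq 3$ for each $q\in\{7,13,19,31,37\}$ \emph{independently of $k$}, because no prime $\equiv 1\pmod 3$ divides $q-1$ in these cases. Thus $|N:C_N(g)|_{p',\heartsuit}\leq 3$ outright, and the comparison becomes $3\cdot 42^{(n-2)/2}$ versus $q^{n-1}+\cdots+1$, which succeeds for all $n$. Replacing your estimate $((q-1)_\heartsuit)^k$ by the sharper $((q-1)^k)_\heartsuit$ is the missing step.
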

\begin{proof}
Observe that $N=(q-1)^n$. We proceed very similarly to the proofs of Lemmas \ref{L: second gap} and \ref{L: reduced bound}. First consider $C_{S}(g)$ where $S$ is the projection of $H$ onto a particular $q-1$ which is fixed by $g$. Clearly $|S: C_{S}(g)|_{p', \heartsuit}=1.$ Alternatively if $(q-1)\times (q-1)$ are swapped by $g$, and $S$ is the projection of $H$ onto $(q-1)\times(q-1)$ then it is clear that $|S: C_{S}(g)|_{p',\heartsuit}< (q-1)$. Thus, in all cases,
$$|N:C_N(g)|_{p',\heartsuit}<(q-1)^{\frac{n}{2}}.$$
By Lemma \ref{L: sninvolutions}
$$|H/N:C_{H/N}(gN)|\leq 42^\frac{n-2}{2}.$$
By Lemma \ref{L: oddnormal},
$$|H:C_H(g)|_{p',\heartsuit}< (q-1)^{\frac{n}{2}}\times 42^\frac{n-2}{2}.$$
For $q\geq 43$ this is less than $q^{n-1}+\dots+q+1.$

If $q<43$ then $q=7,13,19,31$ or $37$. In all cases $|q-1|_{p',\heartsuit}=3$ and hence $|N:C_N(g)|_{p',\heartsuit}\leq3$. Thus, by Lemma \ref{L: oddnormal}, 
$$|H:C_H(g)|_{p',\heartsuit}\leq 3\times 42^\frac{n-2}{2}\leq q^{n-1}+\dots+q+1.$$

\end{proof}

The final subcase is when $H\cap N$ has even order. The following lemma deals with this situation.

\begin{lemma}
Suppose that $H\leq GL_m(q)\wr S_t$ where $n=mt$. Write 
$$N=\underbrace{GL_m(q)\times\dots\times GL_m(q)}_t.$$
Suppose that $H\cap N$ has even order. Then $H$ contains an involution $g$ such that
$$|H:C_H(g)|_{p', \heartsuit}\leq q^{n-1}+\dots+q+1.$$
\end{lemma}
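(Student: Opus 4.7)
The plan is to exhibit a carefully chosen involution $g \in H \cap N$ and then bound $|H : C_H(g)|_{p', \heartsuit}$ using Lemma~\ref{L: invcentralizer} combined with a separate estimate of the contribution from $H/(H \cap N) \leq S_t$. View $H \cap N$ as a subgroup of the direct product $GL_m(q) \times \cdots \times GL_m(q)$ and consider the filtration $L_1 = H \cap N \twoheadrightarrow L_2 \twoheadrightarrow \cdots \twoheadrightarrow L_t$ with kernels $T_i$ as in Lemma~\ref{L: invcentralizer}; since $H \cap N$ has even order, at least one $T_i$ has even order, and after relabelling coordinates (which merely permutes the factors of $N$) I may assume $T_1, \ldots, T_{k-1}$ have odd order while $T_k$ has even order.

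Next I would apply the inductive hypothesis to $T_k \leq GL_m(q)$, which is valid since $m < n$ whenever $t \geq 2$, obtaining an involution $g_k^\ast \in T_k$ with $|T_k : C_{T_k}(g_k^\ast)|_{p', \heartsuit} \leq q^{m-1} + \cdots + q + 1$. I would lift $g_k^\ast$ to an involution $g \in H \cap N$ using the odd-power trick: any lift has its square in the kernel of $L_1 \to L_k$, which has odd order $|T_1| \cdots |T_{k-1}|$, so taking an appropriate odd power of the lift produces an involution mapping to $g_k^\ast$. By construction the image of $g$ in $L_{k+1}$ is trivial, so the ratio $|g_k^{L_k} \cap P|/|g_k^{T_k} \cap P|$ in Lemma~\ref{L: invcentralizer} equals $1$, yielding
\[
|H \cap N : C_{H \cap N}(g)|_{p', \heartsuit} \leq \left(\prod_{i=1}^{k-1} |T_i : C_{T_i}(g_i)|_{p', \heartsuit}\right)(q^{m-1} + \cdots + q + 1).
\]
For $i < k$ the conjugation action of $g_i$ on the odd-order $T_i$ agrees with conjugation by an involution in $GL_m(q)$, so Lemma~\ref{L: reduced bound} bounds each factor by $q^{(3m-2)/4}$ (or by $1$ when $g_i$ projects trivially).

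To handle the quotient, I would apply Lemma~\ref{L: sylowtwos} to the pair $H \cap N \lhd H$ to write $|H : C_H(g)|$ as $|H \cap N : C_{H \cap N}(g)|$ times a ratio bounded by the orbit size of $g^{H \cap N}$ under conjugation by $H/(H \cap N)$; the latter factor I would control using the primitivity of the action of $H/(H \cap N) \leq S_t$ together with Lemma~\ref{L: sninvolutions} (when that image has even order) or Lemma~\ref{L: oddsn} (when it is odd), giving a contribution of size roughly $42^{(t-2)/2}$ or $t^{\log_2 t}$ respectively. Assembling the pieces, the desired inequality reduces to checking
\[
q^{(k-1)(3m-2)/4} \cdot (q^{m-1} + \cdots + q + 1) \cdot (\text{quotient factor}) \leq q^{mt - 1} + \cdots + q + 1,
\]
and I expect the main obstacle to lie in the extremal regimes: the case $m = 1$, where the factor $q^{m-1} + \cdots + q + 1$ collapses to $1$ and all of the work must be done by a direct analysis of the $S_t$-action (mirroring the previous lemma's treatment of $H \cap N$ odd with $t = n$, which will force a separate argument for small primes $q$ where $|q-1|_{p',\heartsuit}$ is tiny); and the case $k = t$ with $m$ moderate, where the product of the weak odd-order bounds must be reconciled with the $S_t$-quotient factor and relies crucially on the primitivity assumption to prevent the $t!$ from dominating.
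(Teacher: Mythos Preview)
Your broad outline---choose $g_k\in T_k$ via the inductive hypothesis, lift it to an involution $g\in H\cap N$, apply Lemma~\ref{L: invcentralizer} to bound $|H\cap N:C_{H\cap N}(g)|$, and then account for fusion under $H/(H\cap N)$---matches the paper's strategy. However, two of your key steps fail.

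First, your claim that the ratio $|(g_k)^{L_k}\cap P|/|(g_k)^{T_k}\cap P|$ equals $1$ because $g_{k+1}$ is trivial is incorrect. That ratio equals $|L_k:C_{L_k}(g_k)|/|T_k:C_{T_k}(g_k)|$, which measures the extra fusion of $g_k$ under $L_k$ versus $T_k$; the fact that $g_k$ lies in $T_k$ does not prevent $L_k$ from fusing distinct $T_k$-classes. (A toy instance: $T_k\cong V_4$ normal in $L_k\cong S_4$, with $g_k$ a double transposition, gives ratio $3$.) The paper does \emph{not} discard this term: it bounds $|(g_k)^{L_k}\cap P|$ by the total number of involutions in a Sylow $2$-subgroup of $GL_m(q)$ via Lemma~\ref{L: sylowtwoingln}, obtaining the weaker estimate $|H\cap N:C_{H\cap N}(g)|_{p',\heartsuit}\leq (q^{m-1}+\dots+q+1)^{\min\{k+1,t\}}$.

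Second, your plan for the quotient contribution cannot work as stated. Since $g\in H\cap N$, its image in $H/(H\cap N)$ is the identity, so Lemmas~\ref{L: sninvolutions} and~\ref{L: oddsn} (which bound centralizer indices of involutions in primitive subgroups of $S_t$) have nothing to act on. What you actually need to control is the number of $(H\cap N)$-classes fused into a single $H$-class, and this is not governed by those lemmas. The paper handles this fusion in three regimes: for $t<20$ the crude bound $t!$ already suffices; for $t\geq 20$ and $k\leq 4$ the key observation is that $g$ has the shape $(h_1,\dots,h_k,1,\dots,1)$, so the permutation action of $H/(H\cap N)$ can produce at most $k!\binom{t}{k}<t^4$ distinct $(H\cap N)$-classes in the $H$-orbit; for $t\geq 20$ and $k\geq 5$ one instead exploits that a Sylow $2$-subgroup $P$ of $H\cap N$ is isomorphic to one of $L_k$, hence embeds in $GL_m(q)^{t-k+1}$, and then bounds $|g^H\cap P|$ by the total involution count in such a Sylow $2$-subgroup using Lemma~\ref{L: sylowtwoingln}. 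This trichotomy is the substantive content of the argument and is absent from your sketch.
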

\begin{proof}
We begin by investigating the quantity $|H\cap N:C_{H\cap N}(g)|_{p',\heartsuit}$. Our analysis (and our notation) mirrors the set up in Lemma \ref{L: invcentralizer}. Write $N=N_1\times \dots \times N_t$ with $N_i\cong GL_m(q)$ for $i=1,\dots,t$. Write $L_i$ for the projection of $H\cap N$ onto $N_i\times\dots\times N_t$ and write $\psi_i:L_i\to L_{i+1}$ for the canonical projection. Let $T_i$ be the kernel of $\psi_i$ for $1\leq i<t$; define $T_t:= L_t$. Suppose that $|T_i|$ is odd for $i<k\leq t$ and $|T_i|$ is even for $i=k$.

Observe that $T_k\leq GL_m(q)$. Then, by induction, take an involution $g_k\in T_k$ such that 
$$|T_k:C_{t_k}(g_k)|_{p', \heartsuit}\leq q^{m-1}+\dots+q+1.$$
Let $g\in G$ be the pre-image of $g_k$ in $H\cap N$. Write $g_i$ for the image of $g$ under the projection of $H\cap N$ onto $L_i$. Then Lemma \ref{L: oddgln} implies that $|T_i:C_{T_i}(g_i)|_{p',\heartsuit}<q^{\frac{3m-2}4}$ for $1\leq i <k$. Thus Lemma \ref{L: invcentralizer} implies that
\begin{eqnarray}
|H\cap N:C_{H\cap N}(g)|_{p',\heartsuit}<q^{(\frac{3m-2}4)(k-1)}(q^{m-1}+\dots+q+1)\frac{|(g_k)^{L_k}\cap P_k|_{p',\heartsuit}}{|(g_k)^{T_k}\cap P_k|_{p',\heartsuit}}.\label{eqn}
\end{eqnarray}
Here $P_k$ is a Sylow $2$-subgroup of $T_k$ and so $(g_k)^{L_k}\cap P_k$ is the set of $L_k$-conjugates of $g_k$ which lie in $P_k$.

If $k=t$ then $L_k=T_k$; if $k<t$ then Lemma \ref{L: sylowtwoingln} implies that $|(g_k)^{L_k}\cap P_k|\leq q^{m-1}+\dots+q+1$. Thus in all cases it is clear that
\begin{eqnarray}
|H\cap N:C_{H\cap N}(g)|_{p',\heartsuit}\leq (q^{m-1}+\dots+q+1)^{\min\{k+1,t\}}.\label{eqnb}
\end{eqnarray}
Now $H/N\leq S_t$ and so $H$ can fuse at most $t!$ conjugacy classes of $H\cap N$ into one. Thus we have
$$|H:C_{H}(g)|_{p',\heartsuit}\leq (q^{m-1}+\dots+q+1)^tt!$$
Now if $t<20$ then $(t!)_{p', \heartsuit}<(p-1)^{t-1}$ which implies that $|H:C_H(g)|_{p', \heartsuit}\leq q^{n-1}$ as required. Thus we assume that $t\geq 20$.

For the remainder we split into two cases. First suppose that $k\geq 5$. Recall that $P_k$ is a Sylow $2$-subgroup of $T_k$ and let $P$ be a Sylow $2$-subgroup of $H\cap N$. Observe that $P_k\cong P$. Furthermore $P$ is a Sylow $2$-subgroup of $H$, while $P_k$ is a Sylow $2$-subgroup of $L_k\cong (H\cap N)/K$ where $K$ is an odd order subgroup. In particular this implies that
\begin{eqnarray}
|(g_k)^{L_k}\cap P_1| = |g^{H\cap N}\cap P|.\label{eqnc}
\end{eqnarray}
Then (\ref{eqn}), (\ref{eqnc}) and Lemma \ref{L: sylowtwos} imply that
$$|H:C_H(g)|_{p',\heartsuit}<q^{(\frac{3m-2}4)(k-1)}(q^{m-1}+\dots+q+1)|g^H\cap P|_{p',\heartsuit}.$$

%Now observe that $O(H\cap N)$ is normal in $H$. Write $S_i$ for the projection of $O(H\cap N)$ to $N_i$. 
Observe next that $P$ is isomorphic to a $2$-group lying in $GL_m(q)^{t-k+1}$. Thus $|g^H\cap P|$ is less than the total number of involutions in a Sylow $2$-group of $GL_m(q)^{t-k+1}$.  Thus Lemma \ref{L: sylowtwoingln} implies that $|g^H\cap P|<(q^{m-1}+\dots+q+3)^{t-k+1}$. Hence we have
$$|H:C_H(g)|_{p',\heartsuit}<q^{(\frac{3m-2}4)(k-1)}(q^{m-1}+\dots+q+1)(q^{m-1}+\dots+q+3)^{t-k+1}<q^{n-1}$$ as required.

Suppose that $k\leq 4$. Now $H$ acts by permuting elements in $N$, and $g$ has form 
$$(h_1,\dots, h_k,1,\dots,1).$$
 Thus $H$ must fuse at most $k!{t\choose k}$ such conjugacy classes of $H\cap N$. For $k\leq 4$ we have $k!{t\choose k}< t^4$ hence, by (\ref{eqnb}) we have
$$|H:C_H(g)|< (q^{m-1}+\dots+q+1)^5t^4<q^{5m-1}t^4.$$
Now, for $t\geq 20$, this is less than $q^{mt-1}+\dots+q+1$ as required.
\end{proof}

This completes our proof of Lemma A, and thereby proves Theorem A.

\section{Quaternion Sylow $2$-subgroups}\label{S: quaternionsylowtwos}

In this section we operate under Hypothesis \ref{h:basic} and use the associated notation, as defined at the start of Section \ref{SS: pp}. We will also use the notation that, for a group $H$, $\bar{H}=H/O(H)$ where $O(H)$ is the largest odd-order normal subgroup of $H$.

We are interested in the possibility that $G$ is insoluble. Now Theorem A implies that in this case $G$ must have generalized quaternion Sylow $2$-subgroups. Hence, for the remainder of this paper, we suppose that this is the case. 

Using the comments in \cite[p377]{gorenstein3} we can adapt the results in \cite{gw1} to write the structure of $\bar{G}$. Then $\bar{G}$ is isomorphic to one of the following groups:
\begin{itemize}
\item a Sylow-2 subgroup of $G$.
\item $Z.A_7$, a central extension of $A_7$ by $Z$, a group of order $2$.
\item $SL_2(q).D$ where $D$ is cyclic of odd order $d$.
\end{itemize}

Note that, in particular, $G=O(G)C_G(g)$ for $g$ an involution. Next we note a background result:

%\begin{lemma}\label{lemma:quaternionsylowtwo}
%If $G$ has generalized quaternion Sylow $2$-subgroups then $u^2-u+1$
%divides into $|O(G)|$.
%\end{lemma}
%\begin{proof}
%Let $g$ be an involution in $G$. We apply Glauberman's
%$Z^*$-Theorem to give $G=C_G(g)O(G)$. Since $C_G(g)$ acts on the fixed
%set of $g$ we must have $u^2-u+1$ dividing into $|O(G)|$ as required.
%\end{proof}

%In fact we can make use of the following lemma to, more or less, exclude the %
%possibility of quaternion Sylow $2$-subgroups:

\begin{lemma}\cite[(5.21)]{aschbacher3}
Let $G$ be transitive on a set $X$, $x\in X$, $H$ the stabilizer in $G$ of $x$, and $K\leq H$. Then $N_G(K)$ is transitive on $\Fix(K)$ if and only if $K^G\cap H=K^H$.
\end{lemma}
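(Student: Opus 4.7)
The plan is to parametrise $\Fix(K)$ via $G$-translates of the base point $x$, translate the transitivity statement into a statement about subgroups of $H$ conjugate to $K$, and then chase the definitions in both directions.

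First I would establish the key identification: since $G$ is transitive on $X$, every point of $X$ has the form $x \cdot g$, and the stabilizer of $x \cdot g$ is $g^{-1}Hg$. So $K$ fixes $x \cdot g$ if and only if $K \leq g^{-1}Hg$, equivalently $gKg^{-1} \leq H$. In particular $x \in \Fix(K)$ itself (because $K \leq H = G_x$), so the action of $N_G(K)$ on $\Fix(K)$ makes sense and fixing $x$ as a base point is natural. I would verify en passant that $N_G(K)$ really does preserve $\Fix(K)$: if $K$ fixes $y$ and $n \in N_G(K)$, then for any $k \in K$ one has $k(y \cdot n) = y \cdot (n \cdot n^{-1}kn) = y \cdot n$, using that $n^{-1}kn \in K$ fixes $y$.

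For the forward direction, assume $N_G(K)$ is transitive on $\Fix(K)$ and take $L \in K^G \cap H$, say $L = g^{-1}Kg$ with $L \leq H$. The inclusion $L \leq H$ gives $gLg^{-1} = K \leq gHg^{-1}$, which by the identification above says that $K$ fixes $x \cdot g^{-1}$, i.e.\ $x \cdot g^{-1} \in \Fix(K)$. Transitivity produces $n \in N_G(K)$ with $x \cdot n = x \cdot g^{-1}$, so $ng \in H$; writing $h = ng$ gives $g = n^{-1}h$ and therefore $L = g^{-1}Kg = h^{-1}(nKn^{-1})h = h^{-1}Kh \in K^H$, as required. The reverse inclusion $K^H \subseteq K^G \cap H$ is immediate.

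For the converse, assume $K^G \cap H = K^H$ and let $y \in \Fix(K)$, written as $y = x \cdot g$. Then $gKg^{-1} \leq H$, so $gKg^{-1} \in K^G \cap H = K^H$, hence $gKg^{-1} = h^{-1}Kh$ for some $h \in H$. Rearranging, $hg \in N_G(K)$; setting $n = hg$, we have $x \cdot n = (x \cdot h) \cdot g = x \cdot g = y$ (using $h \in H$). So every $y \in \Fix(K)$ lies in the $N_G(K)$-orbit of $x$, proving transitivity.

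No real obstacle is expected here: the whole argument is a careful unwinding of the correspondence between points of $X$ and cosets of $H$, together with the observation that conjugation by $g$ matches fixity of $K$ on $x \cdot g^{-1}$ with containment of $gKg^{-1}$ in $H$. The only thing to be watchful of is a consistent choice of action/conjugation convention, since a sign error there reverses the roles of $g$ and $g^{-1}$.
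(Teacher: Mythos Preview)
Your proof is correct and is precisely the standard argument for this result. Note, however, that the paper does not actually supply its own proof of this lemma: it is simply quoted from Aschbacher's textbook as \cite[(5.21)]{aschbacher3}, so there is nothing in the paper to compare against. What you have written is essentially the proof one finds in Aschbacher (the coset/conjugacy bookkeeping identifying $\Fix(K)$ with $G$-conjugates of $K$ landing in $H$), and your caution about keeping the action and conjugation conventions consistent is exactly the right point to flag.
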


Given $G$ with quaternion Sylow $2$-subgroups we will take $K$ to be a subgroup of $\Galph$ of order $2$, say $K=\{1,g\}$. Then $g$ is the unique involution in a Sylow $2$-subgroup of $\Galph$ and so $K^G\cap H=K^H$. Hence $N_G(K)=C_G(g)$ is transitive on $\Fix(K)=\Fix(g)$, a Baer subplane of $\spaceP$. Then we have the following proposition (which implies Theorem B):

\begin{proposition}\label{P: quaternion}
Suppose that $G$ acts transitively on the set of points of $\spaceP$, a non-Desarguesian projective plane. If $G$ has generalized quaternion Sylow $2$-subgroups then $G$ contains an odd-order subgroup $H$ which is also transitive on the set of points of $\spaceP$.
\end{proposition}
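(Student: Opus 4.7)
The plan is to exploit the factorization $G = O(G) C_G(g)$ together with the transitivity of $C_G(g)$ on $\Fix(g)$. My first step is to establish the geometric identity
\[
\spaceP = O(G)\cdot\Fix(g).
\]
For any $\alpha \in \spaceP$, the stabilizer $G_\alpha$ has odd index $u^4+u^2+1$ in $G$ and thus contains a Sylow $2$-subgroup $P_\alpha$; since generalized quaternion groups have a unique involution, the involution of $P_\alpha$ fixes $\alpha$, and by Sylow conjugacy it has the form $g^t$ for some $t \in G$, so $\alpha \in t\cdot\Fix(g)$. Writing $t = hc$ with $h\in O(G)$ and $c\in C_G(g)$ and noting that $c$ stabilizes $\Fix(g)$ setwise, we conclude $\alpha \in h\cdot\Fix(g)$.

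The second step reduces the proposition to the search for an odd-order subgroup $J \leq C_G(g)$ transitive on $\Fix(g)$: putting $H = O(G)\cdot J$, which is a subgroup of $G$ because $O(G) \lhd G$, we see $H$ has odd order, and for any $\beta \in \Fix(g)$,
\[
H\cdot\beta = O(G)\cdot (J\cdot\beta) = O(G)\cdot\Fix(g) = \spaceP.
\]

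Third, I analyze the three possibilities for $\bar G = G/O(G)$. When $\bar G$ is a generalized quaternion $2$-group, the number of $O(G)$-orbits on $\spaceP$ divides both $|\bar G|$ (since $\bar G$ permutes these orbits transitively) and $|\spaceP| = u^4+u^2+1$, and hence divides $\gcd(|\bar G|, u^4+u^2+1) = 1$; thus $O(G)$ itself is transitive and $H = O(G)$ suffices. In the remaining cases $\bar G \cong Z.A_7$ or $\bar G \cong SL_2(q).D$, $\bar g$ is central in $\bar G$, and my plan is to use the explicit subgroup structure of these two groups together with the fact that $\Fix(g)$ carries the structure of a projective (Baer) subplane of order $u$ to exhibit the required odd-order $J$. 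The main obstacle lies exactly in these last two cases: a transitive action of $Z.A_7$ or $SL_2(q).D$ on a set of odd size need not in general admit any odd-order transitive subgroup (for example, $A_7$ on the $15$ cosets of $PSL_2(7)$ does not), so the crux will be to use the geometric information from the Baer subplane action to rule out such exceptional configurations.
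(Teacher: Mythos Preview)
Your setup is sound: the identity $\spaceP = O(G)\cdot\Fix(g)$ is correct, and the reduction to finding an odd-order $J\leq C_G(g)$ transitive on $\Fix(g)$ is valid. Your treatment of the case where $\bar G$ is a $2$-group is also fine. But the proposal is incomplete exactly where you say it is, and the paper resolves the difficulty by a different route which you have not found.

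The key idea you are missing is to look not at odd-order subgroups of $C_G(g)$ acting on $\Fix(g)$, but at the \emph{kernel} $C^0$ of that action. Since $\Fix(g)$ is itself a projective plane of order $u$, the quotient $C_G(g)/C^0$ acts faithfully and transitively on it. If $\Fix(g)$ is Desarguesian, then either this quotient is soluble or its socle is $PSL_3(u)$; the latter is impossible because $PSL_3(u)$ has Sylow $2$-subgroups that are not sections of a generalized quaternion group. If $\Fix(g)$ is non-Desarguesian, then Theorem~A applies to $C_G(g)/C^0$: its Sylow $2$-subgroups have $2$-rank at most $1$, hence (being quotients of quaternion groups) are cyclic, and the group is soluble. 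Either way $C_G(g)/C^0$ is soluble, so the insoluble part of $C_G(g)$ lies in $C^0\leq G_\alpha$. Passing to $\bar G$, the image of $C^0$ must contain $Z.A_7$, respectively $SL_2(q)$, and hence $G = G_\alpha\cdot O(G)$, respectively $G = G_\alpha\cdot\langle O(G),h\rangle$ for an odd-order $h$ lifting a generator of $D$. The odd-order factor on the right is then transitive.

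Note how this dodges your obstacle: the paper never produces an odd-order subgroup of $C_G(g)$ transitive on $\Fix(g)$. Instead it shows the \emph{point stabilizer} $G_\alpha$ already contains enough of $G$ (namely all of the insoluble content of $\bar G$) that an odd-order complement in $G$ exists. Your $A_7$-on-$15$-points example shows exactly why hunting for $J$ directly is the wrong move; the geometric input you correctly anticipated needing is precisely Theorem~A applied one level down, and it is used to control the kernel rather than to exhibit a transitive subgroup.
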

\begin{proof}
We have listed three possibilities for the structure of $G$. The proposition clearly holds for the first possibility, so exclude this. Take $g\in G$ an involution and let $C^0$ be the kernel of the action of $C_G(g)$ upon $\Fix(g)$.

Suppose first that $\Fix(g)$ is Desarguesian. There are two possibilities for $C_G(g)/C^0$: Either $C_G(g)/C^0$ is soluble or $C_G(g)/C^0$ has socle $PSL_3(u)$ where $u>2$ is the order of $\Fix(g)$. Now, since $u>2$, $PSL_3(u)$ has Sylow $2$-subgroups which are neither cyclic nor dihedral and so they cannot form a section of a quaternion group. Hence we may conclude that $C_G(g)/C^0$ is soluble.

Now suppose that $\Fix(g)$ is not Desarguesian. Then $C_G(g)/C^0$ must have dihedral or cyclic Sylow $2$-subgroups. The former is not possible by Theorem A and the latter implies that $C_G(g)/C^0$ is soluble. Hence this conclusion holds in all cases.

Since $G=O(G)C_G(g)$ and $G$ is insoluble we conclude that $C_G(g)$ is insoluble; in particular $C^0$ is insoluble. Furthermore $C^0/(C^0\cap O(G))$ is isomorphic to a normal subgroup of $\bar{G}$.

If $\bar{G}\cong Z.A_7$ then the only insoluble normal subgroup of $\bar{G}$ is $\bar{G}$. Hence $C^0/(C^0\cap O(G))=\bar{G}$. Thus $G=C^0 O(G)=G_\alpha O(G)$ and so $O(G)$ is transitive on the set of points of $\spaceP$.

If $\bar{G}\cong SL_2(q).D$ then an insoluble normal subgroup of $\bar{G}$ must contain $SL_2(q)$. Thus $C^0/(C^0\cap O(G))\geq SL_2(q)$. Now pick an element, $h$, in $G$ whose image in $\bar{G}/SL_2(q)$ has order divisible by $D$; it is clear that such an element can be chosen to have odd order. Then $G=C^0 \langle O(G), h \rangle = G_\alpha \langle O(G), h \rangle$ and so $\langle O(G), h \rangle$ is transitive on the set of points of $\spaceP$.
\end{proof}

\bibliographystyle{amsalpha}
\bibliography{paper}

\def\cprime{$'$}
\providecommand{\bysame}{\leavevmode\hbox to3em{\hrulefill}\thinspace}
\providecommand{\MR}{\relax\ifhmode\unskip\space\fi MR }
% \MRhref is called by the amsart/book/proc definition of \MR.
\providecommand{\MRhref}[2]{%
  \href{http://www.ams.org/mathscinet-getitem?mr=#1}{#2}
}
\providecommand{\href}[2]{#2}
\begin{thebibliography}{BGL77}

\bibitem[Asc84]{aschbacher2}
M.~Aschbacher, \emph{On the maximal subgroups of the finite classical groups},
  Invent. Math. \textbf{76} (1984), no.~3, 469--514.

\bibitem[Asc00]{aschbacher3}
Michael Aschbacher, \emph{Finite group theory}, Cambridge studies in advanced
  mathematics, no.~10, Cambridge University Press, 2000.

\bibitem[BGL77]{bgl}
N.~Burgoyne, R.~Griess, and R.~Lyons, \emph{Maximal subgroups and automorphisms
  of {C}hevalley groups}, Pacific J. Math. \textbf{71} (1977), no.~2, 365--403.

\bibitem[Dem97]{dembov}
P.~Dembowski, \emph{Finite geometries}, Springer-Verlag, 1997.

\bibitem[Gil06]{gill4}
Nick Gill, \emph{Nilpotent singer groups}, Electronic Journal of Combinatorics
  \textbf{13} (2006), R94.

\bibitem[Gil07]{gill2}
\bysame, \emph{Transitive projective planes}, Adv. Geom. \textbf{7} (2007),
  475--528.

\bibitem[Gor68]{gorenstein3}
Daniel Gorenstein, \emph{Finite groups}, Harper \& Row Publishers, New York,
  1968.

\bibitem[Kan87]{kantor}
W.~Kantor, \emph{Primitive permutation groups of odd degree, and an application
  to finite projective planes}, J. Algebra \textbf{106} (1987), 15--45.

\bibitem[KL90]{kl}
P.~Kleidman and M.~Liebeck, \emph{The subgroup structure of the finite simple
  groups}, London Mathematical Society Lecture Note Series, vol. 129, Cambridge
  University Press, Cambridge, 1990.

\bibitem[Lie85]{liebeck3}
Martin~W. Liebeck, \emph{On the orders of maximal subgroups of the finite
  classical groups}, Proc. London Math. Soc. (3) \textbf{50} (1985), no.~3,
  426--446.

\bibitem[OW59]{oswa}
T.G. Ostrom and A.~Wagner, \emph{On projective and affine planes with
  transitive collineation groups}, Math. Z. \textbf{71} (1959), 186--199.

\bibitem[PS80]{ps}
Cheryl~E. Praeger and Jan Saxl, \emph{On the orders of primitive permutation
  groups}, Bull. London Math. Soc. \textbf{12} (1980), no.~4, 303--307.

\bibitem[Wag59]{wagner}
A.~Wagner, \emph{On perspectivities of finite projective planes}, Math. Z.
  \textbf{71} (1959), 113--123.

\bibitem[WG64]{gw1}
John~H. Walter and D.~Gorenstein, \emph{The characterisation of finite groups
  with dihedral sylow $2$-subgroups}, J. Algebra \textbf{2} (1964), 85--151,
  218--270, 354--393.

\end{thebibliography}

\end{document}